\definecolor{darkred}{rgb}{.7,0,0}
\definecolor{RED}{rgb}{1,0,0}
\definecolor{green}{rgb}{0,0.7,0}
\newcommand\revA[1]{\textcolor{black}{#1}}
\newcommand\revB[1]{\textcolor{black}{#1}}
\newcommand\revC[1]{\textcolor{black}{#1}}
\newcommand\revD[1]{\textcolor{black}{#1}}
\newtheorem{theorem}{Theorem}[section]
\newtheorem{lemma}[theorem]{Lemma}
\newtheorem{proposition}[theorem]{Proposition}
\newtheorem{remark}[theorem]{Remark}
\newtheorem{assumption}[theorem]{Assumption}
\newcommand{\throwout}[1]{}
\newcommand{\qc}{\accentset{\circ}{q}}
\newcommand{\uuc}{\accentset{\circ}{\uu}}
\newcommand{\R}{\mathbb{R}}
\newcommand{\N}{\mathbb{N}}
\newcommand{\Om}{\Omega}
\newcommand{\IOprod}[1]{\left( #1 \right)_{I\times\Omega}}
\newcommand{\Iprod}[1]{\left( #1 \right)_{I}}
\newcommand{\IOpair}[1]{\left\langle #1 \right\rangle_{I\times\Omega}}
\newcommand{\Oprod}[1]{\left( #1 \right)_{\Omega}}
\newcommand{\Gprodj}[2]{\left( #1 \right)_{\Gamma_{#2}}}
\newcommand{\Gprod}[1]{\left( #1 \right)_{\partial \Omega}}
\newcommand{\dOprod}[1]{\left( #1 \right)_{\partial \Omega}}
\newcommand{\Opair}[1]{\left\langle #1 \right\rangle_{\Omega}}
\newcommand{\IGprodj}[2]{\left( #2 \right)_{I\times\Gamma_{#1}}}
\newcommand{\du}{\delta \uu}
\newcommand{\MathSet}[1]{\left\lbrace \, #1 \, \right\rbrace}
\newcommand{\dO}{\partial \Omega}
\newcommand{\Vt}{\widetilde \V}
\newcommand{\Ht}{\widetilde \Hspace}
\DeclareMathOperator{\ext}{ext}
\DeclareMathOperator{\supp}{supp}
\DeclareMathOperator{\tr}{tr}
\let\div\relax
\DeclareMathOperator{\div}{div}
\DeclareMathOperator{\interior}{int}
\newcommand{\uu}{\mathbf{u}}
\newcommand{\nn}{\mathbf{n}}
\newcommand{\xx}{\mathbf{x}}
\newcommand{\ttau}{\boldsymbol{\tau}}
\newcommand{\ppsi}{{\boldsymbol{\psi}}}
\newcommand{\zz}{\mathbf{z}}
\newcommand{\vv}{\mathbf{v}}
\newcommand{\ff}{\mathbf{f}}
\newcommand{\ww}{\mathbf{w}}
\newcommand{\yy}{\mathbf{y}}
\newcommand{\ee}{\mathbf{e}}
\newcommand{\EE}{\mathbf{E}}
\renewcommand{\gg}{\mathbf{g}}
\newcommand{\ubar}{\bar \uu}
\newcommand{\zbar}{\bar \zz}
\newcommand{\qbar}{\bar q}
\newcommand{\dq}{\delta q}
\newcommand{\su}{\textit{\S}\uu}
\renewcommand{\rq}{\varrho q}
\newcommand{\ru}{\varrho \uu}
\renewcommand{\textin}{\text{ in }}
\newcommand{\texton}{\text{ on }}
\newcommand{\dt}{\mathrm{d}t}
\newcommand{\rhq}{\varrho q}
\newcommand{\rhu}{\varrho \uu}
\newcommand{\ltwoltwonorm}[1]{\left\| #1 \right\|_{L^2(I;L^2(\Omega))}}
\newcommand{\W}{\mathbb{W}}
\newcommand{\VV}{\mathbb{V}}
\newcommand{\cimbed}{\overset{c}{\hookrightarrow}}
\newcommand{\En}{\mathrm{E}_{\partial_n}}
\newcommand{\V}{\mathbf{V}}
\newcommand{\Hspace}{\mathbf{H}}
\newcommand{\alev}{\text{ a.e. }}
\newcommand{\GNi}{\Gamma_{N,i}}
\newcommand{\oo}{\mathbf{0}}
\newcommand{\Qad}{\mathcal{Q}_{ad}}
\newcommand{\Qbox}{\underline{\overline{Q}}^b_a}
\newcommand{\Hhalfoodual}{(H^{\frac12}_{00}(\Gamma_{N,i}))^*}
\renewcommand{\textforall}{\text{for all }}
\begin{document}
\author{Boris Vexler}\address{Chair of Optimal Control, Technical University of Munich,
  School of Computation Information and Technology,
Department of Mathematics , Boltzmannstra{\ss}e 3, 85748 Garching b. Munich, Germany
(vexler@tum.de).}
\author{Jakob Wagner}\address{Chair of Optimal Control, Technical University of Munich,
  School of Computation Information and Technology,
Department of Mathematics , Boltzmannstra{\ss}e 3, 85748 Garching b. Munich, Germany
(wagnerja@cit.tum.de) ORCID: \url{https://orcid.org/0000-0001-8510-9790}.}

\begin{abstract}
    In this work we study an optimal control problem subject to the instationary Navier-Stokes equations,
    where the control enters via an inhomogeneous Neumann/Do-Nothing boundary condition.
    Despite the Navier-Stokes equations with these boundary conditions not being well-posed for 
    large times and/or data, we obtain wellposedness of the optimal control problem by choosing a
    proper tracking type term.
    \revD{Moreover, we derive first and second order optimality conditions.}
    In order to discuss the regularity of the optimal control, state and adjoint state, 
    we present new results on $L^2(I;H^2(\Om))$ regularity of solutions to a Stokes problem
    with mixed inhomogeneous boundary conditions.
\end{abstract}

\keywords{Navier-Stokes, transient, instationary, finite elements, discontinuous Galerkin, error estimates,
best approximation, fully discrete}
\title{Optimal Control of the Navier-Stokes equations via Pressure Boundary Conditions}

\subjclass{%
  35Q30, 
  76D05,
  \revA{49J20,
  49K20}
}

\maketitle

\section{Introduction}
In this work, we aim to consider an optimal control problem for the Navier-Stokes equations of the form
\begin{align}
   \tag{$\mathcal{P}$}\label{eq:minProb_1}
   \min_{(\uu,p,q)} & \qquad  J(\uu,\revA{\uu_d},p,q)
   \\
   \nonumber
   \text{s.t.} & \qquad
   \begin{aligned}[t]
     \partial_t \uu -  \Delta \uu + (\uu\cdot \nabla) \uu + \nabla p &= \oo && \text{in } I\times \Omega,\\
     \nabla \cdot \uu &= 0 && \text{in } I \times \Omega,\\
     \uu &= \oo && \text{on } I \times \Gamma_D,\\
     g(\uu,p) &=q_i(t)\nn && \text{on } I \times \GNi, \quad i=1,...,L,\\
     \uu(0) &=\uu_0 && \text{in } \Omega.
    \end{aligned}
\end{align}
Our focus lies on finding formulations of the objective function $J$ and 
boundary conditions $g$ such that this becomes a well defined and meaningful problem.
In the argument of $J$, the term $\uu_d$ denotes a target velocity field, obtained, e.g.,
from measurements.
In the above formulation, we consider $\Omega \subset \R^2$ some given domain, where its boundary
$\partial \Omega =:\Gamma= \Gamma_D \cup \bigcup_{i=1}^L \GNi$ is partitioned into a wall part of the boundary
$\Gamma_D$, where we prescribe no-slip (Dirichlet) boundary conditions,
and $L$
disjoint boundary segments $\GNi$, \revB{on which} we prescribe some boundary conditions
\revD{involving} the velocity $\uu$ and the pressure $p$, \revD{and the outer normal vector $\nn$}.
As fluid may enter or leave $\Omega$ through the boundary segments $\GNi$, they are often 
referred to as \textit{open boundaries}.
\revB{The equations are considered on the time interval $I$ and the initial condition
$\uu_0$ is supposed to be given.}
We \revA{wish to} control the Navier-Stokes equations by the right hand sides $q_i$ of these 
boundary conditions.
It is quite common, that the boundary segments $\GNi$ stem from truncating a larger 
domain, in order to reduce the total number of degrees of freedom of a full model, and
to facilitate numerical computations.
One important application, in which such domains arise, are cardiovascular CFD simulations, 
where $\Om$ contains one or more bifurcations of a bloodvessel. 
These simulated data can be used by clinicians to diagnose the severity of 
cardiovascular diseases, which makes this field of research highly impactful. We refer the reader to 
the survey \cite{nolte_inverse_2022}, where a vast collection of relevant problems and solution 
approaches in this context is presented. In order to obtain CFD data of the bloodflow in a truncated domain,
proper values for the boundary conditions at the open boundaries have to 
be specified. As these are generally unknown, optimal control problems like \eqref{eq:minProb_1} 
facilitate detailled CFD simulations, provided that a patient specific scan of the bloodvessel 
geometry $\Om$ and space time velocity data $\uu_d$ are available. 
Both can be obtained non-invasively by the use of 4D MRI imaging, 
see, e.g., \cite{stankovic_4d_2014} for an introduction to this method.
There, the authors give references on how other flow quantities like pressure and wall sheer stress 
can be indirectly deduced from the measured data.
Solving an optimal control formulation like \eqref{eq:minProb_1} makes all these quantities available 
immediately without further need of postprocessing the MRI data.
Such time dependent boundary control problems are among the \revD{methods} presented
in \cite{nolte_inverse_2022}
and in Section 6.5 therein, the authors remark, that a wellposedness analysis of such formulations
is an open question. The techniques and results presented in this work contribute 
towards answering this question.

\revD{Throughout this work, we shall assume, that the controls $q_i$ are 
  only time dependent and constant in space.
This structure is partially motivated by the Poiseuille flow in a rectangular channel, 
where the inhomogeneous Do-Nothing boundary data are indeed constant in space. 
At the same time, we will see, that this allows us to circumvent some tedious technicalities
in the analysis of the problem under consideration, while still keeping the main difficulties
to solve. Lastly, from a numerical point of view, the pure time dependence greatly reduces the
number of degrees of freedom of the discretized control. Let us also specifically mention
\cite[Section 2]{nolte_inverse_2022} and \cite{quarteroni_coupling_2001}, where 
the time dependent Navier-Stokes equations are bi-directionally coupled to an ODE model on each
open boundary segment. These ODE models, also called Windkessel models, describe the lumped 
impact, e.g., resistance and capacity, of the truncated smaller bloodvessels, on the main flow 
in $\Omega$. As each ODE component only carries time-dependent information, it is necessary to 
prescribe a spatial profile, e.g., constant, to couple the models. In turn, only the parameters 
of the Windkessel models have to be optimized, even further reducing the number of degrees of 
freedom. We hope that the analysis of \eqref{eq:minProb_1} with purely time dependent controls,
may in the future contribute towards the analysis of optimal control problems subject to such
coupled systems.}

Not only the question of specifying the correct data at the open boundaries, but also the choice 
of boundary condition itself is a very active reasearch topic, 
and a large number of possible boundary conditions is being discussed.
At the present point in time, the following conditions seem to be the most used in the literature
%
\begin{align}
   & \text{Do-Nothing Boundary Condition:} & 
    p\nn -  \nabla \uu \cdot \nn &= q_i \nn \label{eq:DN}\tag{DN}\\
   & \text{Total Pressure Boundary Condition:} & 
    p\nn -  \nabla \uu \cdot \nn + \frac{1}{2} |\uu|^2 \nn &= q_i \nn
   \label{eq:TP}\tag{TP}\\
   & \text{Advective Boundary Condition:} & 
    p\nn -  \nabla \uu \cdot \nn + \frac{1}{2} (\uu\cdot \nn) \uu &= q_i \nn \label{eq:AD}\tag{AD}\\
   & \text{Directional Do-Nothing Boundary Condition:} &
    p \nn - \nabla \uu \cdot \nn + \frac{1}{2} [\uu \cdot \nn]^- \uu &= q_i \nn \label{eq:DDN}\tag{DDN}
\end{align}
\revB{where in the last condition, $[\, \cdot \, ]^- = \min\{\cdot,0\}$ denotes the negative part.}
The \eqref{eq:DN} and \eqref{eq:TP} conditions have been discussed in \cite{heywood_artificial_1996},
the \eqref{eq:DDN} conditions were
introduced in \cite{braack_directional_2014}, and for the \eqref{eq:AD} boundary conditions,
we refer the reader to
\cite{simon_convective_2022}.
It is well known, that while the instationary Stokes equations with the \eqref{eq:DN}
boundary condition are well posed,
the same is not true anymore for the Navier-Stokes equations. Only for short times or small data, the existence
of a weak solution is known. The reason for this is the fact, that the term $\Oprod{\uu\cdot \nabla \uu,\uu}$
in the tested weak formulation collapses to a boundary integral term, which a priori cannot be bounded.
Thus, it is possible that an infinite amount of inflow through $\Gamma_N$ occurs, making the total energy 
of the system unbounded.
The modifications present in the \eqref{eq:TP}, \eqref{eq:AD} and \eqref{eq:DDN} boundary conditions aim to cancel out this boundary term,
in order to retrieve existence of weak solutions globally in time for data of any size.
In the literature, there are also other strategies to overcome the difficulty in applying the \eqref{eq:DN} boundary conditions.
In \cite{kracmar_modeling_2018} the authors search for solutions to the Navier-Stokes equations with
\eqref{eq:DN} boundary conditions in a subset of the solution space, such that the inflow remains bounded.
This allows them to consider the equations in form of a variational inequality. 
In \cite{beirao_da_veiga_navierstokes_2024} the authors explore regularized \eqref{eq:DDN} conditions for the stationary 
Navier-Stokes equations.
While none of the above boundary conditions carry a physical meaning at parts of the boundary, that are
introduced by truncating a larger domain, the \eqref{eq:DN} boundary conditions are at least valid 
for Poiseuille flow in a rectangular channel. 
Moreover, while the additional terms in the \eqref{eq:TP}, \eqref{eq:AD} or \eqref{eq:DDN} boundary condition allow for a canceling of terms 
in the state equation, the same is not true anymore in the adjoint equation that arises as part of the 
discussion of the optimal control problem.
Lastly, choosing the same type of boundary condition at all \revB{open boundaries} eliminates the need 
to distinguish inflow and outflow boundaries a priori, as all parts of the boundary in question
are treated the same.
Thus, our aim is to work with the \eqref{eq:DN} boundary conditions and to devise a strategy to overcome the 
difficulties associated with that choice mentioned above.
Since the Navier-Stokes equations are embedded into the optimal control problem, we do not 
require solvability of the equations for arbitrary data, but get existence of solutions along 
a minimizing sequence.
Additionally, we have the added freedom of choosing the norm in the tracking type term in the cost functional,
and thus the a priori known
smoothness of the solutions in the minimizing sequence.
This strategy of choosing a stronger norm in the tracking term was recently employed in
\cite{casas_analysis_2016}, in order to tackle a distributed control
problem of the 3D Navier-Stokes equations, \revD{with homogeneous Dirichlet boundary conditions}.
There the stronger norm guaranteed uniqueness
of solutions, whereas in our case, we will use it to obtain existence of solutions.
The origin of such approaches can be traced back to as early as \cite{gunzburger_analysis_1991},
where a Dirichlet control problem of the stationary Navier-Stokes was considered.
\revB{While there is a lot of literature on distributed control 
  (e.g., \cite{casas_discontinuous_2012,casas_velocity_2024})
  or boundary control (e.g., \cite{hinze_second_2004}) of the Navier-Stokes equations with pure 
Dirichlet boundary conditions, much less is available for mixed boundary conditions.}
The authors of \cite{guerra_existence_2015,tiago_velocity_2017,mondaini_boundary_2018} 
use the standard $L^2$ tracking norm in a Dirichlet control problem for the stationary Navier-Stokes
equations, but additionally admit a homogeneous \eqref{eq:DN} condition on a subset of the boundary.
This forces them to impose a constraint on the size of the controls,
in order to get solvability of the state equation.
In \cite{guerra_optimal_2014,guerra_optimal_2024} the authors have extended these approaches to the
instationary Navier-Stokes equations.
\revB{In the very recent work \cite{rathore_projection_based_2024}, the authors study optimal control problems
  which are very similar to ours, but consider the 3D instationary case and admit fully space-time dependent 
  controls on the right hand side of the \eqref{eq:DN} conditions. While they do not present theoretical 
  results concerning the optimal control problems, they implement model order reduction techniques
  to overcome the large computational effort necessary to solve such problems.
  \\
}
The mixed boundary conditions in the state equation of \eqref{eq:minProb_1} require careful analysis
of the available regularity of the primal and adjoint states.
\revB{The work \cite{rosch_regularity_2005} highlights, that even for pure Dirichlet conditions,
the adjoint equation needs special attention.}
It is known that the available regularity on piecewise smooth domains depends crucially on the angle 
between boundary segments. Moreover, in certain special cases additional compatibility conditions 
between the Neumann and Dirichlet data need to be imposed.
While for the Poisson problem results for mixed inhomogeneous boundary conditions and a piecewise smooth
boundary can be found in \cite{banasiak_mixed_1989}, for the parabolic case there seems to be 
much less known.
In \cite{denk_optimal_2007} the authors discuss maximal parabolic regularity for parabolic problems 
with inhomogeneous data in $C^2$ domains  for either inhomogeneous Neumann or inhomogeneous Dirichlet 
boundary conditions. The line of research is continued in \cite{lindemulder_maximal_2020}, discussing the 
regularity in weighted spaces, indicating the relevance of this area of research.
In \cite{hieber_quasilinear_2008,hallerdintelmann_maximal_2009} parabolic systems with mixed boundary
conditions in nonsmooth domains are discussed, under the assumption of homogeneous Neumann data.
In \cite{savare_parabolic_1997} parabolic problems with mixed inhomogeneous data 
on $C^{1,1}$ domains are discussed, where an emphasis is put on the setting,
that the type of boundary condition at any point $\xx$ is allowed to change over time.
This line of research is carried on in \cite{choi_optimal_2022}, which serves as a great collection of 
references in the topic. In their paper, the authors work with homogeneous Dirichlet and very special Neumann boundary conditions, for which they show, that the standard $L^2(I;H^1(\Omega))$ and $L^\infty(I;L^2(\Omega))$
regularities hold.
The authors of \cite{kim_existence_2018} also fall in that line of research, studying mixed inhomogeneous
parabolic problems on time dependent domains. However, they consider $C^2$ spatial domains and work with a 
weaker notion of solutions to the PDE.
In \cite[Chapter 4, section 2.2 \& 2.5 ]{lions_nonhomogeneous_1972_vol2} instationary trace results 
for simultaneous Neumann and Dirichlet data on the whole boundary are presented,
which hold on $C^\infty$ domains.
Moreover, the authors discuss compatibility conditions with the initial condition.
For results on the Stokes and Navier-Stokes equations, the literature is even more scarce in the general case.
Results for the stationary problems in 3D with either pure \eqref{eq:DN} or mixed boundary conditions can be found in 
\cite{mitrea_stokes_2011,brown_mixed_2009,cialdea_mixed_2009}.
The well known monograph \cite{mazia_elliptic_2010} present results for the stationary Stokes
and Navier-Stokes problems
in a 3D polyhedron with mixed inhomogeneous boundary conditions, howewer exclude the case of right angles 
between faces, see their Lemma 9.2.4.
The authors of \cite{kozlov_neumann_2023} consider a pure Neumann instationary Stokes problem in a 
general cone in weighted spaces.
Local existence in time of solutions to the instationary Navier-Stokes equations with mixed homogeneous boundary 
conditions in 3D polyhedral domains is shown in \cite{benes_mixed_2011}.
In \cite{kucera_basic_2009} instationary
2D and 3D Stokes and Navier-Stokes problems are considered with mixed inhomogeneous boundary data.
A slightly different concept of weak solutions, possessing more time regularity, is introduced and 
unique solvability to perturbed equations in the vicinity of an (assumed to be existing) solution is shown.
In \cite{kim_non_stationary_2017}, for 2D and 3D domains and a variety of boundary 
conditions, existence of a solution to the instationary Navier-Stokes equations for small enough data is proven.
In order to work with their concept of more regular weak solutions, the authors need to assume improved 
regularity of the data.
For domains with piecewise smooth boundaries, where boundary segments meet in a right angle, 
\cite{benes_solutions_2012,benes_solutions_2016} show $H^2$ regularity results of the instationary 
Stokes and Navier-Stokes equations in 2D and 3D. Here the results for Navier-Stokes hold in a neighborhood
of an assumed to exist solution. 
Local existence in time is shown \revB{in} \cite{benes_local_2021} for the case that the data have slightly
more than
standard regularity, improving the results of \cite{kucera_local_1998} where much more regularity of the data 
was needed.
Let us lastly comment on literature where the Navier-Stokes equations with mixed boundary conditions arise
as a subproblem.
The authors of \cite{lasiecka_boundary_2018} consider a 2D stationary FSI problem in a
rectangle. The appendix contains a detailled explanation on how to treat corner 
singularities for the Stokes problem. The same line of research is continued in 
\cite{hintermuller_differentiability_2023}.
On 2D domains with piecewise smooth boundaries joined by right angles,
\cite{benes_strong_2011,arndt_existence_2020} show existence 
of solutions to a Boussinesq system either locally in time or for small data.
The authors of \cite{ceretani_boussinesq_2019} consider a stationary Boussinesq system
and augment it with either the ideas of \cite{kracmar_modeling_2018} or the \eqref{eq:DDN} conditions.
The article is a great survey \revB{over different approaches to treat open boundaries.}

The remainder of the paper is structured as follows. In \Cref{sect:preliminary} we introduce some notations and recollect preliminary results for the involved function spaces.
Following up, in \Cref{sect:state}, we discuss the instationary 
Navier-Stokes equations with mixed boundary conditions,
which pose the state equations of our optimal control problem.
We carefully examine the existence\revD{/non-existence}, uniqueness and regularity of solutions, as well as introduce 
a suitable reformulation of the state equation for the specific case of purely time-dependent data.
\Cref{sect:opt_cont} then covers the optimal control problem.
Our main result is \Cref{thm:wellposedness_ocp}, in which we show that, despite the state equation not 
being solvable for arbitrary data, our formulation leads to a well posed optimal control problem.
We will turn towards first order optimality conditions involving the adjoint state and answer the 
question of regularity of the primal and dual variables.
While the regularity of the state follows from the discussion in \Cref{sect:state}, for the discussion of the 
adjoint state, we prove a general result on $L^2(I;H^2(\Om)) \cap H^1(I;L^2(\Om))$ regularity
of solutions to the Stokes equations with mixed inhomogeneous data. This result is presented in 
\Cref{thm:stokes_inhom_neumann_data} and is of significance also outside the context of our specific optimal 
control problem. To the best of the authors knowledge, there are no results in the literature on 
compatibility conditions for the inhomogeneous mixed boundary data of instationary Stokes problems 
in nonsmooth domains.
We conclude the section with a discussion of second order conditions and proceed to 
\Cref{sect:num_exp} in which we show numerical results.

\section{Preliminary}\label{sect:preliminary}

Throughout this work, all vectorial quantities will be indicated by boldface letters.
For $1 \le p\le \infty$ and $k \in \mathbb N$,
we denote by $L^p(\Omega)$, $W^{k,p}(\Omega)$, $H^k(\Omega)$ and $H^1_0(\Omega)$
the usual Lebesgue and Sobolev spaces.
The inner product on $L^2(\Omega)$ will be denoted by $\Oprod{\cdot , \cdot}$.
For $s \in \R\backslash \N$, $s>0$ \revB{and $1 \le p < \infty$,} the fractional order Sobolev(-Slobodeckij)
space $W^{s,p}(\Omega)$ is defined, see, e.g., \cite{demengel_functional_2012}, as
\begin{equation*}
  W^{s,p}(\Omega) := \MathSet{ v \in W^{\lfloor s\rfloor,p}(\Omega) : 
  \sum_{|\alpha|=\lfloor s \rfloor} \iint_{\Omega \times \Omega} 
  \dfrac{|D^\alpha v(\xx) - D^\alpha v(\yy)|^p}{|\xx-\yy|^{(s-\lfloor s\rfloor)p+2}}
  dx dy < + \infty}.
\end{equation*}
In case $p=2$ we again use the notation $H^s(\Omega)$. Note that in this case $H^s(\Omega)$ 
can equivalently be obtained via real or complex
interpolation of the integer degree spaces $H^k(\Omega)$.
This is due to the fact, that in the Hilbert space setting, all resulting Bessel potential spaces
$H^s_2(\Omega)$, Besov spaces $B^s_{2,2}(\Omega)$ and Sobolev-(Slobodeckij) spaces $H^s(\Omega)$ coincide,
see \cite[pp. 12,39]{triebel_theory_1992}.
We denote for a Banach space $X$, the Bochner spaces of $p-$integrable, $X-$valued functions on $I$ by 
$L^p(I;X)$. Accordingly, we use the notations 
$W^{k,p}(I;X)$ and $H^k(I;X)$ for the \revD{Bochner type} spaces, where all time derivatives
up to order $k$ are also $p$- (respectively $2$-) integrable.
Let us recall some important relations between spaces, that can be obtained by interpolation.
  We denote the complex interpolation functor for two compatible Banach spaces $X$,$Y$ and parameter 
  $\theta \in [0,1]$ by $[X,Y]_\theta$. There holds an estimate
  \begin{equation*}
    \|x\|_{[X,Y]_\theta} \le \|x\|_X^{1-\theta} \|x\|_Y^\theta \quad \textforall x \in X \cap Y.
  \end{equation*}
  If interpolation between spaces is applied repeatedly, the reiteration theorem
  \cite[Chapter 1, Theorem 2.3]{lions_nonhomogeneous_1972_vol1} gives that   
  \begin{equation}\label{eq:reiteration_theorem}
     [[X,Y]_{\theta_1},[X,Y]_{\theta_2}]_\sigma 
     = [X,Y]_{\theta_1 \cdot (1-\sigma) + \theta_2 \sigma} \quad \text{ for } 
     \theta_1,\theta_2, \sigma \in (0,1).
  \end{equation}
  \revB{Following \cite{lions_nonhomogeneous_1972_vol1}, we define for any $s \in \R$, $s > 0$,
  and a Hilbert space $X$, the space $H^s(I;X) := [H^m(I;X),L^2(I;X)]_\theta$, where $(1-\theta)m = s$.}
  In order to deal with Bochner integrable functions, we recall the
  intermediate derivative theorem, which states that 
  \begin{equation}\label{eq:intermediate_derivative_theorem}
     u \in L^2(I;X) \cap H^s(I;Y) \Rightarrow
     u \in H^{s\theta}(I;[X,Y]_\theta), \text{ for all } \theta \in (0,1),
  \end{equation}
  and can be found for integer order derivatives in 
  \cite[Chapter 1, Theorem 2.3]{lions_nonhomogeneous_1972_vol1}, and for fractional derivatives in 
  \cite[Chapter 1, Theorem 4.1]{lions_nonhomogeneous_1972_vol1}.
  Secondly, as taking derivatives w.r.t. time and space commutes, due to 
  \cite[Chapter 1, Theorem 13.1]{lions_nonhomogeneous_1972_vol1}, there holds 
  for any $s,\sigma \ge 0$ and $\theta \in (0,1)$:
  \begin{equation}\label{eq:space_time_intersection_interpolation}
    [L^2(I;H^s(\Om)) \cap H^\sigma(I;L^2(\Om)),L^2(I;L^2(\Om))]_\theta
    = L^2(I;H^{s(1-\theta)}(\Om)) \cap H^{\sigma \theta}(I;L^2(\Om)).
  \end{equation}
  Lastly, when $X,Y$ are Hilbert spaces, from \cite[Chapter 1, Equation 9.24]{lions_nonhomogeneous_1972_vol1},
  we get the respresentation
  \begin{equation*}
     [H^{s_1}(I;X),H^{s_2}(I;Y)]_\theta = H^{(1-\theta) s_1 + \theta s_2 }(I;[X,Y]_\theta)
     \quad \text{ for } s_1,s_2 \ge 0, \theta \in (0,1),
  \end{equation*}
  \revB{which by setting $s_1=0$ can be seen as a generalization of \eqref{eq:intermediate_derivative_theorem}.}
For $X$ being any function space over $\Omega$, we denote by $X^*$ its topological dual space,
and abbreviate the duality pairing by $\Opair{\cdot , \cdot}$. We will also use the notation
$H^1_0(\Omega)^* = H^{-1}(\Omega)$.
We follow the setting of \cite{benes_solutions_2016} regarding the assumptions on the spatial domain
$\Omega$:
\begin{assumption}\label{ass:domain}
  Throughout this work, let $\Omega \subset \R^2$ be a Lipschitz domain, satisfying the following
  conditions:
  \begin{enumerate}
    \item the boundary $\partial \Omega$ is partitioned into Dirichlet and Neumann boundaries
      $\partial \Omega = \bar \Gamma_D \cup \bar \Gamma_N$ such that $\Gamma_D \cap \Gamma_N = \emptyset$.
    \item the connected components of $\Gamma_D$ are $C^2$,
    \item the connected components of $\Gamma_N$ are line segments,
    \item at the points of transition from $\Gamma_D$ to $\Gamma_N$, the boundary segments form an angle of 
      $\frac{\pi}{2}$.
  \end{enumerate}
\end{assumption}
\begin{figure}[H]
  \centering
  \begin{tikzpicture}
  \node (image) at (.4,-.3) {\includegraphics[width=2.2cm]{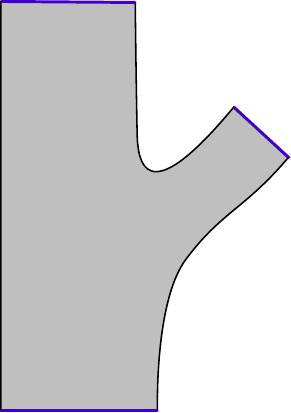}};
  \node at (-.1,-.2) {$\Omega$};
  \node[blue] at (-.1,-2.1) {\small $\Gamma_{N}$};
  \node[blue] at (-.1,1.5) {\small$\Gamma_{N}$};
  \node[blue] at (1.5,0.5) {\small $\Gamma_{N}$};
  \node at (-1.1,-.2) {\small $\Gamma_{D}$};
  \node at (1.1,-.9) {\small $\Gamma_{D}$};
  \node at (.75,.7) {\small $\Gamma_{D}$};
  \end{tikzpicture}
  \caption{Domain $\Omega$ satisfying the \Cref{ass:domain}}
  \label{fig:domain_assumption}
\end{figure}
Note, that in \cite{benes_solutions_2016}, $C^\infty$ of the connected components of $\Gamma_D$ is assumed,
while $C^2$ regularity is indeed enough, compare also the setting of \cite{benes_local_2021}.
Below, we use the spaces:
\begin{equation}\label{eq:spaces1}
  \V := \left\{ \uu \in H^1(\Omega)^2 : \nabla \cdot \uu = 0 \land \uu|_{\Gamma_D}=\oo \right\}
  \quad \text{and} \quad
  \Hspace := \left\{ \uu \in L^2(\Omega)^2 : \nabla \cdot \uu = 0 \land \uu\cdot \nn|_{\Gamma_D} = 0 \right\}.
\end{equation}
By $\nn$ we denote the outer normal vector to $\dO$, and understand $\uu \cdot \nn$ as the normal trace,
which is well defined for solenoidal $L^2(\Omega)$ vector fields,
see \cite[Lemma II.1.2.2]{sohr_navier_stokes_2001}.
In case $\revB{\Gamma_D = \dO}$, these spaces coincide with the ones used in standard literature,
e.g., \cite{Temam1977}.
   As we will also need the space of divergence free functions with zero trace on the whole boundary
   in our analysis, we will use the following notation
   \begin{equation}\label{eq:def_Vcirc}
      \mathring{\V} := \{ \uu \in H^1_0(\Omega)^2 : \nabla \cdot \uu = 0\}.
   \end{equation}
Note however, that in the pure Dirichlet case, there exist alternative, equivalent characterizations
of these spaces, defined via closures. For our mixed boundary condition setting, these spaces would 
be the following:
\begin{equation}\label{eq:spaces2}
  \Vt := \overline{\mathcal V}^{H^1(\Omega)}, \quad
  \Ht := \overline{\mathcal V}^{L^2(\Omega)} \quad \text{where} \quad
  \mathcal V := \left\lbrace \uu \in C^\infty(\overline{\Omega})^2 :
   \nabla \cdot \uu = 0 \land
   \supp \uu \cap \bar \Gamma_D = \emptyset \right\rbrace.
\end{equation}
While in this work, we choose to work with the spaces $\V$ and $\Hspace$, it is a natural question to ask, 
when these descriptions coincide. 
Most of the standard literature, \cite{Temam1977,sohr_navier_stokes_2001,john_finite_2016,
girault_finite_1986,boyer_mathematical_2013} deals with the pure Dirichlet case, where it is well 
established, that for Lipschitz domains the two characterizations are equivalent.
In \cite{mitrea_stokes_2011,mitrea_boundary_2012}
problems with pure Neumann conditions are studied, and it is shown, that $\Hspace = \Ht$.
Their argument uses extension results of solenoidal vector fields, derived in \cite{kato_extension_2000},
but is not directly applicable to the case where additional Dirichlet conditions are present.
Regarding mixed problems, \cite{lanzendorfer_pressure_2011,braack_directional_2014,nguyen_boundary_2015} 
work with spaces obtained by restricting the containing Sobolev spaces, while
\cite{benes_local_2021,benes_solutions_2016,benes_mixed_2011,benes_strong_2011,benes_solutions_2012,
kim_equations_2021,simon_convective_2022} define their spaces via closures of $C^\infty$ functions.
Let us lastly mention, that this question regarding the characterization of the proper solenoidal space
is closely related to the existence of a Helmholtz decomposition. The work \cite{amrouche_decomposition_1994}
contains results on various Helmholtz decompositions of spaces for the Stokes problem subject to Dirichlet
boundary conditions.
In \cite{amrouche_stokes_2011}, the authors give a constructive approach to the proper Helmholtz
decompositions. Such a Helmholtz decomposition, presented in \cite[Lemma II.2]{nguyen_boundary_2015},
gives us a characterization of the space $\Hspace^\perp$, it holds
\begin{equation}\label{eq:Hperp}
  L^2(\Omega)^2 = \Hspace \oplus \Hspace^\perp, \ \text{where} \ 
  \Hspace^\perp = \{\nabla p: p \in H^1(\Omega) \land p|_{\Gamma_N} = 0\}.
\end{equation}
With this characterization of the orthogonal space of $\Hspace$, we \revA{have} the following lemma.
\begin{lemma}\label{lemm:space_equivalence}
The two characterizations of solenoidal spaces, defined in \eqref{eq:spaces1} and \eqref{eq:spaces2}
are equivalent, i.e. it holds $\Hspace = \Ht$ and $\V = \Vt$.
\end{lemma}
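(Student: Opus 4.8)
The claim has two parts: $\Hspace = \Ht$ (the $L^2$ case) and $\V = \Vt$ (the $H^1$ case). Both reduce to a density statement. The inclusion $\Ht \subseteq \Hspace$ is immediate: smooth solenoidal fields vanishing near $\Gamma_D$ satisfy $\nabla\cdot\uu=0$ and $\uu\cdot\nn|_{\Gamma_D}=0$, and these conditions are closed under $L^2$-convergence (the divergence is continuous into $H^{-1}$, and the normal trace on $\Gamma_D$ is continuous on solenoidal $L^2$ fields by the cited Lemma II.1.2.2 of Sohr), so the closure $\Ht$ lands in $\Hspace$. The analogous reasoning gives $\Vt \subseteq \V$ in $H^1$, where the trace $\uu|_{\Gamma_D}$ is even genuinely continuous. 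The whole content of the lemma is therefore the reverse inclusions, i.e. that $\mathcal V$ is dense in $\Hspace$ (resp. in $\V$).

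Let me focus on the $L^2$ case first. The natural strategy is an orthogonality argument using the Helmholtz decomposition \eqref{eq:Hperp}. Suppose $\uu \in \Hspace$ is orthogonal in $L^2(\Omega)^2$ to $\Ht = \overline{\mathcal V}^{L^2}$; I want to show $\uu = 0$. Since $\uu \perp \mathcal V$ and in particular $\uu$ is $L^2$-orthogonal to every $\ww \in \mathring{\mathcal V}$ (smooth compactly supported solenoidal fields, which lie in $\mathcal V$), the classical de Rham / Helmholtz argument gives $\uu = \nabla p$ for some $p \in H^1(\Omega)$; but $\uu \in \Hspace$ forces $\nabla p \in \Hspace$, so by the decomposition \eqref{eq:Hperp}, $\nabla p \in \Hspace \cap \Hspace^\perp = \{0\}$, hence $\uu = 0$. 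To make this clean I would run it purely through \eqref{eq:Hperp}: as $\uu \in \Hspace$, it suffices to test orthogonality against $\Hspace^\perp = \{\nabla p : p \in H^1(\Omega),\ p|_{\Gamma_N}=0\}$. The key computation is that for such $p$ and any $\ww \in \mathcal V$,
\begin{equation*}
  \Oprod{\ww, \nabla p} = -\Oprod{\nabla\cdot\ww, p} + \Gprod{\ww\cdot\nn,\, p} = \Gprodj{\ww\cdot\nn,\, p}{N},
\end{equation*}
because $\nabla\cdot\ww=0$ and $\ww$ vanishes near $\Gamma_D$, while $p|_{\Gamma_N}=0$ kills the remaining boundary term. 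Thus every element of $\Hspace^\perp$ is already $L^2$-orthogonal to $\mathcal V$, so $\mathcal V^{\perp} \supseteq \Hspace^\perp$; combined with $\mathcal V \subseteq \Hspace$ and $L^2 = \Hspace \oplus \Hspace^\perp$, a dimension/orthogonal-complement count forces $\overline{\mathcal V}^{L^2} = \Hspace$, i.e. $\Ht = \Hspace$.

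For the $H^1$ case $\V = \Vt$, the cleanest route is to leverage the $L^2$ result rather than redo a Helmholtz argument in $H^1$. Given $\uu \in \V$, I would approximate it by elements of $\mathcal V$ in the $H^1$ norm. One approach: exploit that $\V \subseteq \Hspace$ with a dense smooth subset, and upgrade the approximation using elliptic regularity of a Stokes-type corrector that restores the boundary condition $\uu|_{\Gamma_D}=\oo$ and solenoidality after a naive smooth truncation. Concretely, I would take a smooth (not necessarily solenoidal) $H^1$-approximation $\uu_\varepsilon$ of $\uu$ vanishing near $\Gamma_D$, then correct its divergence by solving a Bogovskii-type problem $\nabla\cdot\ww_\varepsilon = -\nabla\cdot\uu_\varepsilon$ with $\ww_\varepsilon$ supported away from $\Gamma_D$ and small in $H^1$; this yields a genuinely solenoidal element of $\mathcal V$ close to $\uu$. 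The main obstacle, and the step that needs genuine care, is precisely this correction on the mixed-boundary, nonsmooth (right-angled) domain: one must ensure the divergence-corrector inherits the support condition $\supp \subseteq \overline\Omega \setminus \Gamma_D$ and stays bounded in $H^1$, which is where Assumption \ref{ass:domain} on the geometry is really used. I would therefore isolate the Bogovskii solvability on such domains as the crux, and otherwise the inclusions and the $L^2$ orthogonality are routine.
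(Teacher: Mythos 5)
Your proof of $\Hspace = \Ht$ has a genuine gap at its central step. After applying de Rham to the orthogonality against compactly supported solenoidal test functions, you obtain $\uu = \nabla p$ for some $p \in H^1(\Omega)$, and you then conclude $\nabla p \in \Hspace^\perp$. This does not follow: by \eqref{eq:Hperp}, $\Hspace^\perp$ consists only of gradients of functions with $p|_{\Gamma_N}=0$, and $\Hspace$ itself contains nontrivial gradients (on a rectangle with $\Gamma_N$ the two vertical sides, $p(\xx)=\xx_1$ gives $\nabla p = (1,0)^T$, which is solenoidal with vanishing normal trace on $\Gamma_D$). So the conclusion $\nabla p \in \Hspace\cap\Hspace^\perp=\{0\}$ is only available once you know that $p$ is constant on $\Gamma_N$ --- the \emph{same} constant on all $L$ components --- and establishing that is the real content of the lemma. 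Your ``clean'' variant fails for the same structural reason: showing that every element of $\Hspace^\perp$ annihilates $\mathcal V$ is equivalent to $\overline{\mathcal V}^{L^2}\subseteq\Hspace^{\perp\perp}=\Hspace$, i.e.\ the trivial inclusion you already had; no ``orthogonal-complement count'' upgrades this to equality in infinite dimensions. What is needed is the reverse inclusion $\mathcal V^\perp\subseteq\Hspace^\perp$, and the paper obtains it precisely by exploiting the test functions you never use: elements of $\mathcal V$ with nonzero normal trace on $\Gamma_N$. A functional vanishing on all of $\mathcal V$ is represented by $\nabla p$ with $\int_{\Gamma_N}p\,(\vv\cdot\nn)\,ds=0$ for every $\vv\in\mathcal V$, and testing with solenoidal flux carriers that push fluid in at one point of $\Gamma_N$ and out at another forces $p$ to equal a single constant $P$ on all of $\Gamma_N$; subtracting $P$ then places $\nabla p$ in $\Hspace^\perp$. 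Without this step your argument proves nothing beyond $\Ht\subseteq\Hspace$.

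For $\V=\Vt$ your route (smooth truncation near $\Gamma_D$ followed by a Bogovskii-type divergence corrector supported away from $\Gamma_D$) is genuinely different from the paper's, and you rightly flag the corrector as the crux --- but it is left unproven and is delicate: the standard Bogovskii operator yields $\ww_\varepsilon\in H^1_0(\Omega)^2$ only when the prescribed divergence has zero mean, whereas $\int_\Omega\nabla\cdot\uu_\varepsilon\,dx=\int_{\Gamma_N}\uu_\varepsilon\cdot\nn\,ds$ need not vanish, and even the preliminary smooth approximation vanishing near $\Gamma_D$ requires care at the right-angle corners where $\Gamma_D$ meets $\Gamma_N$. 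The paper instead extends $\Omega$ to a larger domain in which every Neumann segment becomes interior, glues on a Stokes extension of $\vv|_{\Gamma_N}$ to obtain a divergence-free field in $H^1_0$ of the extended domain, and invokes the classical pure-Dirichlet density theorem there; this sidesteps both difficulties and is the argument you would need to supply or replace.
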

A proof of this result is given in \Cref{sect:app_proof_space_equivalence}.
With these spaces, we can introduce the mixed boundary condition Stokes operator by
$A\colon \mathcal D(A) \to \Hspace$
\begin{equation}\label{eq:def_Stokes_operator}
   \Oprod{A \uu,\vv} = \Oprod{\nabla \uu, \nabla \vv} \quad \textforall \vv \in \V.
\end{equation}
Its domain is given by $\mathcal D(A) = \{\uu \in \V: \exists \ff \in \Hspace \text{ s.t. } A \uu = \ff\}$.
We introduce the following two spaces of space and time dependent functions,
\revB{which we will repeatedly use in our regularity discussions.}
\begin{align}
  \label{eq:space_VV_definition}
  \VV & := L^2(I;\V) \cap H^1(I;\V^*) && \hspace{-3.6cm} \hookrightarrow C(\bar I;\Hspace),\\
  \label{eq:space_W_definition}
  \W & := L^2(I;H^2(\Omega)^2 \cap \V) \cap H^1(I;\Hspace) && \hspace{-3.6cm} \hookrightarrow C(\bar I;\V).
\end{align}
From \cite[Corollary 8]{simon_compact_1986} we obtain the following compact imbedding:
\begin{equation}\label{eq:compact_imbedding}
  \W \cimbed L^2(I;\V) \quad \text{and} \quad \W \cimbed C(\bar I;\Hspace).
\end{equation}

The main difficulty in discussing the Navier-Stokes equations with \eqref{eq:DN} boundary conditions, 
is the fact, that boundary terms arise when testing the equations with its solution.
These do not vanish, contrary to the pure Dirichlet case.
In order to discuss them in the following, we shall recall some important trace results.
For a bounded Lipschitz domain $\Omega$ \revB{and} $1 < p < \infty$ it holds,
see \cite[Thm. 3]{marschall_trace_1987}:
There exists a linear, bounded trace operator
\begin{equation}\label{eq:trace_general}
    \tr \colon W^{s,p}(\Omega) \to W^{s- \frac{1}{p},p}(\partial \Omega),
   \qquad  \text{whenever } \frac{1}{p} < s < 1 + \frac{1}{p}.
\end{equation}
The norm of fractional Sobolev spaces can be estimated according to the following theorem,
compare \cite[Theorem 1]{brezis_gagliardo_2018}:
\begin{theorem}[Gagliardo Nirenberg]\label{thm:gagliardo_nirenberg}
  Let $\Omega$ be a Lipschitz domain and let $\revB{s_2 \ge s_1 \ge 0}$, $1 \le p_1,p_2 \le \infty$ and $\theta \in (0,1)$.
  Define $s = \theta s_1 + (1-\theta)s_2$ and $1/p = \theta/p_1 + (1-\theta)/p_2$. If the 
  condition
  \begin{equation*}
    s_2 \text{ is integer } \ge 1, p_2 = 1 \text{ and } s_2 - s_1 \le 1 - \frac{1}{p_1}
  \end{equation*}
  is violated, then the following inequality holds
  \begin{equation*}
    \|v\|_{W^{s,p}(\Omega)} \le C \|v\|_{W^{s_1,p_1}(\Omega)}^\theta \|v\|_{W^{s_2,p_2}(\Omega)}^{1-\theta}
    \quad \textforall v \in W^{s_1,p_1}(\Omega) \cap W^{s_2,p_2}(\Omega).
  \end{equation*}
\end{theorem}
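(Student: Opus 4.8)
\section*{Proof proposal}

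The plan is to reduce the assertion to the full space $\R^n$ and to prove it there by Littlewood--Paley analysis. First I would invoke a \emph{universal} (simultaneous) extension operator for the Lipschitz domain $\Om$: there is a bounded linear map $E$ with $(Ev)|_\Om = v$ and $\|Ev\|_{W^{t,q}(\R^n)} \le C\,\|v\|_{W^{t,q}(\Om)}$ for every $t \ge 0$ and $1 \le q \le \infty$ \emph{at once} (Stein's operator in the integer case, refined to the full fractional scale by the construction of Rogers for Lipschitz domains). Applying $E$ to the two factors $\|v\|_{W^{s_1,p_1}(\Om)}$, $\|v\|_{W^{s_2,p_2}(\Om)}$ and using that restriction to $\Om$ only decreases $\|\,\cdot\,\|_{W^{s,p}}$, it suffices to prove the inequality for $u := Ev$ on all of $\R^n$.

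On $\R^n$ the natural formulation is at the level of the Besov scale, where the estimate holds \emph{without any exceptional case}. Writing the Littlewood--Paley decomposition $u = \sum_{j \ge 0} \Delta_j u$, the log-convexity of Lebesgue norms gives, block by block, $\|\Delta_j u\|_{L^p} \le \|\Delta_j u\|_{L^{p_1}}^\theta \,\|\Delta_j u\|_{L^{p_2}}^{1-\theta}$ precisely because $1/p = \theta/p_1 + (1-\theta)/p_2$. Multiplying by $2^{js}$ and using $s = \theta s_1 + (1-\theta)s_2$ yields $2^{js}\|\Delta_j u\|_{L^p} \le \bigl(2^{js_1}\|\Delta_j u\|_{L^{p_1}}\bigr)^{\theta}\bigl(2^{js_2}\|\Delta_j u\|_{L^{p_2}}\bigr)^{1-\theta}$, and Hölder's inequality for the sequences then gives
\begin{equation*}
  \|u\|_{B^s_{p,r}} \le \|u\|_{B^{s_1}_{p_1,r_1}}^{\theta}\,\|u\|_{B^{s_2}_{p_2,r_2}}^{1-\theta},
  \qquad \tfrac1r = \tfrac{\theta}{r_1} + \tfrac{1-\theta}{r_2},
\end{equation*}
for any admissible third indices. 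Note that no Bernstein inequality is needed for this core step.

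The whole difficulty, and the reason for the exclusion clause, lies in translating this Besov estimate back into the Sobolev--Slobodeckij norms $W^{t,q}$, i.e.\ in reconciling the \emph{third (microscopic) index}. For noninteger order one has the exact identity $W^{t,q} = B^t_{q,q}$, so choosing $r_i = p_i$ reproduces $r = p$ and closes the argument cleanly; for integer $t$ with $1<q<\infty$ one instead has $W^{t,q} = F^t_{q,2}$ and only the one-sided embeddings $B^t_{q,\min(q,2)} \hookrightarrow F^t_{q,2} \hookrightarrow B^t_{q,\max(q,2)}$. The single genuinely bad configuration is $s_2 \in \N$, $p_2 = 1$: here the factor $\|u\|_{W^{s_2,1}}$ controls \emph{only} the weak Besov norm $B^{s_2}_{1,\infty}$ (since $B^{s_2}_{1,1} \hookrightarrow W^{s_2,1} \hookrightarrow B^{s_2}_{1,\infty}$ with both inclusions strict), forcing $r_2 = \infty$, hence $1/r = \theta/r_1 \le 1/p$, so that the interpolated third index $r$ is too large to re-embed into the target $B^s_{p,p}$. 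I expect this to be the main obstacle. One escapes it by trading regularity for summability through $B^{\sigma}_{p,u} \hookrightarrow B^{s}_{p,v}$ for $\sigma > s$ (all $u,v$), which succeeds exactly when the regularity gap is strict, $s_2 - s_1 > 1 - 1/p_1$; in the borderline range $s_2 - s_1 \le 1 - 1/p_1$ this margin is exhausted and the inequality genuinely fails, which I would confirm by exhibiting an explicit lacunary Fourier series. Finally I would dispose of the remaining endpoint cases ($p_1$ or $p_2 \in \{1,\infty\}$, $s_1 = s_2$, or $\theta \to \{0,1\}$), each of which collapses to the identity map or to an already-settled subcase.
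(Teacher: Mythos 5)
First, a point of reference: the paper does not prove this statement at all --- it is recalled verbatim from \cite{brezis_gagliardo_2018} (their Theorem 1), so the only meaningful comparison is with the proof given there. Your strategy --- extend to $\R^n$ by a universal extension operator, interpolate block by block in a Littlewood--Paley decomposition to obtain the Besov-scale inequality with $1/r=\theta/r_1+(1-\theta)/r_2$, and then reconcile the third index with the Sobolev--Slobodeckij scale --- is the standard modern route and is in the spirit of that reference. The block-wise log-convexity step and the resulting Besov interpolation inequality are correct, and your diagnosis of why $p_2=1$ with integer $s_2$ is the dangerous configuration (only $W^{s_2,1}\hookrightarrow B^{s_2}_{1,\infty}$ is available, which drives the interpolated third index above $p$) is accurate.

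The gap sits exactly at the case for which the theorem is worth citing: $s_2\in\N$, $p_2=1$ and $s_2-s_1>1-1/p_1$, which the theorem's positive assertion \emph{includes}. You claim that ``trading regularity for summability'' via $B^{\sigma}_{p,u}\hookrightarrow B^{s}_{p,v}$ for $\sigma>s$ ``succeeds exactly when the regularity gap is strict'', but no mechanism in your sketch produces strict excess regularity at the \emph{same} integrability $p$: your interpolation only yields $B^{s}_{p,r}$ with $1/r=\theta/r_1$, hence $r>p$ whenever $p_2=1$, and perturbing $\theta$ moves $p$ together with $s$, so the embedding you invoke does not apply as stated and nothing explains where the threshold $1-1/p_1$ comes from. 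In \cite{brezis_gagliardo_2018} this case is the main content of the theorem and requires a finer quantitative input on $W^{s_2,1}$ for integer $s_2$ than the crude bound by $B^{s_2}_{1,\infty}$; without it your argument only covers the configurations in which $W^{s_2,p_2}$ embeds into a Besov space with a sufficiently small third index. Two smaller deferred issues: the identification $W^{t,q}=F^{t}_{q,2}$ used to re-enter the Sobolev scale requires $1<q<\infty$, so integer smoothness combined with $p$ or $p_1$ in $\{1,\infty\}$ needs the one-sided embeddings written out rather than ``disposed of''; and the lacunary counterexample is unnecessary here, since the statement asserts the inequality only when the exceptional condition is violated and makes no claim of failure otherwise.
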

\revA{As the boundary terms arising in our discussion of the problem do not involve any derivatives,
we are interested in precise estimates for $L^p$ norms on the boundary.}
From \cite[Thm. 1.6.6]{Brenner2008} there holds for $1\le p \le \infty$
   \begin{equation}\label{eq:trace_interpolation_1}
     \|v\|_{L^p(\partial \Omega)} \le C 
     \|v\|^{\frac{1}{p}}_{W^{1,p}(\Omega)} 
     \|v\|^{1- \frac{1}{p}}_{L^p(\Omega)} 
      \quad \textforall v \in W^{1,p}(\Omega).
   \end{equation}
The result there is stated without proof, a proof can be found, e.g., in \cite{grisvard_elliptic_2011}.
Intuitively, the scaling in this theorem means, that if $v$ possesses increasing regularity in $L^p(\Omega)$,
less additional regularity of its first derivatives is needed, in order to properly define a trace.
If $v$ does not possess any $W^{1,p}(\Omega)$ regularity for $p>2$, but is only in $H^1(\Om)$, this tradeoff
  can be switched in the sense that a higher contribution of the $H^1(\Om)$ norm allows for a bound of 
the trace in $L^p$ for larger values of $p$. There holds the following result, in which we want to emphasize
the switch in the exponents compared to \eqref{eq:trace_interpolation_1}.
\begin{theorem}[Trace $L^p$ estimate]\label{thm:trace_interpolation_2}~\\
   Let $\Omega$ be a Lipschitz domain in $\mathbb R^2$,
   and let $2 \le p < \infty$.
   Then there is a constant $C\revD{=C(p)}$, such that
   \begin{equation*}
      \|v\|_{L^p(\partial \Omega)} \le C \|v\|^{1-\frac{1}{p}}_{H^1(\Omega)} \|v\|^{\frac{1}{p}}_{L^2(\Omega)}
      \quad \textforall v \in H^1(\Omega).
   \end{equation*}
\end{theorem}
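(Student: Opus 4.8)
The plan is to obtain the bound by passing through an intermediate fractional Sobolev space in $\Omega$, taking its trace, and then invoking a Sobolev embedding on the one-dimensional boundary. Concretely, I would prove the estimate for $2 < p < \infty$ and recover the endpoint $p=2$ directly from \eqref{eq:trace_interpolation_1}, which for $p=2$ is already exactly the claimed inequality since $W^{1,2}(\Omega)=H^1(\Omega)$.

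First I would interpolate $v$ in the interior. Since $H^s(\Omega)$ is the complex interpolation space $[H^1(\Omega),L^2(\Omega)]_\theta$ with $s=1-\theta$ (as recalled after \eqref{eq:reiteration_theorem}), choosing $\theta = 1/p$ and using the interpolation estimate $\|v\|_{[X,Y]_\theta}\le \|v\|_X^{1-\theta}\|v\|_Y^\theta$ gives $\|v\|_{H^{1-1/p}(\Omega)} \le C\|v\|_{H^1(\Omega)}^{1-1/p}\|v\|_{L^2(\Omega)}^{1/p}$. The same inequality also follows from \Cref{thm:gagliardo_nirenberg} applied with $s_1=0$, $p_1=2$, $s_2=1$, $p_2=2$ and $\theta=1/p$, whose excluded borderline case (which requires $p_2=1$) is not met here.

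Next I would take the trace. For $2<p<\infty$ the order $s:=1-1/p$ lies in $(1/2,1)\subset(1/2,3/2)$, so the trace operator \eqref{eq:trace_general} with integrability $2$ maps $H^{1-1/p}(\Omega)$ boundedly into $H^{1/2-1/p}(\partial\Omega)$. Finally, on the compact one-dimensional Lipschitz manifold $\partial\Omega$, the exponent $\sigma:=1/2-1/p$ satisfies $0<\sigma<1/2$ and $\tfrac1p=\tfrac12-\sigma$, so the subcritical Sobolev embedding $H^{\sigma}(\partial\Omega)\hookrightarrow L^p(\partial\Omega)$ holds; localizing by a finite atlas reduces this to the one-dimensional embedding on intervals. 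Chaining the three bounds yields $\|v\|_{L^p(\partial\Omega)} \le C\|v\|_{H^{1-1/p}(\Omega)} \le C\|v\|_{H^1(\Omega)}^{1-1/p}\|v\|_{L^2(\Omega)}^{1/p}$, as claimed.

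The main obstacle is this last step, i.e. controlling everything on the boundary, which is only a Lipschitz curve: one must ensure that the fractional spaces $H^\sigma(\partial\Omega)$ for $0\le\sigma\le 1$ together with both the fractional trace and the subcritical Sobolev embedding are available in this low-regularity geometric setting, and that the embedding constant is tracked in $p$. The $p$-dependence is essential and benign: as $p\to\infty$ the embedding $H^{1/2-1/p}(\partial\Omega)\hookrightarrow L^p(\partial\Omega)$ degenerates (the endpoint $H^{1/2}\not\hookrightarrow L^\infty$ fails), which is precisely why the constant in the statement is allowed to depend on $p$. A minor point to handle cleanly is the borderline order at $p=2$, where $s=1/2$ is not admissible in \eqref{eq:trace_general}; as noted above this case is simply the already-available inequality \eqref{eq:trace_interpolation_1}.
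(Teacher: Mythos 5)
Your argument is correct and follows essentially the same route as the paper's proof: handle $p=2$ via \eqref{eq:trace_interpolation_1}, and for $p>2$ chain the Gagliardo--Nirenberg interpolation $\|v\|_{H^{1-1/p}(\Omega)}\le C\|v\|_{H^1(\Omega)}^{1-1/p}\|v\|_{L^2(\Omega)}^{1/p}$, the fractional trace \eqref{eq:trace_general} into $H^{1/2-1/p}(\partial\Omega)$, and the one-dimensional Sobolev embedding $H^{1/2-1/p}(\partial\Omega)\hookrightarrow L^p(\partial\Omega)$. The only difference is presentational (you state the three steps in the reverse order of the paper's single inequality chain), so nothing further is needed.
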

\begin{proof}
   Note, that the case $p=2$ is a special case of \eqref{eq:trace_interpolation_1}, which is why
   we only focus on the case $p>2$.
   Since $p>2$, it holds $1/p < 1/2$, which allows us to define $s:= 1- 1/p$ and we have $1/2 < s < 1$.
   By the Sobolev imbedding theorem, since the boundary $\partial \Omega$
   of the domain is one-dimensional, there holds the embedding  
   $H^{s-\frac{1}{2}}(\partial \Omega) \hookrightarrow L^q(\partial \Omega)$, for 
   $q \le \frac{1 \cdot 2}{1 - (s-1/2)\cdot 2} = \frac{2}{2-2s}=\frac{2}{2-2+2/p} = p$.
   Thus, together with \eqref{eq:trace_general} and \Cref{thm:gagliardo_nirenberg} there holds 
   \begin{equation*}
     \|v\|_{L^p(\dO)} \le C \|v\|_{H^{s-\frac{1}{2}}(\dO)}
     \le C \|v\|_{H^s(\Omega)}
     \le C \|v\|_{H^1(\Omega)}^s \|v\|_{L^2(\Omega)}^{1-s}
     = C \|v\|_{H^1(\Omega)}^{1-\frac{1}{p}} \|v\|_{L^2(\Omega)}^{\frac{1}{p}}.
   \end{equation*}
\end{proof}
Let us lastly summarize some identities and estimates concerning the nonlinear term in the 
Navier-Stokes equations.
It is straightforward to verify with integration by parts, that for $\uu,\vv,\ww \in \V$, it holds
\begin{equation*}
   \Oprod{(\uu \cdot \nabla)\vv,\ww} 
   = - \Oprod{(\uu \cdot \nabla)\ww,\vv} + \Gprod{\uu\cdot \nn,\ww\cdot \vv}.
\end{equation*}
Choosing $\ww=\vv$ and using the homogeneous Dirichlet data on $\Gamma_D$, there thus holds
\begin{equation}\label{eq:trilinear_boundary_term}
   \Oprod{(\uu \cdot \nabla)\vv,\vv} 
   = \frac12 \left(\uu\cdot \nn, |\vv|^2\right)_{\partial \Om}
   = \frac12 \Gprodj{\uu \cdot \nn,|\vv|^2}{N}.
\end{equation}
From \cite[Theorem 3.1]{adams_cone_1977}, we obtain that for any $2 \le p < \infty$, there exists $C=\revD{C(p)}> 0$,
such that 
\begin{equation}\label{eq:Lp_interpolation_estimate}
   \|\varphi\|_{L^p(\Om)} \le C \|\varphi\|_{L^2(\Om)}^{\frac2p} \|\nabla \varphi\|_{L^2(\Om)}^{1-\frac2p}
\end{equation}
for all $\varphi \in \{\psi \in H^1(\Om) : \psi|_{\Gamma_D} = 0\}$.
We shall mostly use this estimate for $p=4$, as together with Hölder's inequality, it allows us to estimate
\begin{equation}\label{eq:trilinear_l4_est}
   \Oprod{(\uu \cdot \nabla )\vv,\ww}
   \le \|\uu\|_{L^4(\Om)} \|\nabla \vv\|_{L^2(\Om)} \|\ww\|_{L^4(\Om)}
   \le C \|\uu\|_{L^2(\Om)}^\frac12  \|\nabla \uu\|_{L^2(\Om)}^\frac12
      \|\nabla \vv\|_{L^2(\Om)} 
      \|\ww\|_{L^2(\Om)}^\frac12 \|\nabla \ww\|_{L^2(\Om)}^\frac12.
\end{equation}
Moreover, for $p=4$ this estimate yields the imbeddings
\begin{equation}\label{eq:L4L4_imbeddings}
   \VV \hookrightarrow L^4(I\times \Om)^2 \qquad \text{and} \qquad
   \W \hookrightarrow W^{1,4}(I;L^4(\Om)^2).
\end{equation}

\section{Discussion of State Equation}\label{sect:state}
With the auxiliary results of the previous section established, we can now turn towards the discussion of 
the state equation.
In its general form for initial data $\uu_0 \in \V$, right hand side $\ff \in L^2(I;L^2(\Omega)^2)$ and 
boundary data $q_i \in L^2(I)$, $i=1,...,L$, it has the form
\begin{equation}\label{eq:NS_fullData}
  \left\{
  \begin{aligned}
    \revB{\Opair{\partial_t \uu,\vv}} + \Oprod{\nabla \uu,\nabla \vv} + \Oprod{(\uu\cdot \nabla)\uu,\vv}
    &= \Oprod{\ff,\vv} - \sum_{i=1}^L q_i(t)\Gprodj{\vv, \nn}{N,i}
    \quad \textforall \vv \in \V,  \alev \ t \in I, \\
    \uu(0) &= \uu_0.
  \end{aligned}
  \right.
\end{equation}
To analyze this problem, let us recall some results on related and auxiliary problems.
We first discuss the stationary Stokes equations with mixed homogeneous boundary data, i.e.,
for $\ff \in L^2(\Omega)^2$ we search $(\ww,p_\ww) \in H^1(\Omega)^2 \times L^2(\Omega)$ such that
\begin{equation}\label{eq:Stokes_stationary}
   \left\lbrace
   \begin{aligned}
      - \Delta \ww + \nabla p_\ww &= \ff &&\text{in } \Omega,\\
      \nabla \cdot \ww &= 0&&\text{in } \Omega,\\
         \ww &= \oo&& \text{on } \Gamma_D,\\
         \partial_\nn \ww - p_\ww \nn &= \oo && \text{on } \Gamma_N.
   \end{aligned}
   \right.
\end{equation}
Its weak formulation in terms of divergence free spaces reads: Find $\ww \in \V$ such that
\begin{equation*}
  \Oprod{\nabla \ww,\nabla \vv} = \Oprod{\ff,\vv} \quad \textforall \vv \in \V.
\end{equation*}
Solvability of this problem in terms of $\ww$ can be argued by the Lax-Milgram theorem, while the existence 
of a unique associated pressure $p_\ww \in L^2(\Omega)$ can be argued as in
\cite[Lemma 5.1, Theorem 5.2]{mazya_l_2007}.
\revD{The argument presented there for the 3D case, directly translates to 2D.
   Note that due to the presence of Do-Nothing boundary conditions, the pressure is unique in the whole
   $L^2(\Om)$ and not only up to constants, as it would be the case for pure Dirichlet conditions.
}
Moreover, due to \Cref{ass:domain}, there holds the following $H^2$ regularity result, 
see \cite[Theorem A.1]{benes_solutions_2016}:
\begin{theorem}\label{thm:mixed_Stokes_h2}
  Let $(\ww,p_\ww) \in \V \times L^2(\Omega)$ be weak solutions to \eqref{eq:Stokes_stationary}
  for $\ff \in L^2(\Omega)^2$. Then $(\ww,p_\ww) \in [\V \cap H^2(\Omega)^2] \times H^1(\Omega)$ and there holds the 
   bound 
   \begin{equation*}
      \|\ww\|_{H^2(\Omega)} + \|p_\ww\|_{H^1(\Omega)} \le C \|\ff\|_{L^2(\Omega)}.
   \end{equation*}
\end{theorem}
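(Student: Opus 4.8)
The existence of $(\ww, p_\ww) \in \V \times L^2(\Om)$ is already in hand, so the plan is only to upgrade the regularity to $\ww \in H^2(\Om)^2$ and $p_\ww \in H^1(\Om)$ together with the stated bound. I would argue by localization: fix a finite partition of unity on $\bar\Om$ subordinate to a cover by interior patches, patches meeting exactly one $C^2$ component of $\Gamma_D$ or one line segment of $\Gamma_N$, and patches centered at the finitely many transition points where $\Gamma_D$ meets $\Gamma_N$. On the interior patches, classical interior regularity for the Stokes system controls the $H^2 \times H^1$ norm by $\|\ff\|_{L^2(\Om)}$ plus lower-order terms. On a patch touching only $\Gamma_D$, flattening the $C^2$ boundary (\Cref{ass:domain}) and applying the usual Agmon--Douglis--Nirenberg / Cattabriga theory for the Dirichlet--Stokes problem gives the local estimate; on a patch touching only a flat segment of $\Gamma_N$, the Do-Nothing condition $\partial_\nn \ww - p_\ww \nn = \oo$ is of Neumann type and the analogous boundary regularity applies. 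In each of these cases the mechanism is the same: tangential difference quotients give the tangential second derivatives, after which the momentum and continuity equations recover the normal second derivatives and $\nabla p_\ww$.

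The essential step, and the one where \Cref{ass:domain} is used in full, is the estimate on a corner patch. Freezing coefficients at a transition point reduces matters to the model problem on the quarter-plane with homogeneous Dirichlet data on one ray and the homogeneous Do-Nothing condition on the perpendicular ray, the two rays meeting at the angle $\tfrac{\pi}{2}$. The plan for the straight model is a reflection argument: extend $\ww$, $p_\ww$ and $\ff$ across the flat Neumann ray, reflecting the normal and tangential velocity components with opposite parities and $p_\ww$ with the parity that keeps the extended fields divergence-free, solving a Stokes system with reflected right-hand side, and satisfying the homogeneous Do-Nothing condition on that ray. Because the opening is exactly $\tfrac{\pi}{2}$, the Dirichlet ray is carried onto the opposite ray, so the extended solution satisfies homogeneous Dirichlet data on a full straight line and solves a Stokes problem on a half-disk with flat, smooth boundary, where $H^2 \times H^1$ regularity is classical; restricting back yields the corner estimate. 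For a curved $C^2$ component of $\Gamma_D$ one cannot reflect directly, and I would instead invoke the Kondrat'ev--Dauge corner calculus: the solution decomposes into an $H^2$ regular part plus singular functions $r^{\lambda}$ governed by the eigenvalues $\lambda$ of the mixed Dirichlet--Neumann Stokes pencil at opening $\tfrac{\pi}{2}$, and the point is that all these exponents satisfy $\lambda \ge 1$ with the exponent-one mode being a regular polynomial field, so no $H^2$-obstructing singularity is present.

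The hard part will be precisely this exponent computation for the coupled velocity--pressure system: unlike the scalar Laplacian, where at a right angle the mixed Dirichlet--Neumann exponents are the odd integers $1, 3, 5, \dots$ and the gap above the critical value $\lambda = 1$ is transparent, the Stokes pencil couples velocity and pressure through the divergence constraint and the gradient-form stress in $\partial_\nn \ww - p_\ww \nn$, so one must verify by hand that the characteristic equation of the pencil has no root in the strip $0 < \operatorname{Re}\lambda < 1$ at the angle $\tfrac{\pi}{2}$. Granting this, the remainder is routine: once $\ww \in H^2(\Om)^2$ on every patch, the momentum equation gives $\nabla p_\ww = \Delta \ww + \ff \in L^2(\Om)^2$, hence $p_\ww \in H^1(\Om)$, with the Do-Nothing condition fixing $p_\ww$ without the constant indeterminacy of the pure Dirichlet case. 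Summing the local estimates against the partition of unity produces a global bound $\|\ww\|_{H^2(\Om)} + \|p_\ww\|_{H^1(\Om)} \le C \big( \|\ff\|_{L^2(\Om)} + \|\ww\|_{H^1(\Om)} + \|p_\ww\|_{L^2(\Om)} \big)$, and the lower-order terms are absorbed using the energy estimate $\|\ww\|_{H^1(\Om)} + \|p_\ww\|_{L^2(\Om)} \le C \|\ff\|_{L^2(\Om)}$ coming from the weak formulation and the associated pressure construction, giving the claimed inequality.
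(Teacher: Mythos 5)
The paper does not actually prove this theorem: it is quoted verbatim from \cite[Theorem A.1]{benes_solutions_2016}, and the closely related Lemma 4.6 is likewise delegated to \cite[Appendix A.3]{lasiecka_boundary_2018}. Your overall architecture --- partition of unity, interior and flat-boundary Agmon--Douglis--Nirenberg/Cattabriga estimates, a dedicated analysis at the finitely many $\Gamma_D$--$\Gamma_N$ transition points, and absorption of lower-order terms via the energy estimate --- is exactly the strategy of those references, so in spirit you are reconstructing the cited proof rather than taking a new route. The interior, pure-Dirichlet and pure-Neumann patches are unproblematic (the Do-Nothing condition is complementing for the Stokes operator on a flat segment), and the final bootstrap $\nabla p_\ww = \Delta\ww + \ff$ is fine.

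The corner step, however, contains a genuine gap. The reflection across the Neumann ray does not work for the Do-Nothing condition: divergence-freeness and invariance of the momentum equation force the normal and tangential velocity components to be extended with \emph{opposite} parities, so one component must be extended oddly, and an odd extension is $H^1$ across the interface only if that component's trace vanishes on $\Gamma_N$. The Do-Nothing condition $\partial_\nn\ww - p_\ww\nn = \oo$ gives no such vanishing (indeed neither $\ww\cdot\nn$ nor $\ww\cdot\ttau$ is zero on $\Gamma_N$); it only cancels the jump of the co-normal derivative of the \emph{evenly} extended component. Hence the extended field is not even a weak solution on the half-disk, and the "Dirichlet on a full line" reduction collapses. (Reflection across the Dirichlet ray fails for the dual reason: the evenly extended normal component produces an interface source $-2p_\ww|_{\Gamma_D}\delta_{\Gamma_D}\nn$ because the pressure does not vanish on the wall.) This leaves only your fallback, the Kondrat'ev/Mellin pencil computation, and there you explicitly write "granting this" for the verification that the mixed Dirichlet/Do-Nothing Stokes pencil at opening $\tfrac{\pi}{2}$ has no eigenvalue with $0<\operatorname{Re}\lambda<1$ and only a polynomial mode at $\lambda=1$. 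That verification is not a routine afterthought --- it is the entire content of the theorem, and the reason \Cref{ass:domain} insists on the angle being exactly $\tfrac{\pi}{2}$ (note that \cite{mazia_elliptic_2010} explicitly \emph{excludes} right angles). As written, the proposal defers precisely the step that distinguishes this result from standard smooth-boundary regularity, so it should either carry out the exponent computation or, as the paper does, cite \cite[Theorem A.1]{benes_solutions_2016} where it is done.
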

A similar result also holds for the instationary Stokes equations with mixed homogeneous boundary data
\begin{equation}\label{eq:Stokes_instationary}
  \left\{
  \begin{aligned}
    \revB{\Opair{\partial_t \uu,\vv}} + \Oprod{\nabla \uu,\nabla \vv} &= \Opair{\ff,\vv} 
    \quad \textforall \vv \in \V,  \alev t \in I,\\
    \uu(0) &= \uu_0.
  \end{aligned}
  \right.
\end{equation}
For this, we have the following, see \cite[Theorem 3.4]{benes_solutions_2016},
also \cite[Proposition 72.2]{ern_finite_2021}.
\begin{proposition}\label{prop:instat_stokes_solvability}
  \revD{Let } $\ff \in L^2(I;\V^*)$ and $\uu_0 \in \Hspace$. \revD{Then }
  \eqref{eq:Stokes_instationary} possesses a unique solution $\uu \in \VV$ with a bound
   \begin{equation*}\label{eq:LtwoV_Stokes_Bound}
      \|\partial_t \uu\|_{L^2(I;\V^*)} + \|\uu\|_{L^2(I;\V)} 
      \le C \revD{\left(\|\uu_0\|_{L^2(\Om)} + \|\ff\|_{L^2(I;\V^*)} \right)}.
  \end{equation*}
  Further, if $\ff \in L^2(I;L^2(\Omega)^2)$ and $\uu_0 \in \V$, then
  $\uu \in \revB{\W}$ \revB{and there exists a unique pressure $p_\uu \in L^2(I;H^1(\Omega))$, such that
  \begin{equation*}\label{eq:pressure_existence}
    \partial_t \uu - \Delta \uu + \nabla \revB{p_\uu} = \ff
  \end{equation*}
  }
  holds in the $L^2(I;L^2(\Omega)^2)$ sense, and there holds a bound
   \begin{equation*}
     \|\uu\|_{L^2(I;H^2(\Omega))} +\|\uu\|_{H^1(I;L^2(\Omega))} + \|\revB{p_\uu}\|_{L^2(I;H^1(\Omega))} 
      \le C (\|\uu_0\|_\V + \|\ff\|_{L^2(I;L^2(\Omega))}).
   \end{equation*}
\end{proposition}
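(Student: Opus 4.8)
The plan is to treat the two assertions separately, deriving the first from the standard Gelfand-triple theory and the second from abstract maximal parabolic regularity combined with the stationary result \Cref{thm:mixed_Stokes_h2}.

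For the first assertion I would work in the Gelfand triple $\V \hookrightarrow \Hspace \hookrightarrow \V^*$ and consider the bilinear form $a(\uu,\vv) := \Oprod{\nabla\uu,\nabla\vv}$ on $\V\times\V$. Since functions in $\V$ vanish on $\Gamma_D$, a Poincar\'e inequality yields coercivity $a(\uu,\uu) \ge c\|\uu\|_{H^1(\Omega)}^2$, while boundedness is immediate. The classical Lions existence theorem for abstract parabolic problems (e.g. \cite[Proposition 72.2]{ern_finite_2021}) then gives, for $\ff \in L^2(I;\V^*)$ and $\uu_0 \in \Hspace$, a unique $\uu \in \VV$, and the stated energy estimate follows either from the cited abstract result or by testing the equation with $\uu$ and applying Gr\"onwall. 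The embedding $\VV \hookrightarrow C(\bar I;\Hspace)$ from \eqref{eq:space_VV_definition} renders the initial condition meaningful.

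For the second assertion I would pass to the operator formulation. Let $P\colon L^2(\Omega)^2 \to \Hspace$ denote the Leray projection associated with the Helmholtz decomposition \eqref{eq:Hperp}, and let $A$ be the Stokes operator of \eqref{eq:def_Stokes_operator}. Since $a$ is symmetric, $A$ is self-adjoint and positive; by \Cref{thm:mixed_Stokes_h2} its domain satisfies $\mathcal D(A) \subset H^2(\Omega)^2 \cap \V$ with $\|\uu\|_{H^2(\Omega)} \le C\|A\uu\|_{L^2(\Omega)}$, so that $\mathcal D(A) \hookrightarrow H^2(\Omega)^2 \cimbed \Hspace$ and $A$ has compact resolvent. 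Testing \eqref{eq:Stokes_instationary} against $\vv \in \V \subset \Hspace$ and using $\Oprod{P\ff,\vv} = \Oprod{\ff,\vv}$ shows that the weak solution from the first step satisfies the abstract Cauchy problem $\partial_t \uu + A\uu = P\ff$, $\uu(0)=\uu_0$, in $L^2(I;\V^*)$. Now $\ff \in L^2(I;L^2(\Omega)^2)$ forces $P\ff \in L^2(I;\Hspace)$, and the form domain of $A$ is exactly $\mathcal D(A^{1/2}) = \V$, so $\uu_0 \in \V$ is precisely the correct trace condition. Maximal $L^2$-regularity for a self-adjoint positive operator on a Hilbert space---most transparently verified by expanding $\uu$, $P\ff$ and $\uu_0$ in the orthonormal eigenbasis of $A$ and estimating each Fourier mode, or by invoking de Simon's theorem---then upgrades the solution to $\uu \in H^1(I;\Hspace) \cap L^2(I;\mathcal D(A))$ with $\|\partial_t\uu\|_{L^2(I;\Hspace)} + \|A\uu\|_{L^2(I;\Hspace)} \le C(\|\uu_0\|_\V + \|P\ff\|_{L^2(I;\Hspace)})$. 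Using $\mathcal D(A) \hookrightarrow H^2(\Omega)^2$ converts this into the claimed $L^2(I;H^2(\Omega))$ and $H^1(I;L^2(\Omega))$ bounds, i.e. $\uu \in \W$.

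It remains to recover the pressure, which I would do by reducing to the stationary theory at almost every time. Fixing $t$ and testing against $\vv \in \V$ gives $\Oprod{\nabla\uu(t),\nabla\vv} = \Oprod{\ff(t)-\partial_t\uu(t),\vv}$, so $\uu(t)$ is the weak solution of the stationary mixed Stokes problem \eqref{eq:Stokes_stationary} with right-hand side $\gg(t):=\ff(t)-\partial_t\uu(t) \in L^2(\Omega)^2$. The stationary pressure-existence result (as in \cite{mazya_l_2007}, together with \Cref{thm:mixed_Stokes_h2}) supplies a unique $p_\uu(t) \in H^1(\Omega)$ with $-\Delta\uu(t)+\nabla p_\uu(t) = \gg(t)$ in $L^2(\Omega)^2$ and $\|p_\uu(t)\|_{H^1(\Omega)} \le C\|\gg(t)\|_{L^2(\Omega)}$. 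Since the stationary solution operator is linear and bounded, $t\mapsto p_\uu(t)$ is Bochner measurable; squaring and integrating the pointwise bound over $I$, combined with the already established control of $\partial_t\uu$ in $L^2(I;L^2(\Omega))$, yields $p_\uu \in L^2(I;H^1(\Omega))$, the full estimate, and the identity $\partial_t\uu - \Delta\uu + \nabla p_\uu = \ff$ in $L^2(I;L^2(\Omega)^2)$. The one genuinely nontrivial ingredient is the $H^2$-regularity of $\mathcal D(A)$, which is exactly where the corner and mixed-boundary geometry of \Cref{ass:domain} enters through \Cref{thm:mixed_Stokes_h2}; granting that, the maximal-regularity step is the main technical point and everything else is routine.
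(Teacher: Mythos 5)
Your argument is correct and follows essentially the same route as the paper, which cites \cite[Proposition 72.2]{ern_finite_2021} for the Gelfand-triple existence step, \cite[Theorem 3.4]{benes_solutions_2016} for the maximal-regularity upgrade resting on the $H^2$ domain characterization of \Cref{thm:mixed_Stokes_h2}, and recovers the pressure exactly as you do, by applying the stationary mixed Stokes theory to $\ff(t)-\partial_t\uu(t)$ for almost every $t\in I$. Your write-up merely makes explicit (eigenbasis expansion, $\mathcal D(A^{1/2})=\V$ as the trace space for $\uu_0$) what the paper delegates to the cited references.
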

Here, the statements for $\revB{p_\uu}$ were obtained by bootstrapping and applying the ones
of \Cref{thm:mixed_Stokes_h2} for almost every $t \in I$.
With this, we can consider the instationary Navier-Stokes equations, 
first with homogeneous \eqref{eq:DN} boundary conditions.
We consider the problem: Find $\uu \in \VV$ such that
\begin{equation}\label{eq:NS_DN}
   \left\lbrace
   \begin{aligned}
      \revB{\Opair{\partial_t \uu, \vv}} + \Oprod{\nabla \uu,\nabla \vv} +
   \Oprod{(\uu\cdot \nabla)\uu,\vv} &= \Oprod{\ff,\vv} \quad \textforall \vv \in \V,  \alev t \in I,\\
   \uu(0) &= \uu_0.
   \end{aligned}\right.
\end{equation}
Due to the choice of boundary conditions, existence of weak solutions to the 
Navier-Stokes problem is not a priori given. 
However, if a solution exists, \revD{we are able to show uniqueness and, if permitted by the data, improved regularity. Let us first discuss regularity and defer the discussion of uniqueness to \Cref{thm:uniqueness_stateeq}. If} the data are sufficiently regular, the solution has the 
same regularity as the one for the corresponding Stokes problem.
\begin{theorem}\label{thm:NS_H2}
  Let $\uu \in \revB{\VV}$ be a weak solution to \eqref{eq:NS_DN} for $\ff \in L^2(I;L^2(\Omega)^2)$ and 
   $\uu_0 \in \V$. Then $\uu \in \W$ and there holds a bound
      \begin{equation*}
         \|\uu\|_{L^2(I;H^2(\Omega))} + \|\uu\|_{H^1(I;L^2(\Omega))}
         \le C (\|\uu\|_{L^2(I;\V)} + \|\uu\|_{L^\infty(I;\Hspace)})\cdot (\|\ff\|_{L^2(I;L^2(\Omega))} + \|\uu_0\|_\V).
      \end{equation*}
      \revD{Moreover, there exists a unique pressure $p_\uu \in L^2(I;H^1(\Om))$ such that 
        \begin{equation*}
          \partial_t \uu - \Delta \uu + (\uu\cdot \nabla)\uu + \nabla p_\uu = \ff
        \end{equation*}
      holds in the $L^2(I;L^2(\Om)^2)$ sense, together with a bound}
        \begin{equation*}
          \revD{
          \|p_\uu\|_{L^2(I;H^1(\Om))} \le C \left( \|\ff\|_{L^2(I;L^2(\Om))} 
          + \|\uu\|_{L^2(I;H^2(\Om))} + \|\uu\|_{H^1(I;L^2(\Om))} 
          + \|\uu\|_{L^2(I;H^2(\Om))}\|\uu\|_{H^1(I;L^2(\Om))} \right).
        }
        \end{equation*}
\end{theorem}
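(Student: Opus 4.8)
The plan is to bootstrap the regularity of the given weak solution by means of the linear theory of \Cref{prop:instat_stokes_solvability}. The key observation is that once I know $(\uu\cdot\nabla)\uu\in L^2(I;L^2(\Om)^2)$, the field $\uu$ solves the \emph{linear} instationary Stokes problem \eqref{eq:Stokes_instationary} with right hand side $\gg:=\ff-(\uu\cdot\nabla)\uu\in L^2(I;L^2(\Om)^2)$ and initial value $\uu_0\in\V$, so the second part of \Cref{prop:instat_stokes_solvability} directly produces $\uu\in\W$, a unique pressure $p_\uu\in L^2(I;H^1(\Om))$, and the corresponding bounds. Everything therefore hinges on placing the convective term in $L^2(I;L^2(\Om)^2)$, and for this I first establish an a priori enstrophy estimate.

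First I would test the equation formally with the Stokes operator applied to the solution, i.e.\ with $A\uu$, using $\|A\uu\|_{L^2(\Om)}\simeq\|\uu\|_{H^2(\Om)}$ from \Cref{thm:mixed_Stokes_h2}. This yields
\begin{equation*}
 \tfrac12\tfrac{d}{dt}\|\nabla\uu\|_{L^2(\Om)}^2 + \|A\uu\|_{L^2(\Om)}^2
 = \Oprod{\ff,A\uu} - \Oprod{(\uu\cdot\nabla)\uu,A\uu}.
\end{equation*}
Crucially, the convective term is estimated here \emph{directly}, without integrating by parts, so the boundary contribution of \eqref{eq:trilinear_boundary_term} --- the very term responsible for possible non-existence --- never enters this computation. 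Using \eqref{eq:Lp_interpolation_estimate} with $p=4$ for $\uu$, and \Cref{thm:gagliardo_nirenberg} together with a Sobolev embedding for $\nabla\uu$ (which does not vanish on $\Gamma_D$), I bound $\|\uu\|_{L^4}\|\nabla\uu\|_{L^4}\|A\uu\|_{L^2}\le C\|\uu\|_{L^2}^{1/2}\|\nabla\uu\|_{L^2}\|A\uu\|_{L^2}^{3/2}$ and absorb a factor $\tfrac12\|A\uu\|_{L^2}^2$ by Young's inequality, arriving at
\begin{equation*}
 \tfrac{d}{dt}\|\nabla\uu\|_{L^2(\Om)}^2 + \|A\uu\|_{L^2(\Om)}^2
 \le C\,\|\uu\|_{L^2(\Om)}^2\|\nabla\uu\|_{L^2(\Om)}^2\cdot\|\nabla\uu\|_{L^2(\Om)}^2 + 2\|\ff\|_{L^2(\Om)}^2 .
\end{equation*}
Applying Gronwall's lemma, the coefficient integrates to $C\|\uu\|_{L^\infty(I;\Hspace)}^2\|\uu\|_{L^2(I;\V)}^2$, which is finite since $\uu\in\VV\hookrightarrow C(\bar I;\Hspace)$; this is precisely the mechanism by which the final constant comes to depend on $\|\uu\|_{L^2(I;\V)}+\|\uu\|_{L^\infty(I;\Hspace)}$. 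I thereby obtain $\uu\in L^\infty(I;\V)\cap L^2(I;H^2(\Om)^2)$ with the first asserted bound.

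With $\uu\in L^\infty(I;\V)\cap L^2(I;H^2(\Om)^2)$ in hand, the convective term lands in $L^2(I;L^2(\Om)^2)$: splitting $\|(\uu\cdot\nabla)\uu\|_{L^2(I\times\Om)}\le\|\uu\|_{L^4(I\times\Om)}\|\nabla\uu\|_{L^4(I\times\Om)}$ and invoking the embeddings \eqref{eq:L4L4_imbeddings} together with $\W\hookrightarrow C(\bar I;\V)$ from \eqref{eq:space_W_definition}, both factors are controlled by $\W$–norms. Feeding $\gg\in L^2(I;L^2(\Om)^2)$ into \Cref{prop:instat_stokes_solvability} --- equivalently, recovering the pressure pointwise in time from \Cref{thm:mixed_Stokes_h2} via the decomposition \eqref{eq:Hperp} --- then gives $\uu\in\W$ and the unique pressure $p_\uu\in L^2(I;H^1(\Om))$ satisfying the strong form. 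For the pressure bound I would use $\|p_\uu(t)\|_{H^1(\Om)}\le C\|\ff(t)-\partial_t\uu(t)-(\uu\cdot\nabla)\uu(t)\|_{L^2(\Om)}$ for a.e.\ $t$, integrate in time, and insert $\|\partial_t\uu\|_{L^2(I;L^2(\Om))}\le\|\uu\|_{H^1(I;L^2(\Om))}$ together with the product bound $\|(\uu\cdot\nabla)\uu\|_{L^2(I;L^2(\Om))}\le C\|\uu\|_{L^2(I;H^2(\Om))}\|\uu\|_{H^1(I;L^2(\Om))}$ from the preceding interpolation; this reproduces exactly the four terms in the stated pressure estimate.

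The hard part will be the rigorous justification of the formal test with $A\uu$, since a priori $\uu(t)\in\mathcal D(A)$ is not known --- this is where the $H^2$ regularity is being \emph{created} rather than used. I would make this rigorous either through a Galerkin scheme built on a basis of $\mathcal D(A)$ (deriving the enstrophy estimate uniformly for the approximations, passing to the limit using the compact embedding \eqref{eq:compact_imbedding} to treat the nonlinearity, and identifying the limit with $\uu$ by the uniqueness of \Cref{thm:uniqueness_stateeq}), or directly on $\uu$ via difference quotients in time. A secondary technical point is the $L^4$ control of $\nabla\uu$, which carries no boundary condition and hence cannot use \eqref{eq:Lp_interpolation_estimate} directly; I would instead route it through $H^{1/2}(\Om)\hookrightarrow L^4(\Om)$ and \Cref{thm:gagliardo_nirenberg}, exactly as in the proof of \Cref{thm:trace_interpolation_2}. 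Keeping careful track so that the final constants depend only on the advertised norms $\|\uu\|_{L^2(I;\V)}$ and $\|\uu\|_{L^\infty(I;\Hspace)}$ is then the remaining bookkeeping.
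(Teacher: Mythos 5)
Your proposal is correct and follows essentially the same route as the paper, whose proof simply adapts \cite[Ch.~3, Theorem 3.10]{Temam1977} (testing with $A\uu$, the Gronwall enstrophy estimate made rigorous by a Galerkin scheme on $\mathcal D(A)$, and the mixed-boundary $H^2$ regularity of \Cref{thm:mixed_Stokes_h2}) and then recovers the pressure by bootstrapping once $(\uu\cdot\nabla)\uu\in L^2(I;L^2(\Om)^2)$ is known. You have in fact supplied more detail than the paper, including the correct observation that the dangerous boundary term never appears when the convective term is paired with $A\uu$.
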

\begin{proof}
   The result for $\uu$ can be shown following precisely the same steps as 
   \cite[Ch.3, Theorem 3.10]{Temam1977}, using the $H^2$-regularity result of 
   \Cref{thm:mixed_Stokes_h2}.
   \revD{The regularity of $p_\uu$ can be obtained by a bootstrapping argument similar to the one 
     in \Cref{prop:instat_stokes_solvability}, where the term $\|(\uu \cdot \nabla)\uu\|_{L^2(I;L^2(\Om))}$ can be estimated using \cite[Theorem 3.8]{vexler_error_2024}.
   }
\end{proof}
Note that the possible non-existence of solutions is due to the fact, that we do not have a bound
of $\|\uu\|_{L^2(I;\V)}$ and $\|\uu\|_{L^\infty(I;\Hspace)}$ purely depending on the data.
As these terms arise on the right hand side of the estimate shown in \Cref{thm:NS_H2},
in order to apply this result \revD{to obtain boundedness of a minimizing sequence,}
we need not only assume 
the existence of solutions, but also provide a bound for the norms in question.

While existence of weak solutions is in general only shown for small times and/or data, 
\cite[Theorem 4.1]{benes_solutions_2016} show that if a solution exists for some data,
then for small perturbations of the data, the equations remain solvable.
Note that, while the authors of \cite{benes_solutions_2016} assume the existence 
of a solution in $\W$,
\Cref{thm:NS_H2} shows that it suffices to 
assume existence of a weak solution in $\VV$,
as the $H^2$ regularity can then be deduced from the data.
\begin{proposition}\label{prop:open_dataset_H2}
  Let $\ff \in L^2(I;L^2(\Omega)^2)$ and $\uu_0 \in \V$ such that a weak solution 
  $\uu \in \VV$ to \eqref{eq:NS_DN} 
  exists. Then there exist $\varepsilon, \revD{\delta} > 0$, such that for all 
  $\tilde \ff \in L^2(I;L^2(\Omega)^2)$ and $\tilde \uu_0 \in \V$, satisfying
  \begin{equation*}
    \| \tilde \ff - \ff \|_{L^2(I;L^2(\Omega))} + \|\tilde \uu_0 - \uu_0\|_\V \le \varepsilon,
  \end{equation*}
  there exists a solution $\tilde \uu \in \W$ 
  to \eqref{eq:NS_DN} with data $\tilde \ff$ and $\tilde \uu_0$, \revD{satisfying}
  \begin{equation*}
    \revD{
    \|\tilde \uu - \uu\|_{L^2(I;H^2(\Om))} + 
    \|\tilde \uu - \uu\|_{H^1(I;L^2(\Om))} \le \delta.
  }
  \end{equation*}
\end{proposition}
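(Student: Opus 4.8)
The plan is to reduce the statement to the perturbation result \cite[Theorem 4.1]{benes_solutions_2016}. As the remark preceding the proposition already signals, that reference establishes exactly the kind of local persistence of solutions we require, but under the stronger hypothesis that the reference solution lies in $\W$. The only gap to close is therefore the regularity of the assumed solution: here we are given merely $\uu \in \VV$, whereas \cite[Theorem 4.1]{benes_solutions_2016} expects a solution in $\W$.

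First I would invoke \Cref{thm:NS_H2}. Since the data satisfy $\ff \in L^2(I;L^2(\Omega)^2)$ and $\uu_0 \in \V$, and since $\uu \in \VV$ is by assumption a weak solution of \eqref{eq:NS_DN}, that theorem upgrades the regularity to $\uu \in \W$ and supplies the accompanying $L^2(I;H^2(\Omega)) \cap H^1(I;L^2(\Omega))$ bound. This is precisely the content of the preceding remark: the $H^2$-in-space, $H^1$-in-time regularity is not an additional assumption but a consequence of the data regularity together with the stationary result \Cref{thm:mixed_Stokes_h2}.

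With $\uu \in \W$ in hand, \cite[Theorem 4.1]{benes_solutions_2016} applies directly and yields radii $\varepsilon, \delta > 0$ such that every admissible perturbation $(\tilde \ff, \tilde \uu_0)$ with $\|\tilde \ff - \ff\|_{L^2(I;L^2(\Omega))} + \|\tilde \uu_0 - \uu_0\|_\V \le \varepsilon$ admits a solution $\tilde \uu \in \W$ of \eqref{eq:NS_DN} for the perturbed data, obeying the stated closeness estimate in the $\W$ norm; this is the assertion to be proved.

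Finally, a word on where the genuine difficulty sits, should one wish to reprove the cited theorem rather than quote it. The mechanism is the standard one: linearize the map $\uu \mapsto \partial_t \uu - \Delta \uu + (\uu\cdot\nabla)\uu + \nabla p_\uu - \ff$ at the reference solution and run an implicit-function / Banach fixed-point argument in $\W$. The linearization is the Oseen operator $\delta \uu \mapsto \partial_t \delta \uu - \Delta \delta \uu + (\uu\cdot\nabla)\delta \uu + (\delta \uu\cdot\nabla)\uu + \nabla \delta p$, and the crux is to show it is an isomorphism onto the data space together with a maximal-regularity estimate of the type recorded in \Cref{prop:instat_stokes_solvability}. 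The hard input is exactly this maximal $L^2(I;H^2(\Omega)) \cap H^1(I;L^2(\Omega))$ regularity for the linear problem with mixed Dirichlet/Do-Nothing conditions on a domain with right-angle corners, the nonsmooth mixed-boundary situation that \Cref{thm:mixed_Stokes_h2} and \cite{benes_solutions_2016} are designed to handle; the convection terms $(\uu\cdot\nabla)\delta \uu$ and $(\delta \uu\cdot\nabla)\uu$ are then lower-order perturbations, controllable through \eqref{eq:trilinear_l4_est} and the embeddings \eqref{eq:L4L4_imbeddings}.
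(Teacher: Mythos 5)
Your proposal follows essentially the same route as the paper: upgrade the given weak solution from $\VV$ to $\W$ via \Cref{thm:NS_H2} (the paper states this in the remark immediately preceding the proposition) and then invoke \cite[Theorem 4.1]{benes_solutions_2016} for the perturbation statement. Your closing discussion of the linearization/implicit-function mechanism behind the cited theorem is accurate but not needed.

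One small step is missing, and it happens to be the only substantive content of the paper's written proof: \cite[Theorem 4.1]{benes_solutions_2016} is stated for right-hand sides in $L^2(I;\Hspace)$, i.e.\ for solenoidal forcing, whereas the proposition allows general $\ff,\tilde\ff \in L^2(I;L^2(\Omega)^2)$. So the theorem does not apply ``directly'' as you claim. The paper bridges this by using the Helmholtz decomposition $L^2(\Omega)^2 = \Hspace \oplus \Hspace^\perp$ from \eqref{eq:Hperp} together with the orthogonal projection $\mathbb P\colon L^2(\Omega)^2 \to \Hspace$: since the weak formulation \eqref{eq:NS_DN} only tests against $\vv \in \V \subset \Hspace$, the component of $\ff$ in $\Hspace^\perp$ is invisible, so $\mathbb P\ff$ produces the same solution as $\ff$ and the cited theorem can be applied to the projected data. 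You should add this reduction; without it the application of the external result is not justified for the stated class of right-hand sides.
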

\begin{proof}
  The result for $\ff,\tilde \ff \in L^2(I;\Hspace)$ is presented in \cite[Theorem 4.1]{benes_solutions_2016}.
  With the characterization of $L^2(\Omega)^2 = \Hspace \oplus \Hspace^\perp$ presented in 
  \eqref{eq:Hperp},
  and the orthogonal projection $\mathbb P\colon L^2(\Omega)^2 \to \Hspace$, we further know that
  a right hand side $\mathbb P \ff$ yields the same solution $\uu$ as $\ff$, which concludes the proof.
\end{proof}
This shows, that the set of data, for which \revD{a solution in $\W$} exists,
is an open subset of $L^2(I;L^2(\Omega)^2) \times \V$.
Note however, that again the above result does not contain a bound for $\|\tilde \uu\|_{L^2(I;\V)}$ and
$\|\tilde \uu\|_{H^1(I;\Hspace)}$ in terms of norms of the data.

Up to now, we have considered only problems with homogeneous boundary data.
To conclude this section, we show that all results derived previously, can be also extended to the 
type of inhomogeneous boundary data under consideration in this work.
We achieve this, by transforming \eqref{eq:NS_fullData} into a formulation where 
no boundary data terms arise.
Here we will \revC{use the fact,} that the controls of \eqref{eq:minProb_1} are purely time dependent.
\begin{lemma}\label{lemm:boundary_to_rhs}
  Let $\uu_0 \in \V$, $\ff \in L^2(I;L^2(\Omega)^2)$ and $q_i \in L^2(I)$, $i=1,...,L$. Then 
  there exist functions \revA{$\zeta_i \in C^\infty(\Omega)$},
  $i=1,...,L$, such that $\uu \in \W$ solves
  \eqref{eq:NS_fullData} if and only if $\uu$ solves the following
\begin{equation}\label{eq:NS_onlyRhs}
  \left\{
  \begin{aligned}
    \Oprod{\partial_t \uu,\vv} + \Oprod{\nabla \uu,\nabla \vv} + \Oprod{(\uu\cdot \nabla)\uu,\vv}
    &= \Oprod{\ff - \sum_{i=1}^L q_i \nabla \zeta_i,\vv}
    \quad \textforall \vv \in \V,  \alev \ t \in I, \\
    \uu(0) &= \uu_0.
  \end{aligned}
  \right.
\end{equation}
\end{lemma}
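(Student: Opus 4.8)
The plan is to remove the boundary forcing $\sum_{i=1}^L q_i(t)\Gprodj{\vv,\nn}{N,i}$ by representing it, for \emph{fixed} $t$, as a volume term $\Oprod{\sum_i q_i(t)\nabla\zeta_i,\vv}$ tested against the divergence-free field $\vv$. Since the controls are constant in space, $q_i(t)$ factors out of the boundary integral, so it suffices to find one time-independent scalar function $\zeta_i$ per Neumann component with
\begin{equation*}
   \Oprod{\nabla \zeta_i, \vv} = \Gprodj{\vv, \nn}{N,i} \quad \textforall \vv \in \V.
\end{equation*}
Once such $\zeta_i$ are available, summing against $q_i(t)$ shows that the right-hand sides of \eqref{eq:NS_fullData} and \eqref{eq:NS_onlyRhs} agree as functionals on $\V$ for a.e.\ $t$, which is the whole content of the lemma.

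First I would establish the displayed identity by Gauss--Green. For $\vv \in \V \subset H^1(\Omega)^2$ and a scalar $\zeta_i$ smooth up to the boundary,
\begin{equation*}
   \Oprod{\nabla \zeta_i, \vv} = -\Oprod{\zeta_i, \nabla\cdot\vv} + \Gprod{\zeta_i, \vv\cdot\nn}.
\end{equation*}
The volume term drops because $\nabla\cdot\vv = 0$, and on $\Gamma_D$ the boundary term drops because $\vv|_{\Gamma_D} = \oo$; hence $\Oprod{\nabla\zeta_i,\vv} = \sum_{j=1}^L \Gprodj{\zeta_i\,\vv,\nn}{N,j}$. The identity therefore holds as soon as the boundary trace of $\zeta_i$ satisfies $\zeta_i|_{\Gamma_{N,j}} = \delta_{ij}$. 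To construct such $\zeta_i$ I would use the geometric input of \Cref{ass:domain}: the closed Neumann components $\overline{\Gamma_{N,j}}$ are pairwise disjoint compact sets (consecutive open segments being separated by Dirichlet portions, as transition points join $\Gamma_D$ and $\Gamma_N$). Choosing pairwise disjoint open neighborhoods and standard cutoff (bump) functions yields $\zeta_i$ as the restriction to $\overline{\Omega}$ of a function in $C^\infty_c(\R^2)$ with $\zeta_i \equiv 1$ near $\Gamma_{N,i}$ and $\zeta_i \equiv 0$ near the remaining Neumann components, so that in particular $\zeta_i|_{\Gamma_{N,j}} = \delta_{ij}$.

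To close the argument I would note that $\nabla\zeta_i \in L^\infty(\Omega)^2$, so $\sum_i q_i\nabla\zeta_i \in L^2(I;L^2(\Omega)^2)$ for $q_i \in L^2(I)$, and that for $\uu\in\W$ one has $\partial_t\uu \in L^2(I;\Hspace)$, whence $\Opair{\partial_t\uu,\vv} = \Oprod{\partial_t\uu,\vv}$. Both formulations then share the same left-hand side, the same right-hand functional on $\V$ for a.e.\ $t$, and the same initial condition $\uu(0)=\uu_0$, giving the asserted equivalence. I expect the only genuine obstacle to be the geometric construction of $\zeta_i$ — specifically, ensuring that the Neumann components have disjoint closures so that a single smooth function can realize the boundary trace $\delta_{ij}$; everything else is a one-line integration by parts. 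It is worth stressing that the pure time-dependence of the controls is precisely what makes this lifting time-independent and elementary: a spatially varying $q_i$ could not be pulled out of the boundary integral and would force a genuinely $t$- and $\xx$-dependent lifting.
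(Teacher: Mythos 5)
Your proposal is correct and follows essentially the same route as the paper: a single integration by parts of $\zeta_i\vv$ combined with $\nabla\cdot\vv=0$ and $\vv|_{\Gamma_D}=\oo$, reducing the whole lemma to constructing smooth, time-independent $\zeta_i$ with $\zeta_i|_{\Gamma_{N,j}}=\delta_{ij}$ via cutoffs supported in pairwise disjoint neighborhoods of the Neumann components. The only cosmetic difference is that the paper multiplies its cutoff $\chi_i$ by the affine factor $(\nn_i\cdot\xx)-\beta_i+1$, which additionally yields $\nabla\zeta_i|_{\Gamma_{N,j}}=\delta_{ij}\,\nn_i$ on the boundary; that extra property is not needed for the equivalence of the two weak formulations, so your bare-cutoff construction suffices.
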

\begin{proof}
  Due to \Cref{ass:domain}, for every $i=1,...,L$, there exist two nested neighborhoods, 
  $\mathcal U_i, \widetilde{\mathcal U}_i \revB{\subset \R^2}$ such that
  $\Gamma_{N,i} \subset \widetilde{\mathcal U}_i \subset \overline{\widetilde{\mathcal U}}_i
  \subset \mathcal U_i$
  and
  $\Gamma_N \cap \mathcal U_i = \Gamma_{N,i}$. Define smooth functions $\chi_i\colon\R^2 \to \revB{[0,1]}$ such that 
  $\chi_i(\xx) = 0$ for all $\xx \in \R^2 \backslash \mathcal U_i$ and 
  $\chi_i(\xx) = 1$ for all $\xx \in \widetilde{\mathcal U}_i$.
  Note that this implies $\nabla \chi_i = 0$ in $\widetilde{\mathcal U}_i$.
  Denote the constant normal vector of $\Gamma_{N,i}$ by $\nn_i$ and let $\beta_i := \nn_i \cdot \xx_{0,i}$ for 
  some arbitrary $\xx_{0,i} \in \Gamma_{N,i}$.
  \revB{Then $\nn_i \cdot \xx = \beta_i$ for all $\xx \in \GNi$.}
  Consider the function 
  \begin{equation*}
    {\zeta}_i\colon \R^2 \to \R, \
    {\zeta}_i(\xx) := \chi_i(\xx)\cdot ((\nn_i\cdot \xx)-\beta_i + 1).
  \end{equation*}
  For all $j=1,...,L$ and $\xx \in \Gamma_{N,j}$, it holds
  $\revD{\zeta_i(\xx)|_{\Gamma_{N,j}}} = \delta_{i,j}$ and
  $\revD{\nabla \zeta_i(\xx)|_{\Gamma_{N,j}}} = \delta_{i,j} \nn_i$,
  where by $\delta_{i,j}$ we denote the Kronecker delta.
  With this, using $\vv|_{\Gamma_D} = \oo$ for all $\vv \in \V$, we can write
  \begin{equation*}
    \Gprodj{\vv,\nn}{N,i} 
    \revB{= (\vv,\zeta_i \nn)_{\Gamma_N}}
    = (\vv,\zeta_i \nn)_{\partial \Omega} 
    = \Oprod{\nabla \cdot \vv,\zeta_i} + \Oprod{\vv, \nabla \zeta_i}
    = \Oprod{\vv,\nabla \zeta_i} \quad \textforall \vv \in \V,
  \end{equation*}
  which concludes the proof.
\end{proof}
\begin{remark}
      Note that, despite the right hand side terms $q_i \nabla \zeta_i$ being gradients of smooth functions,
      as $\zeta_i|_{\GNi} = 1$, it holds $q_i \nabla \zeta_i \not \in \Hspace^\perp$, due to 
      the characterization of the latter space derived in \eqref{eq:Hperp}.
   \end{remark}
\begin{remark}
   While \Cref{lemm:boundary_to_rhs} shows, that the two formulations 
   \eqref{eq:NS_fullData} and \eqref{eq:NS_onlyRhs} yield the same velocity solution $\uu$, they lead to 
   different associated pressures.
Testing \eqref{eq:NS_onlyRhs} especially with any $\vv \in \mathring{\V} \subset \V$,
\revD{where $\mathring{\V}$ is defined in \eqref{eq:def_Vcirc},}
yields for a.e. $t \in I$
   \begin{equation*}
      \Oprod{\partial_t \uu - \Delta \uu + (\uu\cdot \nabla \uu),\vv} 
      = \Oprod{\ff - \sum_{i=1}^L q_i \nabla \zeta_i ,\vv}.
   \end{equation*}
   \revB{As functions from $\mathring{\V}$ vanish on the whole $\partial \Om$, by}
   \cite[Chapter I, Proposition 1.2]{Temam1977}, 
   there exists $\revB{\tilde p_\uu} \in L^2(I;H^1(\Omega))$ such that 
   \begin{equation*}
      \partial_t \uu - \Delta \uu + (\uu\cdot \nabla \uu) + \nabla \revB{\tilde p_\uu} 
      = \ff - \sum_{i=1}^L q_i \nabla \zeta_i,
   \end{equation*}
   which is understood as an equality in $L^2(I;L^2(\Omega)^2)$.
   Subtracting \eqref{eq:NS_onlyRhs} from this equation tested with any $\vv \in \V$ gives for a.e. $t \in I$
   \begin{equation*}
      0 = \Oprod{- \Delta \uu ,\vv} + \Oprod{\nabla \revB{\tilde p_\uu},\vv} - \Oprod{\nabla \uu,\nabla \vv}
      = \Gprod{\revB{\tilde p_\uu} \nn - \partial_\nn \uu,\vv} - \Oprod{\revB{\tilde p_\uu},\nabla \cdot \vv}
      = \Gprodj{\revB{\tilde p_\uu} \nn - \partial_\nn \uu,\vv}{N}.
   \end{equation*}
   Following the same approach, we obtain from \eqref{eq:NS_fullData}, that 
   there exists $\revB{p_\uu} \in L^2(I;H^1(\Omega))$ such that 
   \begin{equation*}
      \partial_t \uu - \Delta \uu + (\uu\cdot \nabla \uu) + \nabla \revB{p_\uu} = \ff.
   \end{equation*}
   Subtracting \eqref{eq:NS_fullData} from this equation tested with $\vv \in \V$ yields for a.e. $t \in I$
   \begin{equation*}
      \Gprodj{\sum_{i=1}^L q_i \nn,\vv}{N}
      = \Oprod{- \Delta \uu ,\vv} + \Oprod{\nabla \revB{p_\uu},\vv} - \Oprod{\nabla \uu,\nabla \vv}
      = \Gprod{ \revB{p_\uu} \nn - \partial_\nn \uu,\vv} - \Oprod{\revB{p_\uu},\nabla \cdot \vv}
      = \Gprodj{ \revB{p_\uu} \nn - \partial_\nn \uu,\vv}{N}.
   \end{equation*}
   As the solution $\uu$ is the same for both formulations, this shows that only the associated pressures
   differ by $\sum_{i=1}^L q_i \zeta_i$.
   In fact, equation \eqref{eq:NS_onlyRhs} is the weak formulation to
   \begin{equation*}
      \left\{
         \begin{aligned}
            \partial_t \uu - \Delta \uu + (\uu\cdot \nabla) \uu + \nabla \revB{\tilde p_\uu} 
            &= \ff && \text{in } I \times \Om,\\
            \nabla \cdot \uu &= 0 && \text{in } I \times \Om,\\
            \uu(0) &= \uu_0 && \text{in } \Om,\\
            \uu &= \oo && \text{on } I \times \Gamma_D,\\
            \revB{\tilde p_\uu} \nn - \partial_\nn \uu &= \oo && \text{on } I \times \Gamma_N,
         \end{aligned}
         \right.
   \end{equation*}
   where we have defined the modified pressure $\tilde p_\uu = p_\uu - \sum_{i=1}^L q_i \zeta_i$.
   \revD{As $\tilde p_\uu$ is the associated pressure to \eqref{eq:NS_onlyRhs}, which is formulated
   with homogeneous boundary conditions, the result of \Cref{thm:NS_H2} is applicable, and it holds
   \begin{equation*}
      \|\tilde p_\uu\|_{L^2(I;H^1(\Om))} \le 
      C \left(
        \|\uu\|_{L^2(I;H^2(\Om))} + \|\uu\|_{H^1(I;L^2(\Om))} 
        + \|\uu\|_{L^2(I;H^2(\Om))}\|\uu\|_{H^1(I;L^2(\Om))}
      + \|\ff\|_{L^2(I;L^2(\Om)^2)} + \|q\|_{L^2(I)}\right).
   \end{equation*}
   Due to the relationship between $p_\uu$ and $\tilde p_\uu$, a similar bound also holds for $p_\uu$.
   Note that this estimate cannot be stated purely in terms of data, as the norms of $\uu$ cannot a priori be 
   bounded in terms of the data $q,\ff$ and $\uu_0$.}
\end{remark}
Concluding the considerations on existence \revD{and regularity} of solutions, let us now turn toward the question of uniqueness.
We shall see that this question is much easier to discuss, as there holds the following.
\begin{theorem}\label{thm:uniqueness_stateeq}
   Let $\uu \in \VV$ solve \eqref{eq:NS_fullData} \revB{for given $q \in L^2(I)^L, \uu_0 \in \Hspace$.}
   Then \revB{$\uu$} is the unique solution of \eqref{eq:NS_fullData} in the space $\VV$.
\end{theorem}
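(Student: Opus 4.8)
The plan is to run a standard difference-of-solutions energy estimate, where the only genuine difficulty is the boundary term produced by the nonlinearity, which does not vanish as it would in the pure Dirichlet case. Suppose $\uu_1,\uu_2 \in \VV$ both solve \eqref{eq:NS_fullData} for the same data $q \in L^2(I)^L$ and $\uu_0 \in \Hspace$, and set $\ee := \uu_1 - \uu_2$. Subtracting the two weak formulations, the linear boundary contribution $-\sum_{i} q_i(t)\Gprodj{\vv,\nn}{N,i}$ cancels since the data coincide, and with the algebraic identity $(\uu_1\cdot\nabla)\uu_1 - (\uu_2\cdot\nabla)\uu_2 = (\uu_1\cdot\nabla)\ee + (\ee\cdot\nabla)\uu_2$ I obtain, for a.e.\ $t \in I$ and all $\vv \in \V$,
\begin{equation*}
  \Opair{\partial_t \ee, \vv} + \Oprod{\nabla \ee, \nabla \vv} + \Oprod{(\uu_1\cdot\nabla)\ee, \vv} + \Oprod{(\ee\cdot\nabla)\uu_2, \vv} = 0,
\end{equation*}
together with $\ee(0) = \oo$. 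Since $\ee \in \VV \hookrightarrow C(\bar I;\Hspace)$ by \eqref{eq:space_VV_definition}, the map $t \mapsto \|\ee(t)\|_{L^2(\Om)}^2$ is absolutely continuous and testing with $\vv = \ee$ is admissible.

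Testing with $\ee$ and applying the trilinear identity \eqref{eq:trilinear_boundary_term} to the transport term, which gives $\Oprod{(\uu_1\cdot\nabla)\ee,\ee} = \tfrac12 \Gprodj{\uu_1\cdot\nn,|\ee|^2}{N}$, yields the energy identity
\begin{equation*}
  \frac{\mathrm{d}}{\mathrm{d}t}\,\tfrac12\|\ee\|_{L^2(\Om)}^2 + \|\nabla \ee\|_{L^2(\Om)}^2 = -\Oprod{(\ee\cdot\nabla)\uu_2,\ee} - \tfrac12 \Gprodj{\uu_1\cdot\nn,|\ee|^2}{N}.
\end{equation*}
The interior term is routine: by \eqref{eq:trilinear_l4_est} it is bounded by $C\|\ee\|_{L^2(\Om)}\|\nabla\ee\|_{L^2(\Om)}\|\nabla\uu_2\|_{L^2(\Om)}$, and Young's inequality absorbs a factor $\tfrac14\|\nabla\ee\|_{L^2(\Om)}^2$ leaving the coefficient $C\|\nabla\uu_2\|_{L^2(\Om)}^2 \in L^1(I)$. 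The boundary term is the obstruction absent in the Dirichlet setting, and controlling it is the heart of the argument.

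To estimate the boundary term I use Hölder on $\Gamma_N$, writing $|\Gprodj{\uu_1\cdot\nn,|\ee|^2}{N}| \le \|\uu_1\|_{L^2(\Gamma_N)}\|\ee\|_{L^4(\Gamma_N)}^2$, and then the two trace estimates available to us: \eqref{eq:trace_interpolation_1} with $p=2$ gives $\|\uu_1\|_{L^2(\partial\Om)} \le C\|\uu_1\|_{H^1(\Om)}^{1/2}\|\uu_1\|_{L^2(\Om)}^{1/2}$, while \Cref{thm:trace_interpolation_2} with $p=4$ gives $\|\ee\|_{L^4(\partial\Om)}^2 \le C\|\ee\|_{H^1(\Om)}^{3/2}\|\ee\|_{L^2(\Om)}^{1/2}$. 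Since functions in $\V$ vanish on $\Gamma_D$, the Poincaré–Friedrichs inequality gives $\|\ee\|_{H^1(\Om)} \le C\|\nabla\ee\|_{L^2(\Om)}$, so Young's inequality with exponents $4/3$ and $4$ absorbs the factor $\|\nabla\ee\|_{L^2(\Om)}^{3/2}$ into $\tfrac14\|\nabla\ee\|_{L^2(\Om)}^2$ and leaves the coefficient $C\|\uu_1\|_{H^1(\Om)}^2\|\uu_1\|_{L^2(\Om)}^2$. I expect the main obstacle to lie precisely here: the naive trace bound $\|\uu_1\|_{L^2(\Gamma_N)} \le C\|\uu_1\|_{H^1(\Om)}$ would produce a coefficient $\|\uu_1\|_{H^1(\Om)}^4$, which is only $L^{1/2}(I)$ since $\uu_1 \in L^2(I;\V)$; it is the interpolated trace estimate, exploiting the $L^\infty(I;\Hspace)$ bound coming from $\VV \hookrightarrow C(\bar I;\Hspace)$, that makes the coefficient time-integrable.

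Combining the two estimates and absorbing $\tfrac12\|\nabla\ee\|_{L^2(\Om)}^2$ into the left-hand side gives
\begin{equation*}
  \frac{\mathrm{d}}{\mathrm{d}t}\|\ee(t)\|_{L^2(\Om)}^2 \le \eta(t)\,\|\ee(t)\|_{L^2(\Om)}^2, \qquad \eta := C\left(\|\nabla\uu_2\|_{L^2(\Om)}^2 + \|\uu_1\|_{H^1(\Om)}^2\|\uu_1\|_{L^2(\Om)}^2\right).
\end{equation*}
Because $\uu_1,\uu_2 \in L^2(I;\V) \cap L^\infty(I;\Hspace)$, the function $\eta$ belongs to $L^1(I)$. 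With $\ee(0)=\oo$, Gronwall's lemma then forces $\|\ee(t)\|_{L^2(\Om)}^2 \le \|\ee(0)\|_{L^2(\Om)}^2 \exp\!\big(\int_0^t \eta(s)\,\mathrm{d}s\big) = 0$ for all $t \in \bar I$, whence $\uu_1 = \uu_2$ and uniqueness follows.
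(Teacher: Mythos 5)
Your proof is correct and follows essentially the same route as the paper's: subtract the two solutions, split the trilinear term into the interior part and the boundary term $\tfrac12\Gprodj{\uu_1\cdot\nn,|\ee|^2}{N}$, and use the interpolated trace estimates (exploiting the $L^\infty(I;\Hspace)$ bound from $\VV$) so that Young's inequality leaves a time-integrable Gronwall coefficient. The only immaterial difference is the Hölder pairing on the boundary ($L^2\times L^4\times L^4$ here versus $L^3\times L^3\times L^3$ in the paper), which yields the coefficient $\|\uu_1\|_{H^1(\Omega)}^2\|\uu_1\|_{L^2(\Omega)}^2$ instead of $\|\uu_1\|_{H^1(\Omega)}^2\|\uu_1\|_{L^2(\Omega)}$; both lie in $L^1(I)$, so the conclusion is unaffected.
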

\begin{proof}
   Assume there are $\uu_1 = \uu$ and $\uu_2$ two solutions in $\VV$
   to \eqref{eq:NS_fullData}.
   For almost every $t \in I$,
   we then test both equations with $\ee=\uu_1-\uu_2$ and arrive by \revB{standard} calculations at
   \begin{equation*}
      \frac{1}{2} \partial_t \|\ee\|_{L^2(\Omega)}^2 +  \|\nabla \ee\|_{L^2(\Omega)}^2
      + \Oprod{(\uu_1\cdot \nabla)\uu_1 - (\uu_2 \cdot \nabla)\uu_2,\ee} = 0.
   \end{equation*}
   \revB{By inserting an artificial zero and using \eqref{eq:trilinear_boundary_term},
      the trilinear term can be rewritten as
      \begin{align*}
     \Oprod{(\uu_1\cdot \nabla)\uu_1 - (\uu_2 \cdot \nabla)\uu_2,\ee}
     &= \Oprod{(\uu_1\cdot \nabla)\uu_1 - (\uu_1\cdot \nabla )\uu_2 
     + (\uu_1\cdot \nabla)\uu_2 - (\uu_2 \cdot \nabla)\uu_2,\ee}\\
     &= \Oprod{(\uu_1\cdot \nabla) \ee, \ee} + \Oprod{(\ee \cdot \nabla)\uu_2,\ee}
     = \frac12 (\uu_1 \cdot \nn,|\ee|^2)_{\partial \Om} + \Oprod{(\ee \cdot \nabla)\uu_2,\ee}.
      \end{align*}
      Using \eqref{eq:trilinear_l4_est} to estimate the domain integral term,
      and the trace $L^p$ estimate from \Cref{thm:trace_interpolation_2}, together with
      \revD{the extended Poincaré-Steklov inequality, see \cite[Lemma 3.30]{ern_finite_2021_partI},}
      we obtain
   }
   \begin{align*}
     \Oprod{(\uu_1\cdot \nabla)\uu_1 - (\uu_2 \cdot \nabla)\uu_2,\ee}
      &\le C\left(\|\ee\|_{L^2(\Omega)}\|\nabla \ee\|_{L^2(\Omega)} \|\nabla \uu_2\|_{L^2(\Omega)}
      + \|\uu_1\|_{L^3(\partial \Omega)} \|\ee\|_{L^3(\partial \Omega)}^2\right)\\
      &\le C\left(\|\ee\|_{L^2(\Omega)}\|\nabla \ee\|_{L^2(\Omega)} \|\nabla \uu_2\|_{L^2(\Omega)}
      + \|\uu_1\|_{H^1(\Omega)}^{\frac{2}{3}}\|\uu_1\|_{L^2(\Omega)}^{\frac{1}{3}} 
      \|\nabla \ee\|_{L^2(\Omega)}^{\frac{4}{3}}\|\ee\|_{L^2(\Omega)}^{\frac{2}{3}}\right)\\
      & \le  \frac{1}{2} \|\nabla \ee\|_{L^2(\Omega)}^2 
      + C\left(\|\ee\|_{L^2(\Omega)}^2\|\nabla \uu_2\|_{L^2(\Omega)}^2 
      + \|\uu_1\|_{H^1(\Omega)}^{2}\|\uu_1\|_{L^2(\Omega)}\|\ee\|_{L^2(\Omega)}^{2}\right).
   \end{align*}
   In the last step, we have used Young's inequality twice,
   with $\frac{1}{2} + \frac{1}{2} = 1$ and $\frac{2}{3} + \frac{1}{3} = 1$.
   Absorbing the term $\revB{\frac{1}{2}} \|\nabla \ee\|_{L^2(\Omega)}^2$ to the left,
   yields that for almost every $t \in I$, there holds
   \begin{equation*}
      \partial_t \|\ee\|_{L^2(\Omega)}^2 + \|\nabla \ee\|_{L^2(\Omega)}^2
      \le C\left(\|\nabla \uu_2\|_{L^2(\Omega)}^2 + \|\uu_1\|_{H^1(\Omega)}^{2}\|\uu_1\|_{L^2(\Omega)}\right)
      \|\ee\|_{L^2(\Omega)}^{2}.
   \end{equation*}
   \revB{As $\uu_1,\uu_2 \in \VV$, the term 
      $\|\nabla \uu_2\|_{L^2(\Omega)}^2 + \|\uu_1\|_{H^1(\Omega)}^{2}\|\uu_1\|_{L^2(\Omega)}$
   is integrable in time.}
   An application of Gronwall's Lemma, \revD{together with the fact, that $\ee(0)=\oo$,}
   then yields $\ee \equiv \revB{\oo}$.
\end{proof}
As already discussed, the possible non existence of solutions to the Navier-Stokes equations 
is due to the fact, that the \revA{nonlinear} term 
$\Oprod{(\uu \cdot \nabla) \uu,\uu}$ may be unbounded for the unknown quantity $\uu$.
\revA{In \Cref{prop:instat_stokes_solvability}, we observed that this difficulty is not present in the 
  linear Stokes equations.
  Moreover, in \Cref{thm:NS_H2} we have seen that for the Navier-Stokes equations we can obtain improved
  regularity, assuming that we already have some information on the solution.
  These considerations motivate the definition of the following linearized problem,
  where we assume some given regularity of the linearization point.}
For given $\uu \in L^4(I;L^4(\Omega)^2)$, $\ww_0 \in \Hspace$ and $q_i \in L^2(I)$, $i=1,...,L$,
we search for
$\ww \in \VV$ solving
\begin{equation}\label{eq:linearized_NS}
  \left\{
  \begin{aligned}
     \revB{\Opair{\partial_t \ww,\vv}} +  \Oprod{\nabla \ww,\nabla \vv} + \Oprod{(\uu\cdot \nabla \ww),\vv}
  &= - \sum_{i=1}^L q_i \Gprodj{\vv,\nn}{N,i} \quad \textforall \vv \in \V,  \alev t \in I,\\
    \ww(0) &= \ww_0.
\end{aligned}\right.
\end{equation}
\begin{theorem}\label{thm:linearized_NS}
  Let $\uu \in L^4(I;L^4(\Omega)^2)$, $\ww_0 \in \Hspace$ and $q_i \in L^2(I)$, $i=1,...,L$.
  Then problem \eqref{eq:linearized_NS}
  possesses a unique solution $\ww \in \VV$
  with an estimate
  \begin{equation*}
    \revB{\|\partial_t \ww\|_{L^2(I;\V^*)}+}
    \|\ww\|_{L^2(I;\V)} + \|\ww\|_{L^\infty(I;\Hspace)} \le C(\|\uu\|_{\revB{L^4(I\times \Om)}})
    \left[\|\ww_0\|_{L^2(\Omega)} + \|q\|_{L^2(I)}\right],
  \end{equation*}
  where the constant $C$ depends nonlinearly on $\revD{\|\uu\|_{L^4(I\times \Om)}}$.
\end{theorem}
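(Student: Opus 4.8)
The plan is to treat \eqref{eq:linearized_NS} as a linear parabolic equation with coercive principal part $\Oprod{\nabla\cdot,\nabla\cdot}$ and time dependent lower order perturbation $\ww\mapsto(\uu\cdot\nabla)\ww$, and to solve it by a Faedo--Galerkin scheme combined with energy estimates. First I would remove the boundary functional on the right hand side exactly as in \Cref{lemm:boundary_to_rhs}: since $\Gprodj{\vv,\nn}{N,i}=\Oprod{\vv,\nabla\zeta_i}$ for $\vv\in\V$, the right hand side becomes $\Opair{\ff,\vv}$ with $\ff:=-\sum_{i=1}^L q_i\nabla\zeta_i$, and because the $\zeta_i$ are fixed this gives $\|\ff(t)\|_{\V^*}\le C|q(t)|$, hence $\ff\in L^2(I;\V^*)$ with $\|\ff\|_{L^2(I;\V^*)}\le C\|q\|_{L^2(I)}$. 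I would then fix the eigenbasis $\{\phi_k\}_{k\in\N}$ of the Stokes operator $A$ from \eqref{eq:def_Stokes_operator}, which exists because $\V\hookrightarrow\Hspace$ is compact by \eqref{eq:compact_imbedding}, project onto $\V_m:=\mathrm{span}\{\phi_1,\dots,\phi_m\}$, and solve the resulting linear system of ODEs for the coefficients of $\ww_m$. Its coefficients depend measurably on $t$ and are integrable (the entries $\Oprod{(\uu\cdot\nabla)\phi_j,\phi_k}$ are bounded by $C\|\uu\|_{L^4(\Om)}\in L^4(I)$), so Carath\'eodory's theorem provides a unique absolutely continuous $\ww_m$ on $\bar I$.

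The core of the argument is the a priori estimate. Testing the Galerkin equation with $\ww_m$ yields
\[ \tfrac12\tfrac{d}{dt}\|\ww_m\|_{L^2(\Om)}^2 + \|\nabla\ww_m\|_{L^2(\Om)}^2 + \Oprod{(\uu\cdot\nabla)\ww_m,\ww_m} = \Opair{\ff,\ww_m}. \]
Because $\uu$ carries only $L^4$ regularity, the boundary identity \eqref{eq:trilinear_boundary_term} is unavailable; instead I bound the trilinear term by H\"older's inequality and \eqref{eq:Lp_interpolation_estimate} with $p=4$, obtaining $|\Oprod{(\uu\cdot\nabla)\ww_m,\ww_m}|\le\|\uu\|_{L^4(\Om)}\|\nabla\ww_m\|_{L^2(\Om)}\|\ww_m\|_{L^4(\Om)}\le C\|\uu\|_{L^4(\Om)}\|\nabla\ww_m\|_{L^2(\Om)}^{3/2}\|\ww_m\|_{L^2(\Om)}^{1/2}$, and absorb the gradient by Young's inequality at the price of a term $C\|\uu\|_{L^4(\Om)}^4\|\ww_m\|_{L^2(\Om)}^2$. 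Treating $\Opair{\ff,\ww_m}$ with the extended Poincar\'e--Steklov inequality and Young, I arrive at $\tfrac{d}{dt}\|\ww_m\|_{L^2(\Om)}^2+\|\nabla\ww_m\|_{L^2(\Om)}^2\le C\|\uu\|_{L^4(\Om)}^4\|\ww_m\|_{L^2(\Om)}^2+C\|\ff\|_{\V^*}^2$. Since $t\mapsto\|\uu(t)\|_{L^4(\Om)}^4$ is integrable with integral $\|\uu\|_{L^4(I\times\Om)}^4$, Gronwall's lemma gives the $L^\infty(I;\Hspace)$ bound with a constant of the form $\exp\!\big(C\|\uu\|_{L^4(I\times\Om)}^4\big)$ --- exactly the nonlinear dependence on $\|\uu\|_{L^4(I\times\Om)}$ asserted --- and integrating in time gives the $L^2(I;\V)$ bound, both uniform in $m$.

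Passing to the limit is where the linearity helps: the uniform bounds give $\ww_m\rightharpoonup\ww$ in $L^2(I;\V)$ and weakly-$*$ in $L^\infty(I;\Hspace)$ along a subsequence, and since $\ww\mapsto(\uu\cdot\nabla)\ww$ is a bounded linear operator from $L^2(I;\V)$ into $L^{4/3}(I;\V^*)$ by the same trilinear bound, I can pass to the limit in the variational identity tested against any fixed $\phi_k$ without invoking Aubin--Lions compactness. The initial condition is recovered from the continuous-in-time embedding of the solution space. Uniqueness follows exactly as in \Cref{thm:uniqueness_stateeq}: the difference of two solutions solves the homogeneous problem with zero initial value, and the energy estimate above together with Gronwall's lemma forces it to vanish.

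The step I expect to be the main obstacle is the bound on $\partial_t\ww$ in the correct space. Reading $\partial_t\ww=\ff-A\ww-(\uu\cdot\nabla)\ww$ off the equation in $\V^*$, and using $\|A\ww\|_{\V^*}\le\|\ww\|_\V$ together with $\|(\uu\cdot\nabla)\ww\|_{\V^*}\le C\|\uu\|_{L^4(\Om)}\|\nabla\ww\|_{L^2(\Om)}$, the delicate factor is the product $\|\uu\|_{L^4(\Om)}\|\nabla\ww\|_{L^2(\Om)}$: with $\|\uu\|_{L^4(\Om)}\in L^4(I)$ and $\|\nabla\ww\|_{L^2(\Om)}\in L^2(I)$ it lies a priori only in $L^{4/3}(I)$, so the low regularity of the linearization point $\uu$ is felt most sharply here. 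Establishing the claimed $H^1(I;\V^*)$ regularity, i.e.\ closing this estimate in $L^2(I;\V^*)$, is the point I would scrutinize most carefully; the interpolation embedding $\VV\hookrightarrow L^4(I\times\Om)^2$ from \eqref{eq:L4L4_imbeddings} and a careful H\"older bookkeeping in time are the tools I would bring to bear to recover it.
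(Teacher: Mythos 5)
Your argument coincides with the paper's proof, which is only a brief sketch: it records exactly your trilinear bound via \eqref{eq:Lp_interpolation_estimate} and Young's inequality, observes that $\|\uu\|_{L^4(\Om)}^4$ is integrable in time so that Gronwall applies, and then states that existence follows by a Galerkin procedure and uniqueness by running the same energy estimate for the difference of two solutions. Everything you add beyond that outline (rewriting the boundary functional as $\Oprod{\vv,\nabla\zeta_i}$, Carath\'eodory solvability of the Galerkin ODE system, passage to the limit using only linearity) is consistent with it and fills in details the paper omits.

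The step you single out as delicate is a genuine issue, and the paper's proof is silent on it. Reading $\partial_t\ww=\ff-A\ww-(\uu\cdot\nabla)\ww$ off the equation, the only available spatial estimate is $\|(\uu\cdot\nabla)\ww\|_{\V^*}\le C\|\uu\|_{L^4(\Om)}\|\nabla\ww\|_{L^2(\Om)}$, since integration by parts to move the derivative off $\ww$ is unavailable for a linearization point that is merely $L^4$ and carries no divergence or trace information. The energy estimate controls $\nabla\ww$ only in $L^2(I;L^2(\Om))$, so this product lies in $L^{4/3}(I)$ and not in $L^2(I)$; the embedding \eqref{eq:L4L4_imbeddings} does not repair this, because it upgrades $\ww$ to $L^4(I;L^4(\Om)^2)$ but not to $L^4(I;\V)$. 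Hence under the stated hypothesis one obtains $\partial_t\ww\in L^{4/3}(I;\V^*)$ rather than the asserted $L^2(I;\V^*)$, and your H\"older bookkeeping is exactly right: closing the claim as written requires either a strengthened hypothesis on the linearization point (for instance $\uu\in L^\infty(I;L^4(\Om)^2)$, which holds for $\uu\in\W$) or a weakening of the time-derivative statement to $W^{1,4/3}(I;\V^*)$. The $L^2(I;\V)$ and $L^\infty(I;\Hspace)$ bounds, which are what the Galerkin--Gronwall argument actually delivers and what the uniqueness proof uses, are unaffected.
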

\begin{proof}
   The proof uses standard arguments based on the Gronwall Lemma.
   For the solution $\ww$, one has to bound the term
   \begin{align*}
      ((\uu \cdot \nabla)\ww,\ww)_\Omega 
      & \le \|\uu\|_{L^4(\Omega)}\|\nabla \ww\|_{L^2(\Omega)} \|\ww\|_{L^4(\Omega)}
      \le C \|\uu\|_{L^4(\Omega)}\|\nabla \ww\|_{L^2(\Omega)}^{\frac{3}{2}} 
         \|\ww\|_{L^2(\Omega)}^{\frac{1}{2}}\\
      & \le \frac{1}{2}\|\nabla \ww\|_{L^2(\Omega)}^2 + C \|\uu\|_{L^4(\Omega)}^4\|\ww\|_{L^2(\Omega)}^2.
   \end{align*}
   \revB{where we have used \eqref{eq:Lp_interpolation_estimate} and Young's inequality.
      By assumption $\|\uu\|_{L^4(\Om)}^4$ is integrable in time. 
   The bound then follows by a standard Gronwall argument. Existence is obtained by a Galerkin procedure.
   Uniqueness is obtained from the stability estimate by replacing $\ww$ with $\ee=\ww_1-\ww_2$,
   the difference between two solutions. }
\end{proof}

\section{Optimal Control Problems}\label{sect:opt_cont}
With the considerations from the previous sections, we can now formulate our optimal control problem.
To overcome the problem of \revD{non-existence} of solutions to the Navier-Stokes equations subject to the 
\eqref{eq:DN} boundary conditions for general data, we restrict the admissible set of controls.
We \revA{will} search only in the set of controls that lead to a weak solution of the Navier-Stokes equations,
which by \Cref{prop:open_dataset_H2} is an open set 
of the space of controls. \revD{In the following, we denote the control space by $Q:= L^2(I)^L$.}
While this remedies the \revB{problem of solvability of the state equation, there remains 
   the lack of an energy estimate of the state in terms of the controls.
   For the optimal control problem, this means that for a minimizing sequence $(q_n,\uu_n)$ 
   with $q_n$ bounded in $Q$, the sequence of states $\uu_n$ could be unbounded in $\VV$.
   However, as we are able to obtain boundedness of $\uu_n$ in the norm of the tracking type term,
   and we have seen in \Cref{thm:linearized_NS}, that a linearized equation is stable if the 
   linearization point is in $L^4(I\times \Om)^2$, this lead us to the choice of a 
   non-standard tracking functional in our objective function, using the $L^4(I \times \Omega)^2$ norm,
   instead of the $L^2(I \times \Omega)^2$ norm.}
%
All in all, we consider the problem
\begin{align}
   \min_{q,\uu}\quad & J(\revB{q,\uu}) := \frac{1}{4} \|\uu-\uu_d\|_{L^4(I\times \Omega)}^4 + 
   \frac{\alpha}{2} \|q\|_{L^2(I)}^2 \label{eq:minProb}\tag{$P$}\\
   \text{s.t.} \quad & q \in \revA{Q} , \uu \in \VV,\notag\\
        &
        \left\lbrace
   \begin{aligned}
      \revB{\Opair{\partial_t \uu,\vv}} + \Oprod{\nabla \uu,\nabla \vv} + \Oprod{(\uu\cdot \nabla)\uu,\vv} &=
     - \sum_{i=1}^L q_i \Gprodj{\nn,\vv}{N,i} \quad \textforall \vv \in \V,  \alev t \in I,\\
      \uu(0) &= \uu_0,\\
   \end{aligned}
 \right. \notag\\
        & \textcolor{black}{q_{a,i} \le q_i \le q_{b,i} \qquad \text{a.e. in } I, \qquad i=1,...,L.} \notag
\end{align}
where $\uu_d \in L^4(I \times \Omega)^2$ is some desired data, 
$q_a,q_b \in [\R \cup \{ \pm \infty\}]^L$ \revB{represent the control constraints} 
with $q_a < q_b$ componentwise, $\alpha > 0$ is a regularization parameter,
\revD{and $\uu_0 \in \Hspace$ are fixed initial data.}
Using the construction presented in \Cref{lemm:boundary_to_rhs}, 
we obtain an equivalent optimization problem
\begin{align}
   \min_{q,\uu}\quad & J(\revB{q,\uu}) := \frac{1}{4} \|\uu-\uu_d\|_{L^4(I\times \Omega)}^4 + 
   \frac{\alpha}{2} \|q\|_{L^2(I)}^2 \label{eq:minProb_3}\tag{$\widehat{P}$}\\
   \text{s.t.} \quad & q \in \revA{Q}, \ \uu \in \VV, \label{eq:opt_prob_feasible_space}\\
        &
        \left\lbrace
   \begin{aligned}
      \revB{\Opair{\partial_t \uu,\vv}} + \Oprod{\nabla \uu,\nabla \vv} + \Oprod{(\uu\cdot \nabla)\uu,\vv} &=
     - \Oprod{\sum_{i=1}^L q_i \nabla \zeta_i,\vv} \quad \textforall \vv \in \V,  \alev t \in I,\\
   \uu(0) &= \uu_0,\\
   \end{aligned}
 \right. \label{eq:stateequation}\\
        & q_{a,i} \le q_i \le q_{b,i} \qquad \text{a.e. in } I, \qquad i=1,...,L. \label{eq:control_constraint}
\end{align}
\subsection{Wellposedness}
We first show, that this is a wellposed problem.
To deduce convergence of minimizing sequences, we need to recall the following,
see \cite[Ch. 3, Theorem 2.3]{Temam1977}
\begin{proposition}\label{thm:compact_imbedding}
  Let $X_0 \overset{c}{\hookrightarrow} X \hookrightarrow X_1$ be three Hilbert spaces with $X_0, \ X_1$ being
  reflexive, then the following embedding is compact:
  \begin{equation*}
    \left[L^2(I;X_0) \cap W^{1,1}(I;X_1)\right] \overset{c}{\hookrightarrow} L^2(I;X).
  \end{equation*}
\end{proposition}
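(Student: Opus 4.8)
The statement is a version of the Aubin--Lions--Simon compactness lemma in the borderline case where the time derivative is only assumed to lie in $L^1(I;X_1)$. The plan is to combine an Ehrling-type interpolation inequality---which reduces everything to compactness in the weaker space $L^2(I;X_1)$---with a Fréchet--Kolmogorov criterion, in which the decisive gain of compactness in time comes from the $W^{1,1}(I;X_1)$ bound. Throughout, let $(u_n)$ be an arbitrary sequence bounded by $M$ in $U := L^2(I;X_0)\cap W^{1,1}(I;X_1)$; the goal is to extract a subsequence converging strongly in $L^2(I;X)$.

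First I would establish Ehrling's lemma: since $X_0 \cimbed X \hookrightarrow X_1$, a standard contradiction argument using the compactness of $X_0 \cimbed X$ yields, for every $\varepsilon>0$, a constant $C_\varepsilon$ with
\[
  \|v\|_X \le \varepsilon \|v\|_{X_0} + C_\varepsilon \|v\|_{X_1} \qquad \forall\, v \in X_0.
\]
Applying this pointwise in $t$ to $u_n-u_m$, squaring and integrating over $I$ gives
\[
  \|u_n-u_m\|_{L^2(I;X)}^2 \le 8\varepsilon^2 M^2 + 2 C_\varepsilon^2 \,\|u_n-u_m\|_{L^2(I;X_1)}^2 .
\]
Hence, once a subsequence is shown to be Cauchy in $L^2(I;X_1)$, letting $n,m\to\infty$ and then $\varepsilon\to 0$ makes that subsequence Cauchy in $L^2(I;X)$, which is complete; this reduces the problem to proving relative compactness of $(u_n)$ in $L^2(I;X_1)$.

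The core step is to verify the hypotheses of the Fréchet--Kolmogorov / Simon compactness criterion (cf.\ \cite{simon_compact_1986}) in $L^2(I;X_1)$. For the uniform smallness of time translates, I would use that for $u \in W^{1,1}(I;X_1)$ one has $u(t+h)-u(t)=\int_t^{t+h}\partial_t u(s)\,\mathrm ds$, so
\[
  \int_0^{T-h}\|u_n(t+h)-u_n(t)\|_{X_1}\,\mathrm dt \le h\,\|\partial_t u_n\|_{L^1(I;X_1)} \le hM .
\]
Since the derivative bound also yields a uniform bound $\|u_n\|_{C(\bar I;X_1)}\le R$ (integrate $\|u_n(t)\|_{X_1}\le \|u_n(s)\|_{X_1}+\|\partial_t u_n\|_{L^1(I;X_1)}$ in $s$ and use $X_0\hookrightarrow X_1$), the $L^1$-in-time estimate upgrades to
\[
  \|u_n(\cdot+h)-u_n(\cdot)\|_{L^2(0,T-h;X_1)}^2 \le 2R\,hM \longrightarrow 0 \quad (h\to 0),
\]
uniformly in $n$. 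For the second hypothesis, the local time-averages satisfy $\big\|\int_{t_1}^{t_2} u_n\,\mathrm dt\big\|_{X_0}\le \sqrt{T}\,M$, so---using that $X_0\cimbed X\hookrightarrow X_1$ makes $X_0\cimbed X_1$ compact---they lie in a relatively compact subset of $X_1$. Simon's criterion then gives relative compactness of $(u_n)$ in $L^2(I;X_1)$, which, combined with the Ehrling reduction, completes the proof.

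The main obstacle is precisely the time-compactness step under the weak hypothesis $\partial_t u \in L^1(I;X_1)$: because the derivative is only $L^1$ in time, one cannot invoke an Arzelà--Ascoli argument directly, and the translation estimate genuinely requires the averaging/upgrade via the uniform $C(\bar I;X_1)$ bound together with the compactness of $X_0\cimbed X_1$. Care is also needed near the temporal endpoints $t=0,T$ when passing between the translation estimate and the compactness criterion, and in checking that all constants ($M$, $R$, $C_\varepsilon$) are uniform in $n$. The Ehrling reduction, by contrast, is routine once the compact embedding $X_0\cimbed X$ is in hand.
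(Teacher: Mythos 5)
Your proof is correct and complete. Note, however, that the paper does not prove this proposition at all: it simply cites \cite[Ch.~3, Theorem~2.3]{Temam1977}. There is a genuine difference worth pointing out between that citation and your argument. Temam's theorem is stated for $v\in L^{\alpha_0}(I;X_0)$ with $\partial_t v\in L^{\alpha_1}(I;X_1)$ and $\alpha_0,\alpha_1>1$, and his proof uses reflexivity to extract weakly convergent subsequences; it does not literally cover the borderline case $\alpha_1=1$ appearing in the statement (i.e.\ $W^{1,1}(I;X_1)$). Your route --- Ehrling's lemma to reduce compactness in $L^2(I;X)$ to compactness in $L^2(I;X_1)$, followed by Simon's Fr\'echet--Kolmogorov-type criterion with the translation estimate
$\int_0^{T-h}\|u_n(t+h)-u_n(t)\|_{X_1}\,\mathrm{d}t\le h\,\|\partial_t u_n\|_{L^1(I;X_1)}$
upgraded to an $L^2$ translation estimate via the uniform $C(\bar I;X_1)$ bound --- is exactly the argument of \cite[Corollary~4]{simon_compact_1986}, which is the correct reference for the $W^{1,1}$ endpoint (and, incidentally, needs no reflexivity). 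All your constants are uniform in $n$, the handling of the endpoints via the interval $(0,T-h)$ matches Simon's hypotheses, and the verification that the local averages $\int_{t_1}^{t_2}u_n\,\mathrm{d}t$ are bounded in $X_0$, hence precompact in $X_1$, is correct. In short: your proof buys the statement as actually written, whereas the paper's citation, taken literally, only buys the case $\partial_t v\in L^{\alpha_1}$ with $\alpha_1>1$.
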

In order to ensure, that the feasible set of the above problem is nonempty, we shall work under the
   following assumption
   \begin{assumption}\label{ass:feasible_point}
      There exists $\qc \in Q$ satisfying $q_{a,i}\le \qc \le q_{b,i}$ a.e. in $I$, 
      $i=1,...,L$, for which \eqref{eq:stateequation} possesses a solution $\uuc \in \VV$.
   \end{assumption}
\begin{remark}
   If \revC{it holds $\uu_0 = \oo$, and}
   $q_a, q_b$ admit the control $q = 0$, then it is easy to check, that in the above assumption, we can use 
   $\qc = 0$ and $\uuc = \oo$. Moreover, if there exists a constant $d$, such that $q_{a,i} \le d \le q_{b,i}$
   for all $i=1,...,L$, setting $\qc_i = d$, we observe
   \begin{equation*}
      \sum_{i=1}^L \Oprod{\qc_i \nabla \zeta_i,\vv}
      = d\sum_{i=1}^L  \Oprod{\nabla \zeta_i,\vv}
      = d\sum_{i=1}^L \dOprod{\zeta_i \nn, \vv} -d\sum_{i=1}^L  \Oprod{\zeta_i,\nabla \cdot \vv}
      = d\dOprod{\nn,\vv}
      = d\Oprod{\nabla \cdot \vv \revB{,1}} = 0,
   \end{equation*}
   for all $\vv \in \V$, where we have used that $\nabla \cdot \vv = 0$, $\zeta_i(x) = \delta_{i,j}$ for
   $x \in \Gamma_{N,j}$ and $\vv = 0$ on $\Gamma_D$. Thus also in this case, it is straightforward to check,
   that $\uuc = \oo$ solves \eqref{eq:stateequation}.
   \revC{If in the above settings $\uu_0 \neq \oo$, then a smallness condition on $\|\uu_0\|_{\Hspace}$ 
   still guarantees existence of $\uuc$.}
\end{remark}

\begin{theorem}\label{thm:wellposedness_ocp}
   Let $(\qc,\uuc)$ satisfy \Cref{ass:feasible_point}, \revB{and let $\uu_0 \in \Hspace$}.
   Then problem \eqref{eq:minProb_3} possesses a 
   global solution $(\qbar,\ubar)\in \revB{Q \times \VV}$, which satisfies the bounds
  \begin{equation*}
     \|\qbar\|_{L^2(I)} \le (2\alpha)^{-\frac12} \|\uuc - \uu_d\|_{L^4(I\times \Om)}^2 + \|\qc\|_{L^2(I)},
     \qquad
     \|\ubar\|_{L^4(I\times \Om)} \le \|\uuc - \uu_d\|_{L^4(I\times \Om)} +
     (2 \alpha)^{\frac14} \|\qc\|_{L^2(I)}^{\frac12} + \|\uu_d\|_{L^4(I\times \Om)},
  \end{equation*}
  \revB{as well as
  \begin{equation*}
     \|\ubar\|_{L^2(I;\V)} + \|\ubar\|_{H^1(I;\V^*)} \le C_1(\uu_d,\alpha,\qc,\uuc,\uu_0,\Omega).
  \end{equation*}
  }
  Moreover, \revB{if $\uu_0 \in \V$, then $\ubar \in \W$ and} there exists a bound 
  \begin{equation*}
    \|\ubar\|_{L^2(I;H^2(\Om))} + \|\ubar\|_{H^1(I;L^2(\Om))} 
    \le \revB{C_2(\uu_d,\alpha,\qc,\uuc,\uu_0,\Omega).}
  \end{equation*}
\end{theorem}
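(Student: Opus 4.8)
The plan is to run the direct method of the calculus of variations, where the essential difficulty is that the standard energy estimate for the state is unavailable and must be replaced by the linearized stability bound of \Cref{thm:linearized_NS}. Since $J \ge 0$ and the feasible set is nonempty by \Cref{ass:feasible_point}, the infimum $j := \inf J$ is finite, and I would fix a minimizing sequence $(q_n,\uu_n) \in Q \times \VV$ of feasible pairs with $J(q_n,\uu_n) \to j$. Boundedness of the cost along this sequence immediately yields uniform bounds on $\|q_n\|_{L^2(I)}$ from the Tikhonov term and on $\|\uu_n - \uu_d\|_{L^4(I\times\Om)}$, hence on $\|\uu_n\|_{L^4(I\times\Om)}$, from the tracking term.

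The key step, and the one I expect to be the main obstacle, is to upgrade this mere $L^4$ bound into a uniform bound in $\VV$. The idea is to read the state equation \eqref{eq:stateequation} as the linearized problem \eqref{eq:linearized_NS} with linearization point equal to $\uu_n$ itself: recognizing via \Cref{lemm:boundary_to_rhs} that $\Oprod{(\uu_n\cdot\nabla)\uu_n,\vv} = \Oprod{(\uu_n\cdot\nabla\uu_n),\vv}$, each $\uu_n$ is the (unique) solution of \eqref{eq:linearized_NS} with linearization point $\uu_n \in L^4(I\times\Om)^2$, data $q_n$ and initial value $\uu_0 \in \Hspace$. Hence \Cref{thm:linearized_NS} applies and gives
\[
  \|\uu_n\|_{L^2(I;\V)} + \|\uu_n\|_{L^\infty(I;\Hspace)} + \|\partial_t\uu_n\|_{L^2(I;\V^*)} \le C\big(\|\uu_n\|_{L^4(I\times\Om)}\big)\big[\|\uu_0\|_{L^2(\Om)} + \|q_n\|_{L^2(I)}\big].
\]
Since $\|\uu_n\|_{L^4(I\times\Om)}$ and $\|q_n\|_{L^2(I)}$ are uniformly bounded, so is the right-hand side. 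This is precisely where the nonstandard $L^4$ tracking norm pays off: it controls the linearization point, and the linearized stability estimate then converts that control into a genuine $\VV$ bound, bypassing the missing energy estimate for the nonlinear equation.

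With uniform $\VV$ bounds in hand, I would extract a subsequence with $q_n \rightharpoonup \qbar$ in $Q$ and $\uu_n \rightharpoonup \ubar$ in $\VV$. The compact embedding of \Cref{thm:compact_imbedding} (with $X_0 = \V$, $X = \Hspace$, $X_1 = \V^*$) gives $\uu_n \to \ubar$ strongly in $L^2(I;\Hspace)$, and interpolating against the bounded $L^2(I;\V)$ norm upgrades this to strong convergence in $L^2(I;H^s(\Om)^2)$ for any $s \in (\tfrac12,1)$. To pass to the limit I would test against $\vv\,\phi(t)$ with $\vv \in \V$ and $\phi \in C_c^\infty(I)$; the linear terms converge by weak convergence, while for the nonlinearity I integrate by parts as $\Oprod{(\uu_n\cdot\nabla)\uu_n,\vv} = -\Oprod{(\uu_n\cdot\nabla)\vv,\uu_n} + \Gprodj{\uu_n\cdot\nn,\uu_n\cdot\vv}{N}$. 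The domain term converges because $\uu_n\otimes\uu_n \to \ubar\otimes\ubar$ in $L^1(I\times\Om)$ from strong $L^2$ convergence, and the nonvanishing boundary term — the genuinely nonstandard ingredient here — converges because the trace map $H^s(\Om) \to L^2(\dO)$ is continuous for $s>\tfrac12$, so the traces converge strongly in $L^2(I;L^2(\Gamma_N))$ and the quadratic boundary integral passes to the limit. Weak closedness of the convex constraint set preserves $q_{a,i}\le\qbar_i\le q_{b,i}$, and weak continuity of the evaluation $\uu\mapsto\uu(0)$ from $\VV$ to $\Hspace$ yields $\ubar(0)=\uu_0$, so $(\qbar,\ubar)$ is feasible.

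Finally, $J$ is weakly lower semicontinuous: the Tikhonov term is convex and continuous, and since the weak $L^4$ and strong $L^2$ limits of $\uu_n$ coincide, $\uu_n \rightharpoonup \ubar$ in $L^4(I\times\Om)$ makes the tracking term lower semicontinuous, so $J(\qbar,\ubar) \le \liminf J(q_n,\uu_n) = j$ and $(\qbar,\ubar)$ is a global minimizer. The explicit bounds on $\|\qbar\|_{L^2(I)}$ and $\|\ubar\|_{L^4(I\times\Om)}$ then drop out of the comparison $J(\qbar,\ubar) \le J(\qc,\uuc)$ together with the elementary inequality $\sqrt{a+b}\le\sqrt a+\sqrt b$, and the bound $C_1$ follows by applying \Cref{thm:linearized_NS} once more, now to the minimizer $\ubar$ whose $L^4$ norm is already controlled. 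For the last assertion, if $\uu_0 \in \V$ I would invoke \Cref{thm:NS_H2} with right-hand side $\ff = -\sum_{i=1}^L\qbar_i\nabla\zeta_i \in L^2(I;L^2(\Om)^2)$, whose norm is controlled by $\|\qbar\|_{L^2(I)}$ through \Cref{lemm:boundary_to_rhs}; since the prefactor $\|\ubar\|_{L^2(I;\V)}+\|\ubar\|_{L^\infty(I;\Hspace)}$ is bounded by $C_1$, this gives $\ubar \in \W$ together with the stated $C_2$ bound.
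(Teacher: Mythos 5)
Your proposal is correct and its backbone is exactly the paper's argument: the decisive step in both is to observe that each $\uu_n$ in the minimizing sequence solves the linearized problem \eqref{eq:linearized_NS} with linearization point $\uu_n$ itself, so that the uniform $L^4(I\times\Om)$ bound coming from the tracking term feeds into \Cref{thm:linearized_NS} and yields the missing uniform $\VV\cap L^\infty(I;\Hspace)$ bound; the extraction of subsequences, the compact embedding, the weak lower semicontinuity, the explicit bounds from $J(\qbar,\ubar)\le J(\qc,\uuc)$, and the final appeal to \Cref{thm:NS_H2} for $\uu_0\in\V$ all coincide with the paper. The one place you genuinely diverge is the limit passage in the nonlinearity: the paper keeps the trilinear form as $\Oprod{(\uu_n\cdot\nabla)\uu_n,\vv}$, inserts an artificial zero, and uses the Hölder-type estimate \eqref{eq:trilinear_l4_est} so that one piece converges by weak convergence in $L^2(I;\V)$ and the other by strong convergence in $L^2(I;\Hspace)$ together with the uniform bound --- no boundary terms ever appear. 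You instead integrate by parts first, which creates the quadratic boundary integral $\Gprodj{\uu_n\cdot\nn,\uu_n\cdot\vv}{N}$ and forces you to prove strong convergence of traces. Your interpolation argument ($L^2(I;\Hspace)$ strong convergence plus $L^2(I;\V)$ boundedness gives strong $L^2(I;H^s(\Om)^2)$ convergence for $s\in(\tfrac12,1)$, hence strong trace convergence) is sound, but passing to the limit in the \emph{quadratic} boundary term needs, in addition, a uniform bound on the traces in a space better than $L^2$ (e.g.\ $L^{8/3}(I;L^4(\Gamma_N)^2)$ via \Cref{thm:trace_interpolation_2}) or a restriction to a dense set of bounded test functions; you should state this explicitly. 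The paper's route buys a cleaner argument by avoiding the boundary altogether, while yours makes visible exactly where the Do-Nothing boundary condition would bite if the trace control were absent.
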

\begin{proof}
  \revD{Let us first introduce the set of feasible control-state pairs
    \begin{equation*}
      M := \{(q,\uu) \in Q \times \VV: (q,\uu) \text{ satisfy } 
    \eqref{eq:stateequation},\eqref{eq:control_constraint}\}.
    \end{equation*}
  By \Cref{ass:feasible_point}, it holds $(\qc,\uuc) \in M$ and thus $M \neq \emptyset$.
  }
   Since the objective functional is bounded from below by $0$, there exists a minimizing sequence
   $\{(q_n,\uu_n)\} \subset \revD{M}$ such that
   \begin{equation*}
      J(\revB{q_n,\uu_n}) \rightarrow \revC{\bar{J}} :=
      \revD{\inf \{J(\revB{q,\uu}) : (q,\uu) \in M\}.}
   \end{equation*}
   \revD{Especially, due to the equivalence of \eqref{eq:minProb} and \eqref{eq:minProb_3},
     for each $n \in \N$, $(q_n,\uu_n)$ satisfy the initial condition
     $\uu_n(0)=\uu_0$ and the state equation
      \begin{equation}\label{eq:stateequation_sequence}
           \Opair{\partial_t \uu_n,\vv} +  \Oprod{\nabla \uu_n,\nabla \vv} + \Oprod{(\uu_n\cdot \nabla \uu_n),\vv}
        = - \sum_{i=1}^L q_{n,i} \Gprodj{\vv,\nn}{N,i} \quad \textforall \vv \in \V,  \alev t \in I,
      \end{equation}
      where we have written the right hand side as in the original problem formulation 
      \eqref{eq:minProb}.}
   As $(\qc,\uuc)$ are feasible, \revD{it holds $J(\qc,\uuc)\ge \bar J$.
     In the case of equality, we immediately obtain, that $(\qbar,\ubar)=(\qc,\uuc)$ is a global
     solution. Hence we can w.l.o.g. assume that $J(\qc,\uuc) > \bar J$. 
    In this case we obtain,} that for $n$ large enough, there holds 
   \begin{equation*}
      \frac{\alpha}{2} \|q_n\|_{L^2(I)}^2 \le J(q_n,\uu_n) \le J(\qc,\uuc) 
      = \frac14 \|\uuc -\uu_d\|_{L^4(I\times \Om)}^4 + \frac{\alpha}{2} \|\qc\|_{L^2(I)}^2.
   \end{equation*}
   Dividing by $\frac\alpha2$ and taking the square root, yields
   \begin{equation}\label{eq:minseq_boundedness_q}
      \|q_n\|_{L^2(I)} \le (2\alpha)^{-\frac12} \|\uuc - \uu_d\|_{L^4(I\times \Om)}^2 + \|\qc\|_{L^2(I)}
     = \text{ const.}
   \end{equation}
   Similarly, using the same arguments plus the triangle inequality, 
   from $\frac14 \|\uu_n - \uu_d\|_{L^4(I\times \Om)}^4 \le J(\qc,\uuc)$, we obtain
   \begin{equation}\label{eq:minseq_boundedness_u}
     \|\uu_n\|_{L^4(I\times \Om)} \le \|\uuc - \uu_d\|_{L^4(I\times \Om)} +
     (2 \alpha)^{\frac14} \|\qc\|_{L^2(I)}^{\frac12} + \|\uu_d\|_{L^4(I\times \Om)} = \text{const.}
   \end{equation}
   \revD{As we have just shown a uniform bound for $\|\uu_n\|_{L^4(I\times \Om)}$, for each $n \in \N$
      the corresponding $\uu_n$ may be used as a linearization point in \eqref{eq:linearized_NS}. 
      We thus define $\ww_n \in \VV$ to be the unique solution to
      \begin{equation}\label{eq:linearized_sequence}
           \Opair{\partial_t \ww_n,\vv} +  \Oprod{\nabla \ww_n,\nabla \vv} + \Oprod{(\uu_n\cdot \nabla \ww_n),\vv}
        = - \sum_{i=1}^L q_{n,i} \Gprodj{\vv,\nn}{N,i} \quad \textforall \vv \in \V,  \alev t \in I,
      \end{equation}
      with initial condition $\ww_n(0) = \uu_0$.
      By \Cref{thm:linearized_NS} there holds
   \begin{equation}\label{eq:bound_wn}
     \|\partial_t \ww_n\|_{L^2(I;\V^*)}+ \|\ww_n\|_{L^2(I;\V)} + \|\ww_n\|_{L^\infty(I;\Hspace)} 
   \le C\left(\|\uu_n\|_{L^4(I\times \Om)}\right)\left(\|\uu_0\|_\Hspace + \|q_n\|_{L^2(I)}\right).
   \end{equation}
   Since for every $n \in \N$, it holds $\uu_n \in \VV$, we may also insert $\uu_n$ 
   in place of $\ww_n$ into \eqref{eq:linearized_sequence}. In this case we have on the left hand
   side exactly the left hand side of \eqref{eq:stateequation_sequence}. 
   As the right hand sides of \eqref{eq:stateequation_sequence} and \eqref{eq:linearized_sequence}
   coincide, the uniqueness of solutions to \eqref{eq:linearized_sequence} implies $\ww_n = \uu_n$.
   Hence the estimate \eqref{eq:bound_wn} translates to
   }
   \begin{equation}\label{eq:minimizing_sequence_bounded}
     \revB{\|\partial_t \uu_n\|_{L^2(I;\V^*)}+}
   \|\uu_n\|_{L^2(I;\V)} + \|\uu_n\|_{L^\infty(I;\Hspace)} 
   \le C\left(\|\uu_n\|_{L^4(\revB{I\times \Om})}\right)\left(\|\uu_0\|_\Hspace + \|q_n\|_{L^2(I)}\right) \le C,
   \end{equation}
   which due to \eqref{eq:minseq_boundedness_q} and \eqref{eq:minseq_boundedness_u} holds uniformly for all $n$.
   \revC{After} we have shown these boundedness statements,
   we can extract subsequences, which converge in some topologies.
   Due to \eqref{eq:minseq_boundedness_q}, for a subsequence of controls, again denoted by $\{q_n\}$, it holds
   $q_n \rightharpoonup \revA{\qbar}$ weakly in $\revB{Q}$.
      As the set of controls satisfying the constraints is closed and convex, it is 
    weakly closed, and thus $q_a \le \qbar \le q_b$.
   For a subsequence of the states, again denoted by $\{\uu_n\}$, we will show:
   \begin{align}
     \label{alig:weak_conv_L2H1}
     \uu_n \rightharpoonup & \ \revA{\ubar},  \text{ weakly in } L^2(I;\V),\\
     \label{alig:weakstar_conv_H1Vstar}
     \revB{\partial_t \uu_n \overset{*}{\rightharpoonup}} & \ \revB{\partial_t \ubar,  \text{ weak-star in } L^2(I;\V^*),}\\
     \label{alig:weakstar_conv_LinftyL2}
     \uu_n \overset{*}{\rightharpoonup} & \ \revA{\ubar},   \text{ weak-star in } L^\infty(I;\Hspace),\\
     \label{alig:strong_conv_L2L2}
     \uu_n \rightarrow & \ \revA{\ubar},  \text{ strongly in } L^2(I;\revB{\Hspace}).
   \end{align}
   The statements \eqref{alig:weak_conv_L2H1},\revB{\eqref{alig:weakstar_conv_H1Vstar}} and \eqref{alig:weakstar_conv_LinftyL2} 
   can immediately be deduced from the bound
   \eqref{eq:minimizing_sequence_bounded}. The strong convergence \eqref{alig:strong_conv_L2L2} is a 
   consequence of \eqref{eq:minimizing_sequence_bounded} and \Cref{thm:compact_imbedding} 
   \revB{with the choice $X_0 = \V$, $X_1 = \V^*$ and $X = \Hspace$}.
    We conclude our summary of convergence results by showing that 
   for any $\ww \in L^\infty(I;L^4(\Omega)^2)$, it holds
   \begin{equation}\label{eq:NonlinDomainConvergence}
     \IOprod{(\uu_n \cdot \nabla) \uu_n - (\revA{\ubar} \cdot \nabla)\revA{\ubar}, \ww} \rightarrow 0.
   \end{equation}
   \revB{By \eqref{eq:trilinear_l4_est} we obtain the estimate}
   \begin{align*}
     \left| \IOprod{(\revA{\ubar} \cdot \nabla) \yy,\ww} \right| 
      &\le C \int_0^T \|\revA{\ubar}\|_{L^4(\Omega)} \|\nabla \yy \|_{L^2(\Omega)} \|\ww\|_{L^4(\Omega)} \ dt
      \le C \int_0^T \|\nabla \revA{\ubar}\|_{L^2(\Omega)}^{\frac{1}{2}}\|\revA{\ubar}\|_{L^2(\Omega)}^{\frac{1}{2}} 
         \|\nabla \yy \|_{L^2(\Omega)} \|\ww\|_{L^4(\Omega)} \ dt\\
      &\le C \|\revA{\ubar}\|_{L^2(I;H^1(\Omega))}^{\frac{1}{2}} 
      \|\revA{\ubar}\|_{L^2(\revB{I\times \Om})}^{\frac{1}{2}}
      \|\yy\|_{L^2(I;H^1(\Omega))} \|\ww\|_{L^\infty(I;L^4(\Omega))},
   \end{align*}
   \revB{which shows that for fixed $\ww \in L^\infty(I;L^4(\Om)^2)$, the} 
   functional $L^2(I;\V) \ni \yy \mapsto \IOprod{(\revA{\ubar}\cdot \nabla)\yy,\ww} \in \R$
   is an element of $L^2(I;\V^*)$. 
   \revB{Thus by the weak convergence \eqref{alig:weak_conv_L2H1} on the one hand,
     and the strong convergence \eqref{alig:strong_conv_L2L2} and boundedness 
   \eqref{eq:minimizing_sequence_bounded}. on the other hand, we obtain
   \begin{equation*}
     |\IOprod{(\ubar\cdot \nabla)(\uu_n - \ubar),\ww}| \rightarrow 0
     \qquad \text{and} \qquad
     \revC{|\IOprod{((\uu_n - \ubar)\cdot \nabla)\uu_n,\ww}| \rightarrow 0.}
   \end{equation*}
 }
   With an application of triangle inequality we have thus shown \eqref{eq:NonlinDomainConvergence}.
   \revB{Hence,} for $\vv \in \V$ and $\vartheta \in C^\infty_0(\bar I)$ all terms of
   \begin{equation*}
     \IOpair{\partial_t \uu_n,\vv\vartheta}
      + \IOprod{\nabla \uu_n,\nabla \vv\vartheta}
      + \IOprod{(\uu_n\cdot \nabla) \uu_n,\vv\vartheta}
      =
      - \sum_{i=1}^L \IOprod{q_{n,i} \nabla \zeta_i,\vv\vartheta}
   \end{equation*}
   converge in such a way, that the whole equation converges to
   \begin{equation*}
     \revB{\IOpair{\partial_t \ubar,\vv\vartheta}}
     + \IOprod{\nabla \ubar,\nabla \vv\vartheta}
     + \IOprod{(\ubar\cdot \nabla) \ubar,\vv\vartheta}
      =
    - \sum_{i=1}^L \IOprod{\revB{\qbar}_{i} \nabla \zeta_i,\vv\vartheta}.
   \end{equation*}
   Thus, $\ubar$ is the solution to the Navier-Stokes equations with data $\qbar_i$, \revD{$\uu_0$}.
   Lastly, since the occuring norms in $J$ are weakly lower semicontinuous, it holds
   \begin{equation*}
      J(\revB{\qbar,\ubar}) \le \liminf_n J(\revB{q_n,\uu_n}) = \revC{\bar{J}}.
   \end{equation*}
   Hence we have shown that $(\revB{\qbar,\ubar})$ is a minimizer of \eqref{eq:minProb_3}.
   As \eqref{eq:minseq_boundedness_q} and \eqref{eq:minseq_boundedness_u} also hold in the limit, 
      the first two bounds stated in the theorem immediately follow.
   The bound for $\ubar$ in the norm of $\VV$ follows from taking the limit in 
      \eqref{eq:minimizing_sequence_bounded}.
      If $\uu_0 \in \V$, then \Cref{thm:NS_H2} shows the bound for $\ubar$ in the norm of $\W$.
\end{proof}
Problem \eqref{eq:minProb_3} is an optimal control problem in unreduced form.
To reduce it to a minimization problem posed only in terms of controls, we need to ensure, that 
every control in the admissible set possesses an associated state.
In the unreduced form, this is given by definition of the pair $(\revB{q,\uu})$ being feasible,
whereas in the reduced form, we need to consider the minimization over the following subset
\begin{equation*}
   \mathcal Q^S := \{q \in \revA{Q} : \exists \text{ weak solution } \uu \revB{\in \VV} 
   \text{ to } \eqref{eq:stateequation}\},
\end{equation*}
which by \Cref{prop:open_dataset_H2} is an open subset of $\revA{Q}$.
Note however, that in general $\mathcal Q^S$ is not a linear subspace of $\revA{Q}$.
Due to the uniqueness result, shown in \Cref{thm:uniqueness_stateeq},
on $\mathcal Q^S$ we can introduce a well defined
control-to-state mapping
\begin{equation*}
  S\colon \mathcal Q^S \to \W,
  q \mapsto \uu \text{ solving \eqref{eq:stateequation}}.
\end{equation*}
The box constraints further require the controls to be in the following convex subset of $Q$
\begin{equation*}
   \Qbox := \{ q \in Q : q_a \le q \le q_b \ \text{a.e. in } I\}.
\end{equation*}
The set of admissible controls is thus given as 
\begin{equation*}
   \Qad := \mathcal Q^S \cap \Qbox.
\end{equation*}
\revC{Under \Cref{ass:feasible_point}, $\Qad \neq \emptyset$.}
With these considerations, the reduced form of \eqref{eq:minProb_3} reads
\begin{equation}\label{eq:reduced_problem}\tag{$P_{red}$}
   \min_{q \in \Qad} \quad j(q) := J(q,S(q)) 
   = \frac{1}{4} \|S(q)-\uu_d\|_{L^4(I\times \Omega)}^4 + 
   \frac{\alpha}{2} \|q\|_{L^2(I)}^2.
\end{equation}
\subsection{First Order Optimality Conditions}
\revA{In order to derive first order optimality conditions, we first recall some differentiability results on 
   the control to state mapping $S$ as well as the reduced objective function $j$.
   Note that since $\mathcal Q^S \subset Q$ is open, we can take derivatives in any direction $\dq \in Q$.
}
\begin{theorem}\label{thm:state_derivatives}
  The control to state mapping $S$ is of class $C^\infty$, and for $q \in \mathcal Q^S$, $\uu=S(q)$,
  and $\dq \in \revA{Q}$, the derivative $\du = S'(q)(\dq)$ is given as the solution of 
  \begin{equation}\label{eq:first_derivatives}
        \left\lbrace
   \begin{aligned}
     \Oprod{\partial_t \du,\vv} + \Oprod{\nabla \du,\nabla \vv} 
     + \Oprod{(\uu\cdot \nabla)\du,\vv}
     + \Oprod{(\du\cdot \nabla)\uu,\vv} &=
     - \Oprod{\sum_{i=1}^L \dq_i \nabla \zeta_i,\vv} \quad \textforall \vv \in \V,  \alev t \in I,\\
   \du(0) &= 0.\\
   \end{aligned}
 \right.
\end{equation}
For $\rq \in \revA{Q}$, and $\ru = S'(q)(\rq)$,
the second derivative $\su = S''(q)(\dq,\rq)$ is given as the solution of
\begin{equation}\label{eq:second_derivatives}
        \left\lbrace
   \begin{aligned}
     \Oprod{\partial_t \su,\vv} + \Oprod{\nabla \su,\nabla v} 
     + \Oprod{(\uu\cdot \nabla)\su,\vv} \hspace{2cm} &\\
     + \Oprod{(\su\cdot \nabla)\uu,\vv} 
     + \Oprod{(\du\cdot \nabla) \ru,\vv} 
     + \Oprod{(\ru\cdot \nabla)\du,\vv} &=
     0 \quad \textforall \vv \in \V,  \alev t \in I,\\
   \su(0) &= 0.\\
   \end{aligned}
 \right.
\end{equation}
The first and second derivatives of the control to state mapping possess the regularities
$\du,\ru,\su \in \W$, defined in \eqref{eq:space_W_definition}.
\end{theorem}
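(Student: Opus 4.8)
The plan is to obtain the whole statement from the implicit function theorem, so that the $C^\infty$ regularity of $S$ and the derivative formulas \eqref{eq:first_derivatives}, \eqref{eq:second_derivatives} drop out simultaneously. First I would rewrite \eqref{eq:stateequation} as an operator equation. With the Stokes operator $A$ from \eqref{eq:def_Stokes_operator} and the orthogonal projection $\mathbb{P}\colon L^2(\Om)^2 \to \Hspace$ from \eqref{eq:Hperp}, I define
\begin{equation*}
   e\colon Q \times \W \to L^2(I;\Hspace) \times \V, \qquad
   e(q,\uu) := \left( \partial_t \uu + A\uu + \mathbb{P}\big[(\uu\cdot\nabla)\uu\big]
   + \mathbb{P}\Big[\sum_{i=1}^L q_i \nabla \zeta_i\Big],\ \uu(0)-\uu_0\right).
\end{equation*}
For $\uu \in \W$ the trace $\uu(0)\in\V$ is well defined by \eqref{eq:space_W_definition}, one has $\partial_t\uu + A\uu \in L^2(I;\Hspace)$, and $\mathbb{P}[(\uu\cdot\nabla)\uu]\in L^2(I;\Hspace)$ by the bilinear bound recalled below, so $e$ is well defined; by \Cref{lemm:boundary_to_rhs}, $e(q,\uu)=0$ if and only if $\uu=S(q)$.

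Next I would verify that $e$ is of class $C^\infty$. All contributions except the trilinear one are affine and bounded in $(q,\uu)$, so it suffices to treat the quadratic map $(\uu,\vv)\mapsto \mathbb{P}[(\uu\cdot\nabla)\vv]$. Using $\W\hookrightarrow C(\bar I;\V)\hookrightarrow L^\infty(I;L^4(\Om)^2)$ together with $\W\hookrightarrow L^2(I;H^2(\Om)^2)\hookrightarrow L^2(I;W^{1,4}(\Om)^2)$, a Hölder estimate gives
\begin{equation*}
   \big\|\mathbb{P}[(\uu\cdot\nabla)\vv]\big\|_{L^2(I;\Hspace)}
   \le \|\uu\|_{L^\infty(I;L^4(\Om))}\,\|\nabla\vv\|_{L^2(I;L^4(\Om))}
   \le C\,\|\uu\|_{\W}\,\|\vv\|_{\W},
\end{equation*}
so this is a bounded bilinear operator $\W\times\W\to L^2(I;\Hspace)$. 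Hence $e$ is a polynomial of degree two in $(q,\uu)$ and therefore $C^\infty$.

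The heart of the argument is to show that $\partial_\uu e(q,\uu)$ is a topological isomorphism at $(q,\uu)=(q,S(q))$. Writing $\partial_\uu e(q,\uu)[\du]=(\partial_t\du + A\du + B\du,\ \du(0))$ with $B\du:=\mathbb{P}[(\uu\cdot\nabla)\du+(\du\cdot\nabla)\uu]$, I would note that by \Cref{prop:instat_stokes_solvability} the pure Stokes map $\mathcal{L}_0\colon\du\mapsto(\partial_t\du+A\du,\du(0))$ is an isomorphism $\W\to L^2(I;\Hspace)\times\V$, and that $B\colon\W\to L^2(I;\Hspace)$ is bounded by the same bilinear bound. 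Defining $K\du\in\W$ as the solution of $\mathcal{L}_0(K\du)=(B\du,\oo)$, one gets $\partial_\uu e(q,\uu)=\mathcal{L}_0(\mathrm{Id}_{\W}+K)$, so it remains to show $\mathrm{Id}_{\W}+K$ is invertible. I would do this by Fredholm theory: $K$ is compact, because a bounded sequence in $\W$ has, by the compact embeddings \eqref{eq:compact_imbedding}, a subsequence converging in $L^2(I;\V)\cap C(\bar I;\Hspace)$, along which $B$ converges in $L^2(I;\Hspace)$ by the interpolation estimate \eqref{eq:Lp_interpolation_estimate} and the trilinear bound \eqref{eq:trilinear_l4_est}, after which continuity of $\mathcal{L}_0^{-1}$ yields convergence of $K\du_n$ in $\W$. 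By the Fredholm alternative, $\mathrm{Id}_{\W}+K$ is an isomorphism once it is injective, and injectivity is exactly uniqueness for the Oseen problem $\partial_t\du+A\du+B\du=0$, $\du(0)=\oo$; this I would obtain by testing with $\du$, rewriting $((\uu\cdot\nabla)\du,\du)_\Om$ as the boundary term $\tfrac12(\uu\cdot\nn,|\du|^2)_{\partial\Om}$ via \eqref{eq:trilinear_boundary_term}, estimating it through \Cref{thm:trace_interpolation_2}, and applying Gronwall's lemma exactly as in the proof of \Cref{thm:uniqueness_stateeq}.

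With these ingredients the implicit function theorem applies at every $q\in\mathcal{Q}^S$, and since $\mathcal{Q}^S$ is open this gives $S\colon\mathcal{Q}^S\to\W$ of class $C^\infty$. Differentiating $e(q,S(q))=0$ once in a direction $\dq\in Q$ and testing with $\vv\in\V$ (so that $\mathbb{P}$ and $A$ become $(\nabla\du,\nabla\vv)$) produces \eqref{eq:first_derivatives} for $\du=S'(q)\dq$, and a second differentiation in direction $\rq\in Q$ yields \eqref{eq:second_derivatives} for $\su=S''(q)(\dq,\rq)$, where the cross terms $(\du\cdot\nabla)\ru+(\ru\cdot\nabla)\du$ come from the second derivative of the quadratic nonlinearity. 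Since $S$ is $\W$-valued and $C^\infty$, all its derivatives are $\W$-valued, giving $\du,\ru,\su\in\W$. I expect the isomorphism property of $\partial_\uu e$ to be the main obstacle — specifically the compactness of $K$ together with the $\W$-level control of the transport term $(\du\cdot\nabla)\uu$; once the Stokes operator is seen to provide maximal regularity and the Oseen part is recognized as a compact correction, the remainder is the standard implicit-function-theorem machinery.
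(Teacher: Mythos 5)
Your overall strategy is the same as the paper's: the paper also establishes wellposedness of the linearized systems and then invokes the implicit function theorem, outsourcing the functional-analytic details to \cite{casas_optimal_1998} and the regularity of the derivatives to \cite{benes_solutions_2016} and a bootstrapping step. What you do differently is to carry out the implicit function theorem directly at the strong-solution level, so that the $\W$-regularity of $\du,\ru,\su$ falls out of the isomorphism property rather than being bootstrapped afterwards, and you replace the Galerkin existence argument for the linearized equation by a Fredholm alternative (compact perturbation of the maximal-regularity Stokes isomorphism, with injectivity supplied by the same Gronwall/trace argument as in \Cref{thm:uniqueness_stateeq}). This is a legitimate and in fact more self-contained execution; the price is that you must be careful about which strong space the operator equation actually lives in.

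That is where the one genuine flaw sits: you define $e$ on $Q\times\W$ with first component containing $A\uu$, but $A\uu$ is not defined for a general $\uu\in\W$. The space $\W=L^2(I;H^2(\Om)^2\cap\V)\cap H^1(I;\Hspace)$ does not encode the homogeneous Do-Nothing condition; by \Cref{thm:mixed_Stokes_h2} one only has $\mathcal D(A)\subsetneq H^2(\Om)^2\cap\V$, and for $\uu(t)\notin\mathcal D(A)$ there is no $\ff\in\Hspace$ with $\Oprod{\nabla\uu,\nabla\vv}=\Oprod{\ff,\vv}$ for all $\vv\in\V$ (replacing $A\uu$ by $-\mathbb P\Delta\uu$ does not help, since the uncontrolled Neumann boundary term then breaks the equivalence $e(q,\uu)=0\Leftrightarrow\uu=S(q)$). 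The repair is routine: take the state space to be $Y:=L^2(I;\mathcal D(A))\cap H^1(I;\Hspace)$ equipped with the graph norm, note that $Y\hookrightarrow\W$ continuously by the a priori bound of \Cref{thm:mixed_Stokes_h2} and that the solutions produced by \Cref{prop:instat_stokes_solvability} and \Cref{thm:NS_H2} in fact lie in $Y$, and then run your entire argument ($\mathcal L_0$ an isomorphism $Y\to L^2(I;\Hspace)\times\V$, $B$ bounded, $K$ compact via \eqref{eq:compact_imbedding} and \eqref{eq:Lp_interpolation_estimate}, Fredholm plus Gronwall) verbatim on $Y$. With that substitution the proof is complete and yields $\du,\ru,\su\in Y\subset\W$ as claimed.
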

\begin{proof}
  \revD{Arguing similarly to the proof of \Cref{thm:linearized_NS}, one can show that the systems
    \eqref{eq:first_derivatives} and \eqref{eq:second_derivatives} respectively are wellposed.
    With this, an invokation of the implicit function theorem then shows the differentiability 
    of $S$. Such an argument is carried out in detail in 
   \cite[Theorem 4.2.1, Corollary 4.2.2]{casas_optimal_1998}, which directly can be transferred to 
    the setting of this paper.}
  The regularities of the first derivates is argued in the proof of 
  \cite[Theorem 4.1]{benes_solutions_2016}.
  To argue the regularity of $\su$, one can show 
  $\du(\nabla \cdot \ru), \ru (\nabla \cdot \du) \in L^2(I;L^2(\Omega)^2)$,
  as in \cite[Theorem 3.8]{vexler_error_2024}.
  Bootstrapping then yields the same equation as for the first derivatives, which concludes the proof.
\end{proof}
\revD{Note that the results of \Cref{prop:open_dataset_H2} and \Cref{thm:state_derivatives} are 
  strongly related, as they both are a consequence of the wellposedness of the linearized equations
  together with the implicit function 
  theorem. We have chosen to present \Cref{thm:state_derivatives} here, to have a more compact 
  presentation of the optimal control theory section.}
Following a formal Lagrange approach, it is straightforward to deduce the adjoint equation.
The adjoint equation for this problem is given by: Find $\zz \in \VV$ such that
\begin{equation}\label{eq:adjoint_L4}
  \left\{
\begin{aligned}
   \revB{\Opair{-\partial_t \zz, \vv}} + \Oprod{\nabla \zz,\nabla \vv} 
  + \Oprod{(\uu\cdot \nabla)\vv + (\vv \cdot \nabla)\uu,\zz}
   &= \Oprod{|\uu-\uu_d|^2(\uu-\uu_d),\vv} \quad \textforall \vv \in \V, \alev t \in I,\\
   \zz(T) &= 0.\\
\end{aligned}
\right.
\end{equation}
\begin{theorem}\label{thm:adjoint_solvability}
   Let $\uu_d \in L^6(I;L^{3+\varepsilon}(\Omega)^2)$ for some $\varepsilon > 0$ and let 
   $\uu \in \W$ solve the state equation \eqref{eq:NS_fullData}.
   Then the adjoint equation \eqref{eq:adjoint_L4} possesses a \revA{unique} solution  
    $\zz \in \VV$, which satisfies the bound
  \begin{equation*}
    \|\partial_t \zz\|_{L^2(I;\V^*)} + \|\zz \|_{L^2(I;\V)} + \|\zz\|_{L^\infty(I;\Hspace)}
    \le C \|\uu - \uu_d\|_{L^6(I;L^{3+\varepsilon}(\Om))}^3.
  \end{equation*}
\end{theorem}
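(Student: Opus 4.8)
The plan is to read \eqref{eq:adjoint_L4} as a linear backward parabolic problem whose two convection contributions form the transpose of the linearized operator of \Cref{thm:linearized_NS}, and to obtain existence, uniqueness and the quantitative bound by a time-reversed Galerkin/Gronwall argument. First I would reverse time via $\tau = T-t$, setting $\tilde\zz(\tau)=\zz(T-\tau)$ and $\tilde\uu(\tau)=\uu(T-\tau)$; since $-\partial_t\zz = \partial_\tau\tilde\zz$, the terminal condition $\zz(T)=\oo$ turns into the initial condition $\tilde\zz(0)=\oo$, and the problem becomes a forward linear equation with source $\ff_d := |\uu-\uu_d|^2(\uu-\uu_d)$ and convection terms $\Oprod{(\uu\cdot\nabla)\vv,\zz}$ and $\Oprod{(\vv\cdot\nabla)\uu,\zz}$.

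Before the energy estimate I would check that the source lies in $L^2(I;\V^*)$ with the claimed bound. Pointwise $|\ff_d| = |\uu-\uu_d|^3$, so $\|\ff_d(t)\|_{L^{(3+\varepsilon)/3}(\Om)} = \|\uu(t)-\uu_d(t)\|_{L^{3+\varepsilon}(\Om)}^3$. As $(3+\varepsilon)/3 > 1$ and $\Om\subset\R^2$, the embedding $H^1(\Om)\hookrightarrow L^r(\Om)$ for the conjugate exponent $r=(3+\varepsilon)/\varepsilon < \infty$ yields $\|\ff_d(t)\|_{\V^*}\le C\|\uu(t)-\uu_d(t)\|_{L^{3+\varepsilon}(\Om)}^3$. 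Taking the $L^2(I)$ norm in time and using that the cube of the $L^6(I)$ norm equals the $L^2(I)$ norm of the cube gives $\|\ff_d\|_{L^2(I;\V^*)}\le C\|\uu-\uu_d\|_{L^6(I;L^{3+\varepsilon}(\Om))}^3$, which is exactly the right-hand side of the asserted estimate.

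The core step is the a priori estimate obtained by testing with $\vv = \zz$. The term $\Oprod{(\uu\cdot\nabla)\zz,\zz}$ collapses by \eqref{eq:trilinear_boundary_term} to the boundary integral $\tfrac12\Gprodj{\uu\cdot\nn,|\zz|^2}{N}$, which I would bound by $C\|\uu\|_{L^3(\partial\Om)}\|\zz\|_{L^3(\partial\Om)}^2$ and then, via the trace estimate of \Cref{thm:trace_interpolation_2} with $p=3$ together with the extended Poincaré–Steklov inequality \cite[Lemma 3.30]{ern_finite_2021_partI}, by $C\|\uu\|_{H^1(\Om)}^{2/3}\|\uu\|_{L^2(\Om)}^{1/3}\|\nabla\zz\|_{L^2(\Om)}^{4/3}\|\zz\|_{L^2(\Om)}^{2/3}$. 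The term $\Oprod{(\zz\cdot\nabla)\uu,\zz}$ is handled by Hölder and the $L^4$ interpolation \eqref{eq:Lp_interpolation_estimate}, giving $C\|\nabla\uu\|_{L^2(\Om)}\|\zz\|_{L^2(\Om)}\|\nabla\zz\|_{L^2(\Om)}$, while $\Oprod{\ff_d,\zz}\le \|\ff_d\|_{\V^*}\|\zz\|_{\V}$. Repeated use of Young's inequality absorbs a fraction of $\|\nabla\zz\|_{L^2(\Om)}^2$ from each, producing a differential inequality of the form
\[
\partial_\tau\|\zz\|_{L^2(\Om)}^2 + \tfrac12\|\nabla\zz\|_{L^2(\Om)}^2 \le g(\tau)\|\zz\|_{L^2(\Om)}^2 + C\|\ff_d\|_{\V^*}^2,
\]
with $g$ integrable on $I$ since $\uu\in\W\hookrightarrow C(\bar I;\V)$. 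Gronwall's lemma, using $\tilde\zz(0)=\oo$, then bounds $\|\zz\|_{L^\infty(I;\Hspace)}$ and $\|\zz\|_{L^2(I;\V)}$, and reading the time derivative off the equation controls $\|\partial_t\zz\|_{L^2(I;\V^*)}$, each by $C\|\uu-\uu_d\|_{L^6(I;L^{3+\varepsilon}(\Om))}^3$.

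Finally, existence would follow from a standard Galerkin construction in the reversed variable, passing to the limit with the uniform bound just derived, and uniqueness from linearity, since the difference of two solutions solves the same equation with $\ff_d=\oo$ and zero initial data, whence the energy estimate forces it to vanish. I expect the main obstacle to be the boundary term $\tfrac12\Gprodj{\uu\cdot\nn,|\zz|^2}{N}$, which does not vanish under the \eqref{eq:DN} conditions; its control hinges on having $\uu\in C(\bar I;\V)$ so that the trace $L^p$ estimate of \Cref{thm:trace_interpolation_2} combined with Young's inequality keeps the boundary contribution subcritical relative to the dissipation.
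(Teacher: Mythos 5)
Your proposal is correct and follows essentially the same route as the paper's proof: the source term is placed in $L^2(I;\V^*)$ via exactly the Hölder/Sobolev pairing with exponents $(3+\varepsilon)/3$ and $(3+\varepsilon)/\varepsilon$, the two convection terms are estimated precisely as in the paper (the boundary term via \Cref{thm:trace_interpolation_2} with $p=3$, the reaction-type term via \eqref{eq:Lp_interpolation_estimate}), and existence/uniqueness follow from the same Galerkin--Gronwall scheme. The explicit time reversal is a cosmetic difference; the paper leaves it implicit.
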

\begin{proof}
  We begin by showing that the right hand side $|\uu-\uu_d|^2(\uu-\uu_d) \in L^2(I;\V^*)$.
  Due to Hölder's inequality, and Sobolev imbedding $\V \hookrightarrow L^p(\Om)$ for any
  $1\le p<\infty$, there holds \revD{for every $\vv \in \V$ and } for any $\varepsilon > 0$
  \begin{equation*}
    \Oprod{|\uu-\uu_d|^2(\uu-\uu_d),\vv}
    \le C\|\uu-\uu_d\|_{L^{3+\varepsilon}(\Om)}^3 \|\nabla \vv\|_{L^2(\Om)}
    \quad \text{a.e. in } I.
  \end{equation*}
  Thus 
  \begin{equation*}
    \||\uu-\uu_d|^2(\uu-\uu_d)\|_{\V^*} \le C \|\uu-\uu_d\|_{L^{3+\varepsilon}(\Om)}^3 
    \le C \left(\|\uu\|_{L^{3+\varepsilon}(\Om)}^3 +\|\uu_d\|_{L^{3+\varepsilon}(\Om)}^3\right).
  \end{equation*}
  The regularity $\uu \in C(\bar I;\V)$ together with Sobolev imbedding 
  $\V \hookrightarrow L^{3+\varepsilon}(\Om)$, and the assumption on $\uu_d$
  shows, that the right-hand side of the above estimate is square integrable in time,
  which shows $|\uu-\uu_d|^2(\uu-\uu_d) \in L^2(I;\V^*)$.
  We now show the existence and uniqueness of a solution $\zz \in \VV$.
  Both follow from the stability estimate, where existence is obtained by, e.g., a Galerkin procedure and 
  uniqueness by considering the equation for the difference between two solutions $\zz_1-\zz_2$.
  To show the stability estimate, we test \eqref{eq:adjoint_L4} with its solution.
  \revB{Using \eqref{eq:trilinear_l4_est} and Young's inequality yields}
   \begin{equation*}
     \Oprod{(\zz\cdot \nabla)\uu,\zz)} \le C \|\zz\|_{L^4(\Omega)}^2\|\nabla \uu\|_{L^2(\Omega)}
     \le C \|\zz\|_{L^2(\Omega)}\|\nabla \zz \|_{L^2(\Omega)}\|\nabla \uu\|_{L^2(\Omega)}
     \le \revC{\delta} \|\nabla \zz \|_{L^2(\Omega)}^2 
     + C_{\revC{\delta}} \|\zz\|_{L^2(\Omega)}^2\|\nabla \uu\|_{L^2(\Omega)}^2,
   \end{equation*}
   \revB{where $\revC{\delta} > 0$ can be made arbitrarily small.
   By \eqref{eq:trilinear_boundary_term}, there holds }
   $
     \Oprod{(\uu \cdot \nabla) \zz,\zz} = \frac{1}{2} (\uu\cdot \nn, \zz\cdot \zz)_{\partial \Omega},
     $
   which with \Cref{thm:trace_interpolation_2} and Young's inequality, we can bound by
   \begin{align*}
      \frac{1}{2} \dOprod{\uu \cdot \nn, \zz \cdot \zz}
      &\le 
     C \|\uu\|_{L^3(\partial \Omega)}\|\zz\|_{L^3(\dO)}^2 \le 
     C \|\nabla \uu\|_{L^2(\Omega)}^{\frac{2}{3}}\|\uu\|_{L^2(\Omega)}^{\frac{1}{3}}
     \|\nabla \zz\|_{L^2(\Omega)}^{\frac{4}{3}}\|\zz\|_{L^2(\Omega)}^{\frac{2}{3}}\\
      & \le \revC{\delta} \|\nabla \zz\|_{L^2(\Omega)}^2 
         + C_{\revC{\delta}} \|\nabla \uu\|_{L^2(\Omega)}^2\|\uu\|_{L^2(\Omega)} \|\zz\|_{L^2(\Omega)}^2.
   \end{align*}
   Choosing $\revC{\delta}$ small enough, allows us to absorb the $\revC{\delta} \|\nabla \zz\|_{L^2(\Om)}^2$ terms,
   and conclude the proof by a standard Gronwall argument.
\end{proof}
  \begin{theorem}\label{thm:optimality_condition}
    The reduced objective function $j\colon\mathcal Q^S \to \R$ is of class $C^\infty$, and its 
   first derivative \revA{in any direction $\dq \in Q$} satisfies the representation
    \begin{equation*}
       j'(q)(\dq) = \alpha \Iprod{q,\dq} \revB{-} \sum_{i=1}^L \Iprod{\dq_i, \Oprod{\nabla \zeta_i,\zz}},
    \end{equation*}
    with $\uu = S(q)$ and $\zz$ satisfying \eqref{eq:adjoint_L4}.
  \end{theorem}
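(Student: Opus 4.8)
The plan is to deduce both assertions from the chain rule, writing $j = J(\cdot, S(\cdot))$ and exploiting the smoothness of the control-to-state map already established in \Cref{thm:state_derivatives}. First I would argue that $j$ is of class $C^\infty$. The control part $q \mapsto \frac{\alpha}{2}\|q\|_{L^2(I)}^2$ is a continuous quadratic form on $Q$, hence smooth, and the tracking part $\uu \mapsto \frac14\|\uu-\uu_d\|_{L^4(I\times\Omega)}^4$ is a continuous quartic (thus $C^\infty$) functional on $L^4(I\times\Omega)^2$. By the embedding $\W \hookrightarrow L^4(I\times\Omega)^2$ from \eqref{eq:L4L4_imbeddings} and the fact that $S\colon \mathcal Q^S \to \W$ is $C^\infty$ by \Cref{thm:state_derivatives}, the composition $j$ is $C^\infty$ and the chain rule is applicable.

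Differentiating in a direction $\dq \in Q$ and writing $\du = S'(q)(\dq)$ for the solution of the linearized system \eqref{eq:first_derivatives}, the chain rule yields
\begin{equation*}
  j'(q)(\dq) = \alpha \Iprod{q,\dq} + \IOprod{|\uu-\uu_d|^2(\uu-\uu_d), \du}.
\end{equation*}
The first term already matches the claim; the remaining task is to re-express the second term, which depends on $\dq$ only implicitly through $\du$, in terms of the adjoint state $\zz$ solving \eqref{eq:adjoint_L4} (whose existence I assume via the hypotheses of \Cref{thm:adjoint_solvability} on $\uu_d$).

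The core of the argument is the standard Lagrangian manipulation. I would test the adjoint equation \eqref{eq:adjoint_L4} with $\vv = \du$ and the linearized equation \eqref{eq:first_derivatives} with $\vv = \zz$; both are admissible since $\du \in \W \hookrightarrow C(\bar I;\V)$ and $\zz \in \VV$, so both lie in $\V$ for a.e.\ $t$. Integrating the term $\IOpair{-\partial_t\zz, \du}$ by parts in time and using the end conditions $\zz(T) = \oo$ and $\du(0) = \oo$ kills the boundary contributions, giving $\IOpair{-\partial_t\zz, \du} = \IOpair{\partial_t\du, \zz}$. The bilinear diffusion term and the two trilinear terms $\IOprod{(\uu\cdot\nabla)\du,\zz}$ and $\IOprod{(\du\cdot\nabla)\uu,\zz}$ then appear identically in both tested identities, so that subtracting them produces
\begin{equation*}
  \IOprod{|\uu-\uu_d|^2(\uu-\uu_d), \du} = -\sum_{i=1}^L \IOprod{\dq_i \nabla\zeta_i, \zz}
  = -\sum_{i=1}^L \Iprod{\dq_i, \Oprod{\nabla\zeta_i, \zz}}.
\end{equation*}
Substituting this into the expression for $j'(q)(\dq)$ yields exactly the stated representation.

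The main obstacle I anticipate is not the algebra but justifying the time integration by parts and the cancellation at the available regularity. Here I would rely on $\du \in \W$, so that $\partial_t\du \in L^2(I;\Hspace)$ pairs with $\zz \in L^2(I;\V)$, while $\partial_t\zz \in L^2(I;\V^*)$ pairs by duality with $\du \in L^2(I;\V)$; the integration-by-parts formula for functions of this Lions--Magenes type is classical, and the pointwise-in-time evaluations are legitimate because of the continuous embeddings into $C(\bar I;\V)$ and $C(\bar I;\Hspace)$. Finiteness and time-integrability of all trilinear terms follows from \eqref{eq:trilinear_l4_est} together with the $\W$- and $\VV$-regularity of the respective factors, so every pairing written above is well defined.
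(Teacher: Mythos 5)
Your proposal is correct and follows essentially the same route as the paper: the chain rule gives $j'(q)(\dq) = \alpha\Iprod{q,\dq} + \IOprod{|\uu-\uu_d|^2(\uu-\uu_d),S'(q)(\dq)}$, and the second term is converted via the standard duality between the adjoint equation \eqref{eq:adjoint_L4} and the linearized equation \eqref{eq:first_derivatives} together with integration by parts in time. The paper states this cancellation in one line, so your additional care in justifying the pairings and the time integration by parts at the available regularity is a welcome (but not divergent) elaboration.
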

  \begin{proof}
    By differentiating, we obtain
    \begin{equation*}
       j'(q)(\dq) = \partial_q J(q,S(q))(\dq) + \partial_u J(q,S(q))\cdot S'(q)(\dq)
      = \alpha \Iprod{q,\dq} + \IOprod{|\uu-\uu_d|^2(\uu-\uu_d),S'(q)(\dq)}.
    \end{equation*}
    With the definitions of the adjoint state $\zz$ and $S'(q)(\dq)$, \revD{as well as 
    integration by parts in time,} the claimed form immediately follows.
  \end{proof}
  It is straightforward to check that a local solution $\bar q$ to \revB{\eqref{eq:reduced_problem}} satisfies
  \begin{equation}\label{eq:first_order_opt_cond}
   j'(\bar q)(q- \bar q) \ge 0 \quad \textforall q \in \revA{\Qbox}.
  \end{equation}
  From this we deduce that 
  \begin{align*}
     \bar q_i(t) = q_{a,i} &\Rightarrow \alpha \bar q_i(t) \revB{-} \Oprod{\nabla \zeta_i,\zbar(t)} \ge 0
     \revA{\quad \text{for a.e. } t \in I,}\\
     q_{a,i} < \bar q_i(t) < q_{b,i} &\Rightarrow \alpha \bar q_i(t) \revB{-} \Oprod{\nabla \zeta_i, \zbar(t)} = 0
     \revA{\quad \text{for a.e. } t \in I,}\\
     \bar q_i(t) = q_{b,i} &\Rightarrow \alpha \bar q_i(t) \revB{-} \Oprod{\nabla \zeta_i,\zbar(t)} \le 0
     \revA{\quad \text{for a.e. } t \in I.}
  \end{align*}
  Conversely \revA{for almost every $t \in I$, there holds}
  \begin{equation}\label{eq:bang_bang_structure}
     \alpha \bar q_i(t) \revB{-} \Oprod{\nabla \zeta_i, \zbar(t)} < 0 \Rightarrow \bar q_i(t) = q_{b,i}
     \qquad \text{and} \qquad
     \alpha \bar q_i(t) \revB{-} \Oprod{\nabla \zeta_i, \zbar(t)} > 0 \Rightarrow \bar q_i(t) = q_{a,i}.
  \end{equation}
  All in all, we have just shown, that \revC{an} optimal control $\qbar$ satisfies the representation
  \begin{equation}\label{eq:qbar_projection_formula}
    \bar q_i = P_{[q_{a,i},q_{b,i}]} \left(\frac{1}{\alpha} \Oprod{\nabla \zeta_i,\zbar}\right) 
    = P_{[q_{a,i},q_{b,i}]}\left(\frac{1}{\alpha} \Gprodj{\zbar , \nn}{\Gamma_{N,i}}\right) \quad i=1,...,L,
  \end{equation}
  where $P_{[c,d]}(y) := \max\{\min\{y,d\},c\}$ denotes the projection of $y$ onto the interval $[c,d]$.
  Note that using the construction of \Cref{lemm:boundary_to_rhs},
  we can write $\qbar$ equivalently in two different ways depending on $\zbar$,
  where the more intuitive representation might be the boundary integral one, whereas the domain integral 
  \revB{form} is the one allowing us to deduce more regularity of $\qbar$.
  \revB{Let us summarize the first order optimality conditions, which we have derived so far.}
  \begin{theorem}\label{thm:adjoint_regularity}
  Let $\qbar \in \Qad$ be \revA{a local minimum of} \eqref{eq:reduced_problem}. 
  Then there exist $\ubar = S(\qbar)$ and $\zbar$ such that 
  \begin{itemize}
     \item $\ubar \in \W$ solves the state equation
       \begin{equation*}
        \left\lbrace
   \begin{aligned}
     \Oprod{\partial_t \ubar,\vv} + \Oprod{\nabla \ubar,\nabla \vv} + \Oprod{(\ubar\cdot \nabla)\ubar,\vv} &=
     - \Oprod{\sum_{i=1}^L \qbar_i \nabla \zeta_i,\vv} \quad \textforall \vv \in \V,  \alev t \in I,\\
   \ubar(0) &= \uu_0,\\
   \end{aligned}
 \right.
       \end{equation*}
   \item $\zbar \in \VV$ solves the adjoint equation
          \begin{equation*}
           \left\{
         \begin{aligned}
            \revB{\Opair{-\partial_t \zbar, \vv}} + \Oprod{\nabla \zbar,\nabla \vv} 
           + \Oprod{(\ubar\cdot \nabla)\vv + (\vv \cdot \nabla)\ubar,\zbar}
            &= \Oprod{|\ubar-\uu_d|^2(\ubar-\uu_d),\vv} \quad \textforall \vv \in \V, \alev t \in I,\\
            \zbar(T) &= 0.\\
         \end{aligned}
         \right.
          \end{equation*}
    \item $\qbar \in H^{\frac{1}{2}}(I)^L \cap C(\bar I)^L$ satisfies the projection formula
       \begin{equation*}
          \qbar_i = P_{[q_{a,i},q_{b,i}]}\left( \frac{1}{\alpha} \Oprod{\nabla \zeta_i,\zbar}\right)
     \qquad i=1,...,L.
       \end{equation*}
  \end{itemize}
\revD{Moreover, $\ubar$ posesses an associated pressure $p_{\bar \uu} \in L^2(I;H^1(\Om))$
  in the sense of \Cref{thm:NS_H2}.}
\end{theorem}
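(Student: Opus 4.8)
This statement collects the ingredients assembled in the preceding discussion and adds one genuinely new assertion, namely the time regularity $\qbar \in H^{1/2}(I)^L \cap C(\bar I)^L$ of the optimal control. The plan is therefore to cite the already-proven pieces and then to concentrate the actual work on this control regularity.

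First I would dispatch the three itemized claims and the pressure statement by reference. Since $\qbar$ is a local minimum of \eqref{eq:reduced_problem}, by definition $\qbar \in \Qad$ and $\ubar = S(\qbar) \in \W$ solves the state equation, where the regularity $\ubar \in \W$ and the existence of the associated pressure $p_{\bar\uu} \in L^2(I;H^1(\Om))$ are exactly the content of \Cref{thm:NS_H2}. With $\ubar \in \W$ at hand, \Cref{thm:adjoint_solvability} supplies the unique adjoint $\zbar \in \VV$ solving the adjoint equation. The projection formula is \eqref{eq:qbar_projection_formula}, which was already derived from the variational inequality \eqref{eq:first_order_opt_cond} together with the gradient representation of \Cref{thm:optimality_condition}.

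The core of the argument is then the regularity of $\qbar$, which I would read off the projection formula $\qbar_i = P_{[q_{a,i},q_{b,i}]}(\tfrac{1}{\alpha}\Oprod{\nabla\zeta_i,\zbar})$, treating the two regularities separately (note that in one dimension $H^{1/2}(I)$ does not embed into $C(\bar I)$, so continuity cannot simply be inferred from the fractional estimate). For continuity I would argue that, since $\zbar \in \VV \hookrightarrow C(\bar I;\Hspace)$ and $\nabla\zeta_i \in L^2(\Om)^2$ induces a bounded linear functional on $\Hspace$, the scalar map $g_i := \Oprod{\nabla\zeta_i,\zbar(\cdot)}$ lies in $C(\bar I)$; as the interval projection is Lipschitz it preserves continuity, giving $\qbar_i \in C(\bar I)$. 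For the fractional regularity I would interpolate: the intermediate derivative theorem \eqref{eq:intermediate_derivative_theorem} applied with $X=\V$, $Y=\V^*$, $s=1$, $\theta=\tfrac12$, combined with the Gelfand-triple identity $[\V,\V^*]_{1/2}=\Hspace$ already underlying \eqref{eq:space_VV_definition}, yields $\zbar \in H^{1/2}(I;\Hspace)$; composing with the bounded functional $\Oprod{\nabla\zeta_i,\cdot}$ then gives $g_i \in H^{1/2}(I)$.

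The step needing the most care, and the one I expect to be the main obstacle, is the final composition with $P_{[q_{a,i},q_{b,i}]}$ at the fractional level. Here I would exploit that for $0<s<1$ a globally Lipschitz scalar function acts boundedly on $H^s(I)$: because $|P(a)-P(b)|\le|a-b|$ pointwise, the Gagliardo seminorm of $P(g_i/\alpha)$ is dominated by that of $g_i/\alpha$, while the $L^2(I)$ part is controlled since $g_i \in C(\bar I)$ is bounded on the finite interval $I$ and $P$ maps bounded functions to bounded ones. Putting both bounds together yields $\qbar_i \in H^{1/2}(I)$, which together with the continuity completes the proof.
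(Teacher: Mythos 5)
Your proposal is correct and follows essentially the same route as the paper: cite \Cref{thm:NS_H2} and \Cref{thm:adjoint_solvability} for $\ubar$, $p_{\bar\uu}$ and $\zbar$, deduce $\zbar \in C(\bar I;\Hspace)\cap H^{1/2}(I;\Hspace)$ from $\zbar\in\VV$, and push this through the projection formula \eqref{eq:qbar_projection_formula}. The only difference is that the paper cites the $H^{1/2}$-stability of the pointwise projection (from Kunisch--Vexler, Lemma 3.3), whereas you prove it directly via the $1$-Lipschitz bound on the Gagliardo seminorm --- a valid and slightly more self-contained alternative.
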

\begin{proof}
  The regularity of \revD{$(\ubar, p_{\bar \uu})$} was shown in \Cref{thm:NS_H2}, the one of $\zbar$ in \Cref{thm:adjoint_solvability}.
  Regarding the expression for $\qbar$, note that 
  \begin{equation*}
    \zbar \in L^2(I;\V) \cap H^1(I;\V^*) \Rightarrow \zbar \in C(\bar I;\Hspace) \cap H^{\frac{1}{2}}(I;\Hspace). 
  \end{equation*}
  Together with the explicit representation of $\qbar$ in terms of $\zbar$, shown in 
  \eqref{eq:qbar_projection_formula}, \revA{and the fact, that the projection onto the 
     set of feasible controls is stable in $H^{\frac12}(I)^L$, see e.g. 
  \cite[Lemma 3.3]{kunisch_constrained_2007}, }
  this concludes the proof.
\end{proof}
\revD{Note that in the above theorem, we could also have introduced an associated pressure to
  $\zbar$. As the optimal control $\qbar$ only depends on $\zbar$, for our application an 
  adjoint pressure has no direct significance.
  Hence, in the following we shall only discuss the primal pressure.}
\subsection{Higher regularity of optimal variables}
\revA{The regularity results, obtained in \Cref{thm:adjoint_regularity}, were all deduced by assuming only
the minimum regularity of $\qbar$ given by the feasibility condition $\qbar \in \Qad \subset L^2(I)^L$ and 
applying the optimality conditions. As in the third result of \Cref{thm:adjoint_regularity}, we obtain an 
improved regularity of $\qbar$, we can iterate this process, to obtain higher regularity of 
$(\revD{\qbar,\ubar, p_{\ubar},\zbar})$.
The main difficulty here will lie in the discussion of the adjoint state $\zbar$.}
Let us recall some regularity results for auxiliary problems.
The first two are regularity results for a stationary Stokes problem with mixed boundary conditions 
and nonzero divergence. In the following we use the notation
\begin{equation*}
   \widehat{\V} := \{\vv \in H^1(\Om)^2: \vv|_{\Gamma_D} = \oo\}.
\end{equation*}
   Then the weak formulation of 
  \begin{equation}\label{eq:stat_stokes_inhom_divergence}
    \left\lbrace
    \begin{aligned}
       -\Delta \ww + \nabla p_\ww &= \ff && \textin \Omega,\\
       \nabla \cdot \ww &= \rho && \textin \Omega,\\
       \ww &= \oo && \texton \Gamma_D,\\
       -\partial_\nn \ww + p_\ww \nn &= \oo  &&\texton \Gamma_N
    \end{aligned}
      \right.
  \end{equation}
  \revC{for $f \in \widehat{\V}^*$ and $\rho \in L^2(\Om)$} 
  reads: Find $(\ww,p_\ww) \in \widehat{\V} \times L^2(\Om)$, such that 
  \begin{equation}\label{eq:weak_stat_stokes_inhom_divergence}
    \Oprod{\nabla \ww,\nabla \vv} - \Oprod{\nabla \cdot \vv, p_\ww} + \Oprod{\nabla \cdot \ww,p_\vv} = 
    \Opair{\ff,\vv} + \Oprod{\rho,p_\vv}
    \quad \textforall (\vv,p_\vv) \in \widehat{\V} \times L^2(\Om).
  \end{equation}
\begin{lemma}\label{lemm:stat_stokes_inhom_divergence}
   Let $\ff \in \widehat{\V}^*$ and $\rho \in L^2(\Om)$.
   \revB{Then \eqref{eq:weak_stat_stokes_inhom_divergence} has}
  a unique solution $(\ww,p_\ww) \in \widehat{V} \times L^2(\Om)$, and there exists a constant 
     $C>0$ such that 
  \begin{equation*}
     \|\ww\|_{H^1(\Om)} + \|p_\ww\|_{L^2(\Om)} \le 
     C\left(\|\ff\|_{\widehat{\V}^*} + \|\rho\|_{L^2(\Om)}\right).
  \end{equation*}
  If moreover, $\ff \in L^2(\Om)^2$ and $\rho \in H^1(\Om)$, it holds 
  $(\ww,p_\ww) \in \left[\revB{\widehat{\V} \cap H^2(\Om)^2}\right] \times H^1(\Om)$ and
  there exists a constant $C>0$ such that 
  \begin{equation*}
     \|\ww\|_{H^2(\Om)} + \|\nabla p_\ww\|_{L^2(\Om)} \le 
     C\left(\|\ff\|_{L^2(\Om)} + \|\rho\|_{H^1(\Om)}\right).
  \end{equation*}
\end{lemma}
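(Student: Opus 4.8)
The plan is to read \eqref{eq:weak_stat_stokes_inhom_divergence} as a linear saddle point problem, establish the first assertion by the Babu\v{s}ka--Brezzi theory, and then reduce the second assertion to the regularity result of \Cref{thm:mixed_Stokes_h2} by lifting the inhomogeneous divergence. For the well-posedness I would set $a(\ww,\vv):=\Oprod{\nabla\ww,\nabla\vv}$ and $b(\vv,q):=-\Oprod{\nabla\cdot\vv,q}$, so that \eqref{eq:weak_stat_stokes_inhom_divergence} decouples into $a(\ww,\vv)+b(\vv,p_\ww)=\Opair{\ff,\vv}$ for all $\vv\in\widehat{\V}$ and $b(\ww,q)=-\Oprod{\rho,q}$ for all $q\in L^2(\Om)$. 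The form $a$ is coercive on the kernel $\ker b=\V$: since $\Gamma_D$ has positive surface measure, the Poincar\'e--Friedrichs inequality gives $\Oprod{\nabla\vv,\nabla\vv}\ge c\|\vv\|_{H^1(\Om)}^2$ on $\widehat{\V}$. The remaining ingredient is the inf-sup (LBB) condition: the existence of $\beta>0$ with $\sup_{\vv\in\widehat{\V}}|\Oprod{\nabla\cdot\vv,q}|/\|\vv\|_{H^1(\Om)}\ge\beta\|q\|_{L^2(\Om)}$ for all $q\in L^2(\Om)$. Granting both, Brezzi's theorem delivers the unique pair $(\ww,p_\ww)\in\widehat{\V}\times L^2(\Om)$ and the first estimate.

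The inf-sup condition is equivalent to the surjectivity of $\nabla\cdot\colon\widehat{\V}\to L^2(\Om)$ onto the \emph{full} space (not merely its mean-zero part). I would argue this in two steps. On the one hand $\widehat{\V}\supset H_0^1(\Om)^2$, and the classical bounded right inverse of the divergence maps onto $\{g\in L^2(\Om):\int_\Om g=0\}$. On the other hand, since the trace on $\Gamma_N$ is unconstrained, one can fix a single $\vv_*\in\widehat{\V}$ with $\int_\Om\nabla\cdot\vv_*=\int_{\Gamma_N}\vv_*\cdot\nn\neq0$, which supplies the missing constant mode; here $\Gamma_N\neq\emptyset$ is used. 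Combined with the closed range theorem this yields surjectivity and a bounded right inverse, hence the quantitative inf-sup constant. This is precisely the stability that underlies the uniqueness of the pressure in all of $L^2(\Om)$ remarked upon after \Cref{thm:mixed_Stokes_h2}, and may alternatively be cited from \cite{mazya_l_2007}.

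For the higher regularity I would construct a divergence lifting $\ww_\rho\in\widehat{\V}\cap H^2(\Om)^2$ with $\nabla\cdot\ww_\rho=\rho$, $\ww_\rho|_{\Gamma_D}=\oo$, $\partial_\nn\ww_\rho|_{\Gamma_N}=\oo$ and $\|\ww_\rho\|_{H^2(\Om)}\le C\|\rho\|_{H^1(\Om)}$. Because each $\GNi$ is a straight segment with constant normal, such a lift can be built locally in boundary-adapted coordinates, letting the tangential component carry the prescribed divergence so that the entire normal derivative vanishes on $\GNi$, and then patched to an interior lift of Bogovski\u{\i} type through a partition of unity, correcting the lower-order divergence contributions created by the cutoffs. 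Setting $\ww_0:=\ww-\ww_\rho$ removes the inhomogeneous divergence: $\ww_0$ is solenoidal, vanishes on $\Gamma_D$, and since $\partial_\nn\ww_\rho=\oo$ on $\Gamma_N$ it satisfies the homogeneous condition $-\partial_\nn\ww_0+p_\ww\nn=\oo$ there. Thus $(\ww_0,p_\ww)$ is the weak solution of \eqref{eq:Stokes_stationary} with right-hand side $\ff+\Delta\ww_\rho\in L^2(\Om)^2$, and \Cref{thm:mixed_Stokes_h2} gives $\ww_0\in H^2(\Om)^2$ and $p_\ww\in H^1(\Om)$ with the corresponding bound; adding $\ww_\rho$ back yields the claimed regularity and estimate.

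The main obstacle is exactly the construction of this regularity-gaining lifting that is simultaneously compatible with both boundary conditions. A naive potential ansatz $\ww_\rho=\nabla\phi$ loses a derivative (it would require $\phi\in H^3$) and cannot meet the full Dirichlet condition on $\Gamma_D$, whereas a pure Bogovski\u{\i} lift produces the wrong, inhomogeneous normal derivative on $\Gamma_N$. Reconciling these requirements by exploiting the flatness of the Neumann segments and the right-angle assumption of \Cref{ass:domain}, and ensuring that the partition-of-unity corrections stay in the correct spaces with mean-zero remainders amenable to the Bogovski\u{\i} inverse, is the delicate part of the argument; everything else is the standard saddle-point machinery above.
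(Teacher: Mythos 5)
Your first assertion is handled exactly as the paper does it: the paper simply cites the standard saddle--point theory (Boffi--Brezzi--Fortin) for unique solvability, and your verification of coercivity on the kernel and of the inf--sup condition (surjectivity of the divergence onto all of $L^2(\Om)$, using $H^1_0(\Om)^2$ for the mean--zero part and one field with nonzero flux through $\Gamma_N$ for the constant mode) is a correct expansion of that citation.

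The second assertion, however, contains a genuine gap: the lifting you propose cannot exist in general. Suppose $\ww_\rho\in H^2(\Om)^2$ satisfies $\nabla\cdot\ww_\rho=\rho$, $\ww_\rho|_{\Gamma_D}=\oo$ and $\partial_\nn\ww_\rho|_{\Gamma_N}=\oo$. On a flat segment $\GNi$ with constant frame $(\ttau,\nn)$ one has $\nabla\cdot\ww_\rho=\partial_{\ttau}(\ww_\rho\cdot\ttau)+\partial_\nn(\ww_\rho\cdot\nn)$, and the condition $\partial_\nn\ww_\rho|_{\GNi}=\oo$ kills the second term, so the trace of $\rho$ on $\GNi$ equals $\partial_{\ttau}\bigl(\ww_\rho\cdot\ttau|_{\GNi}\bigr)$. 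Since $H^2(\Om)\hookrightarrow C^{0,\alpha}(\bar\Om)$ in 2D and $\ww_\rho$ vanishes on the adjacent Dirichlet pieces, the trace $\ww_\rho\cdot\ttau$ vanishes at both endpoints of $\GNi$, whence
\begin{equation*}
   \int_{\GNi}\rho\,ds=\int_{\GNi}\partial_{\ttau}\bigl(\ww_\rho\cdot\ttau\bigr)\,ds=0 .
\end{equation*}
So already $\rho\equiv1$ admits no such lift, and no amount of care with boundary--adapted coordinates or Bogovski\u{\i} corrections can circumvent this integral obstruction. Note that the obstruction does not contradict the lemma itself, because the actual solution $\ww$ satisfies $\partial_\nn(\ww\cdot\nn)=p_\ww\neq 0$ on $\Gamma_N$; it only shows that the inhomogeneous divergence cannot be removed while keeping the Neumann condition homogeneous. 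Relaxing to $\partial_\nn\ww_\rho=c\,\nn$ (which still reduces to \eqref{eq:Stokes_stationary} after shifting the pressure by $c$) only works if all averages $|\GNi|^{-1}\int_{\GNi}\rho$ coincide, so it does not repair the argument for $L\ge 2$. The paper avoids any global divergence lifting altogether: it obtains the $H^2\times H^1$ regularity by a localization argument near the corners, citing the treatment in the appendix of Lasiecka et al.\ for a rectangle, which transfers to the present geometry by \Cref{ass:domain}. If you want a self-contained proof along your lines, you would have to either carry the nonzero Neumann remainder $\partial_\nn\ww_\rho|_{\Gamma_N}$ through as inhomogeneous Neumann data (which requires a regularity theorem for the mixed Stokes problem with inhomogeneous Neumann data that is not available at this point of the paper) or switch to a localization argument.
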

\begin{proof}
   The unique solvability is a standard result, and can be found in, e.g.,
   \cite[Theorem 4.2.3]{boffi_mixed_2013}. The higher regularity result can be shown by a localization 
   argument, which is carried out in \cite[Appendix A.3]{lasiecka_boundary_2018} for a rectangular domain, but 
   due to \Cref{ass:domain} the result holds in our setting as well.
\end{proof}
In the case, that the right hand side $\rho$ of the divergence condition is already the divergence of 
some known vector field, by a duality argument, we obtain a bound for the $L^2$ norm of the velocity field
of the stationary problem.
\begin{theorem}\label{thm:stationary_stokes_inhom_divergence_l2_est}
   Let $\EE \in H^1_0(\Om)^2$ and \revA{$\ff \in \widehat{\V}^*$.}
   Then the velocity component $\ww \in \widehat{\V}$ of the weak solution to
   \revB{\eqref{eq:weak_stat_stokes_inhom_divergence}} with $\rho = \nabla \cdot \EE$ satisfies
  \begin{equation*}
     \|\ww\|_{L^2(\Om)} \le C \left(\|\EE\|_{L^2(\Om)} + \revA{\|\ff\|_{\widehat{\V}^{-2}}}\right),
  \end{equation*}
  \revA{where we use the notation $\widehat{\V}^{-2} := (\widehat{\V}\cap H^2(\Om)^2)^*$.}
\end{theorem}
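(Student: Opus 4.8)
The plan is to run an Aubin--Nitsche type duality argument, exploiting that the divergence datum is itself the divergence of an $L^2$ field. For an arbitrary $\gg \in L^2(\Om)^2$ I would introduce the dual Stokes problem
\begin{equation*}
\left\lbrace
\begin{aligned}
-\Delta \ppsi + \nabla \pi &= \gg && \textin \Om,\\
\nabla \cdot \ppsi &= 0 && \textin \Om,\\
\ppsi &= \oo && \texton \Gamma_D,\\
-\partial_\nn \ppsi + \pi \nn &= \oo && \texton \Gamma_N,
\end{aligned}
\right.
\end{equation*}
which is of the divergence-free type \eqref{eq:Stokes_stationary}. Its weak form is \eqref{eq:weak_stat_stokes_inhom_divergence} with $\rho = 0$ and right-hand side $\gg$, and since the bilinear form is symmetric this is again a mixed Stokes problem, so \Cref{thm:mixed_Stokes_h2} provides a unique solution with $(\ppsi,\pi) \in [\widehat{\V}\cap H^2(\Om)^2]\times H^1(\Om)$ satisfying $\|\ppsi\|_{H^2(\Om)} + \|\pi\|_{H^1(\Om)} \le C\|\gg\|_{L^2(\Om)}$.

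Next I would test the dual weak formulation with $(\ww,p_\ww)$ and the primal formulation \eqref{eq:weak_stat_stokes_inhom_divergence} with $(\ppsi,\pi)$. In both identities the coupling term $\Oprod{\nabla \cdot \ppsi,\cdot}$ drops out because $\ppsi$ is solenoidal, leaving
\begin{align*}
\Oprod{\gg,\ww} &= \Oprod{\nabla \ppsi,\nabla \ww} - \Oprod{\nabla \cdot \ww,\pi},\\
\Oprod{\nabla \ww,\nabla \ppsi} + \Oprod{\nabla \cdot \ww,\pi} &= \Opair{\ff,\ppsi} + \Oprod{\rho,\pi}.
\end{align*}
Eliminating the common gradient term and inserting $\nabla \cdot \ww = \rho$, the two pressure contributions combine into the clean identity
\begin{equation*}
\Oprod{\gg,\ww} = \Opair{\ff,\ppsi} - \Oprod{\rho,\pi}.
\end{equation*}
The point of this step is that the unknown primal pressure $p_\ww$, of which we only control the $L^2$ norm, cancels entirely, so no regularity of $p_\ww$ is needed.

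The decisive step is then to integrate by parts in the divergence term. Writing $\rho = \nabla \cdot \EE$ and using $\EE \in H^1_0(\Om)^2$, which vanishes on all of $\partial\Om$ (in particular on $\Gamma_N$), the boundary contribution $\Gprod{\EE \cdot \nn,\pi}$ disappears and $\Oprod{\rho,\pi} = -\Oprod{\EE,\nabla \pi}$, whence
\begin{equation*}
\Oprod{\gg,\ww} = \Opair{\ff,\ppsi} + \Oprod{\EE,\nabla \pi}.
\end{equation*}
Bounding the right-hand side by $\|\ff\|_{\widehat{\V}^{-2}}\|\ppsi\|_{H^2(\Om)} + \|\EE\|_{L^2(\Om)}\|\nabla \pi\|_{L^2(\Om)}$ (the first pairing being admissible since $\ppsi \in \widehat{\V}\cap H^2(\Om)^2$), then invoking the dual regularity bound and taking the supremum over $\gg$ with $\|\gg\|_{L^2(\Om)} = 1$, yields the claim. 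I expect the integration by parts to be the crux: it is what converts the datum $\rho$, a priori only controlled in $L^2$ by $\|\EE\|_{H^1(\Om)}$, into a pairing of $\nabla \pi \in L^2(\Om)$ against $\EE \in L^2(\Om)$, producing exactly the gain from $H^1$ to $L^2$ in $\EE$; the hypothesis $\EE \in H^1_0(\Om)^2$ rather than merely $\EE \in \widehat{\V}$ is precisely what makes the boundary term vanish.
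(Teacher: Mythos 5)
Your argument is correct and is essentially the paper's own proof: a duality argument with the homogeneous mixed Stokes problem, using its $H^2\times H^1$ regularity and integrating $\Oprod{\nabla\cdot\EE,\pi}$ by parts against the dual pressure, with $\EE\in H^1_0(\Om)^2$ killing the boundary term. The only cosmetic differences are that the paper directly picks the dual datum equal to $\ww$ instead of taking a supremum over $\gg$, and writes the dual problem in transposed form (which merely flips the sign of the dual pressure).
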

\begin{proof}
  We use the above lemma in a dual formulation: given $\ppsi \in L^2(\Om)^2$, 
  let $(\zz^\ppsi,p_\zz^\ppsi) \in \widehat{\V}\times L^2(\Om)$ satisfy
  \begin{equation*}
     \Oprod{\nabla \vv,\nabla \zz^\ppsi} - \Oprod{\nabla \cdot \zz^\ppsi, p_\vv} 
     + \Oprod{\nabla \cdot \vv,p_\zz^\ppsi} = 
    \Oprod{\ppsi,\vv}
    \quad \textforall (\vv,p_\vv) \in \widehat{\V} \times L^2(\Om).
  \end{equation*}
  Then it holds 
  \begin{equation*}
     \|\zz^\ppsi\|_{H^2(\Om)} + \|\nabla p_\zz^\ppsi\|_{L^2(\Om)} \le \revB{C}\|\ppsi\|_{L^2(\Om)}.
  \end{equation*}
  Thus \revB{choosing $\ppsi = \ww$}, using the dual and primal equations,
  integration by parts, $\EE|_{\partial \Om} = 0$ 
  and the \revA{above estimate of $\zz^\ppsi$ and $\nabla p_\zz^\ppsi$ in terms of }
  the $L^2$ norm of $\ppsi$, \revB{yields}
  \revB{
  \begin{align*}
    \|\ww\|_{L^2(\Om)}^2
    & = \Oprod{\ww,\ppsi}
    = \Oprod{\nabla \ww,\nabla \zz^\ppsi} 
    - \Oprod{\nabla \cdot \zz^\ppsi, p_\ww} + \Oprod{\nabla \cdot \ww,p_\zz^\ppsi}
    = \Opair{\ff,\zz^\ppsi} + \Oprod{\rho,p_\zz^\ppsi}\\
    & = \Opair{\ff,\zz^\ppsi} + \Oprod{\nabla \cdot \EE,p_\zz^\ppsi}
    =\Opair{\ff,\zz^\ppsi} -\Oprod{\EE, \nabla p_\zz^\ppsi}
    \le C \left(\|\EE\|_{L^2(\Om)} + \|\ff\|_{\widehat{\V}^{-2}}\right)\|\ppsi\|_{L^2(\Om)}.
  \end{align*}
  Dividing by $\|\ppsi\|_{L^2(\Om)} = \|\ww\|_{L^2(\Om)}$ concludes the proof.
  }
\end{proof}
\revA{Due to the above results, the mapping $H^1_0(\Om)^2 \ni \EE \mapsto \ww \in L^2(\Om)^2$, where 
   $\ww$ solves \eqref{eq:stat_stokes_inhom_divergence} with $\rho = \nabla \cdot \EE$ and $\ff = 0$,
   is a well defined, linear operator.
   Moreover, by density of $H^1_0(\Om) \subset L^2(\Om)$ and the stability estimate
   of \Cref{thm:stationary_stokes_inhom_divergence_l2_est}, this mapping extends to a 
   continuous linear operator
   \begin{equation}\label{eq:definition_operator_T}
      R\colon L^2(\Om)^2 \to L^2(\Om)^2, \text{ satisfying } \|R(\EE)\|_{L^2(\Om)} \le C \|\EE\|_{L^2(\Om)} 
      \quad \text{for all } \EE \in L^2(\Om)^2.
   \end{equation}
   As for $\EE \in H^1_0(\Om)^2$, $R(\EE)$ coincides with the composition of the divergence operator and 
   the solution operator of \eqref{eq:stat_stokes_inhom_divergence}, by \Cref{lemm:stat_stokes_inhom_divergence}
   there hold the bounds
   \begin{equation}\label{eq:regularity_operator_T}
      \|R(\EE)\|_{H^1(\Om)} \le C \|\EE\|_{H^1(\Om)} \revC{\text{ for } \EE \in H^1_0(\Om)^2}
      \ \text{ and } \
       \|R(\EE)\|_{H^2(\Om)} \le C \|\EE\|_{H^2(\Om)} \revC{\text{ for } \EE \in (H^2(\Om) \cap H^1_0(\Om))^2}.
   \end{equation}
}
Let us now recall some instationary results. Since we have already seen that $\qbar \in \revB{C(\bar I)^L}$,
by maximal parabolic regularity, $\ubar$ will also turn out to be more regular in time. 
\revD{For the instationary Stokes equations, we have the following result, expressed in terms of 
the stationary Stokes operator $A$ introduced in \eqref{eq:def_Stokes_operator}.}
\begin{theorem}\label{thm:max_par_reg}
  Let $s \in (1,\infty)$, $\ff \in L^s(I;L^2(\Om)^2)$ and 
  $\uu_0 \in \V_s := (\Hspace,\V\cap H^2(\Om)^2)_{1-1/s,s}$.
   Then the weak solution 
   $\ww \in L^2(I;\V)$ of 
   \begin{equation*}
      \Opair{\partial_t \ww,\vv} + \Oprod{\nabla \ww,\nabla \vv} = \Oprod{\ff,\vv} \quad
      \revD{\textforall \vv \in \V, \alev t \in I}
   \end{equation*}
   and $\ww(0) = \uu_0$
   satisfies $\partial_t \ww \in L^s(I;L^2(\Om)^2)$, \revA{$A\ww \in L^s(I;L^2(\Om)^2)$} and there holds
   \begin{equation*}
      \|\partial_t \ww\|_{L^s(I;L^2(\Om))} + \revA{\|A\ww \|_{L^s(I;L^2(\Om))}}
      \le C \left(\|\ff\|_{L^s(I;L^2(\Om))} + \|\uu_0\|_{\V_s}\right).
   \end{equation*}
   Furthermore, due to \Cref{ass:domain}, there holds $\ww \in L^s(I;H^2(\Om)^2)$, with a bound
   \begin{equation*}
      \|\ww \|_{L^s(I;H^2(\Om))}\le C \left(\|\ff\|_{L^s(I;L^2(\Om))} + \|\uu_0\|_{\V_s}\right).
   \end{equation*}
\end{theorem}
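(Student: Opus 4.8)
The plan is to recognise the problem as an instance of abstract maximal parabolic regularity for the Stokes operator $A$ from \eqref{eq:def_Stokes_operator} and to exploit that we work on a Hilbert space. First I would record that $A$ is induced by the symmetric, $\V$-coercive bilinear form $(\nabla \uu, \nabla \vv)$, coercivity on $\V$ being a consequence of the Poincaré-type inequality made available by the Dirichlet part $\Gamma_D$. Hence $A$ is a positive, self-adjoint operator on $\Hspace$, and $-A$ generates a bounded analytic semigroup. Next I would identify its domain: by \Cref{thm:mixed_Stokes_h2} every $\uu \in \mathcal D(A)$ satisfies $\uu \in \V \cap H^2(\Om)^2$ together with $\|\uu\|_{H^2(\Om)} \le C\|A\uu\|_{L^2(\Om)}$, while the reverse inclusion and bound are elementary; thus $\mathcal D(A) = \V \cap H^2(\Om)^2$ with equivalent graph norm. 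This identification is precisely where \Cref{ass:domain}, in particular the right angles at the transition points, enters, and it is the only genuinely nonsmooth-domain ingredient of the proof.

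With these structural facts, the first estimate follows from the classical theorem of de Simon, stating that a generator of a bounded analytic semigroup on a Hilbert space enjoys maximal $L^s$-regularity for every $s \in (1,\infty)$. Writing the weak equation as $\partial_t \ww + A\ww = \mathbb P \ff$ in $\Hspace$, where $\mathbb P$ is the Helmholtz projection and $\|\mathbb P\ff\|_{L^s(I;\Hspace)} \le \|\ff\|_{L^s(I;L^2(\Om))}$, maximal regularity yields $\partial_t \ww, A\ww \in L^s(I;L^2(\Om)^2)$ with the asserted bound, provided the initial datum lies in the associated trace space. That trace space is the real interpolation space $(\Hspace, \mathcal D(A))_{1-1/s,s}$, which by the domain identification is exactly $\V_s$; the admissibility of $\ww(0) = \uu_0$ and the contribution $\|\uu_0\|_{\V_s}$ are then already built into the abstract statement.

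Finally I would upgrade the $A\ww$ bound to a bound in $H^2$ in space. For almost every $t \in I$ the function $\ww(t) \in \mathcal D(A)$ is the weak solution of the stationary problem \eqref{eq:Stokes_stationary} with right-hand side $A\ww(t) \in L^2(\Om)^2$, so \Cref{thm:mixed_Stokes_h2} gives $\|\ww(t)\|_{H^2(\Om)} \le C\|A\ww(t)\|_{L^2(\Om)}$; raising this to the power $s$ and integrating over $I$ produces $\|\ww\|_{L^s(I;H^2(\Om))} \le C\|A\ww\|_{L^s(I;L^2(\Om))}$ and hence the second estimate. The main obstacle is thus not in the parabolic theory, which is purely abstract once analyticity and the trace-space characterisation are in place, but in the domain identification $\mathcal D(A) = \V \cap H^2(\Om)^2$ on the nonsmooth domain; this, however, is already supplied by the stationary regularity result \Cref{thm:mixed_Stokes_h2}, so the remaining steps are routine.
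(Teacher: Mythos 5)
Your proposal takes essentially the same route as the paper: the paper's proof consists of citing the standard maximal $L^s$-regularity of analytic generators on Hilbert spaces together with the real-interpolation characterization of the admissible initial data (via \cite[Lemma 2.4]{denk_introduction_2021}), and then invoking \Cref{thm:mixed_Stokes_h2} for the $H^2(\Om)$ upgrade --- precisely your three steps, spelled out in more detail. One caution: your claim that $\mathcal D(A) = \V \cap H^2(\Om)^2$ with the reverse inclusion being elementary is not correct as stated. For $\uu \in \V \cap H^2(\Om)^2$, integration by parts gives $\Oprod{\nabla\uu,\nabla\vv} = \Oprod{-\Delta\uu,\vv} + \Gprodj{\partial_\nn\uu,\vv}{N}$ for $\vv\in\V$, and the boundary term does not vanish unless $\uu$ satisfies the natural do-nothing condition on $\Gamma_N$ (with an associated pressure); hence $\mathcal D(A)$ is in general a proper subspace of $\V\cap H^2(\Om)^2$. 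This does not affect the $H^2$ estimate, which only uses the forward inclusion supplied by \Cref{thm:mixed_Stokes_h2}, but it means that identifying the abstract trace space $(\Hspace,\mathcal D(A))_{1-1/s,s}$ with $\V_s=(\Hspace,\V\cap H^2(\Om)^2)_{1-1/s,s}$ is not automatic: the two coincide only when $1-1/s$ lies below the threshold at which the natural boundary condition is detected by real interpolation, a point the paper's own terse proof also leaves implicit.
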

\begin{proof}
  For homogeneous initial conditions, this result is standard. For the correct space for 
  inhomogeneous initial conditions defined by real interpolation, we refer to 
  \cite[Lemma 2.4]{denk_introduction_2021}.
  \revA{The $H^2(\Om)$ regularity is a consequence of \Cref{thm:mixed_Stokes_h2}.}
\end{proof}
Since the controls in our setting are purely time dependent, in \Cref{lemm:boundary_to_rhs}
we were able to transform the 
state equation subject to inhomogeneous boundary data to one with homogeneous boundary data by 
modifying the right hand side of the equation. We will see that for the adjoint equation this is not possible.
Thus, we \revC{require} the following extension theorem of Neumann boundary data.
To this end, recall the \revA{definition of the} space 
\begin{equation*}
   H^{\frac{1}{2}}_{00}((a,b)) := \{ \phi \in H^{\frac{1}{2}}((a,b)) : \ext_0(\phi) \in H^{\frac{1}{2}}(\R)\},
\end{equation*}
where $\ext_0(\phi)$ denotes the extension of $\phi$ to $\R$ by zero on $\R\backslash (a,b)$.
The spaces $H^{\frac{1}{2}}_{00}(\GNi)$ are then defined by affinely transforming $\GNi$ to an interval in $\R$. 
\revA{By \cite[Chapter 1, Theorem 12.3]{lions_nonhomogeneous_1972_vol1}, they can equivalently be written 
as $H^{\frac12}_{00}(\GNi) = [L^2(\GNi),H^1_0(\GNi)]_{\frac12}$.}
\begin{theorem}\label{thm:instat_Neumann_extension}
   Let $\gg_i \in H^{\frac{1}{4}}(I;L^2(\GNi)^2) \cap L^2(I;H^{\frac{1}{2}}_{00}(\GNi)^2)$ 
   for $i=1,...,L$.
   Then there exists a Neumann extension $\EE_\gg$ satisfying
   \begin{equation*}
      \EE_\gg \in L^2(I;H^2(\Om)^2\cap H^1_0(\Om)^2) \cap H^1(I;L^2(\Om)^2)
      \quad \text{and} \quad
      \partial_\nn \EE_\gg |_{\GNi} = \gg_i,
   \end{equation*}
   and there holds a bound
   \begin{equation*}
      \|\partial_t \EE_\gg\|_{L^2(I;L^2(\Om))} + \|\EE_\gg\|_{L^2(I;H^2(\Om))}
      \le C \sum_{i=1}^L \|\gg_i\|_{H^{\frac{1}{4}}(I;L^2(\GNi)^2) \cap
         L^2(I;H^{\frac{1}{2}}_{00}(\GNi)^2)}.
   \end{equation*}
\end{theorem}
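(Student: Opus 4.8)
The plan is to construct $\EE_\gg$ by a localized lifting. Since no divergence constraint is imposed, the two scalar components of $\EE_\gg$ decouple and $\partial_\nn$, $H^1_0$ and $H^2$ all act componentwise, so I may reduce to a scalar problem. The statement is equivalent to exhibiting a bounded right inverse of the normal-derivative trace $\EE \mapsto \partial_\nn \EE|_{\GNi}$ onto $L^2(I;H^2(\Om)^2\cap H^1_0(\Om)^2)\cap H^1(I;L^2(\Om)^2)$. I would fix a partition of unity subordinate to a cover of each closed segment $\overline{\GNi}$; since distinct $\GNi$ are separated by portions of $\Gamma_D$, each segment is treated separately, and for a single segment I split the cover into one interior patch (away from the endpoints) and two corner patches at the right-angle junctions with $\Gamma_D$. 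The boundary cutoffs are chosen identically $1$ on a neighbourhood of the relevant boundary piece, so they do not alter the prescribed normal derivative and the commutators they produce are of lower (first) order, hence harmless.

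The core is the flat half-space model: on $\R^2_+=\{x_2>0\}$, with outer normal $-e_2$, construct for data $h\in L^2(I;H^{1/2}(\R))\cap H^{1/4}(I;L^2(\R))$ a function $E$ with $E|_{x_2=0}=0$, $-\partial_{x_2}E|_{x_2=0}=h$ and $E\in L^2(I;H^2(\R^2_+))\cap H^1(I;L^2(\R^2_+))$. I would do this explicitly: after a partial Fourier transform in the tangential variable $x_1\mapsto\xi$ and in time $t\mapsto\tau$, set
\[
   \hat E(\xi,x_2,\tau):=-\,\hat h(\xi,\tau)\,x_2\,e^{-x_2\rho(\xi,\tau)},
   \qquad \rho(\xi,\tau):=(1+|\xi|^2+|\tau|)^{\frac12}.
\]
The factor $x_2$ gives $E|_{x_2=0}=0$, and differentiation yields $\partial_{x_2}\hat E(\xi,0,\tau)=-\hat h$, i.e.\ $-\partial_{x_2}E|_{x_2=0}=h$. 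Membership in $L^2(I;H^2)\cap H^1(I;L^2)$ amounts to $\rho^2\hat E$, $\rho\,\partial_{x_2}\hat E$ and $\partial_{x_2}^2\hat E$ lying in $L^2(d\xi\,dx_2\,d\tau)$; performing the elementary $x_2$-integrals (e.g.\ $\int_0^\infty x_2^2 e^{-2x_2\rho}\,dx_2=\tfrac14\rho^{-3}$, and its analogues for the derivatives) reduces each term to a constant multiple of $\int\rho\,|\hat h|^2\,d\xi\,d\tau=\int(1+|\xi|^2+|\tau|)^{1/2}|\hat h|^2$, which is exactly the squared norm of $h$ in $L^2(I;H^{1/2}(\R))\cap H^{1/4}(I;L^2(\R))$. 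This encodes the parabolic trace-scaling that a spatial trace costs half a spatial derivative, equivalently a quarter of a time derivative.

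The only genuinely delicate point is the corner patches, where the Neumann datum on $\GNi$ must be reconciled with the homogeneous Dirichlet condition on the adjacent $\Gamma_D$. By \Cref{ass:domain} the two pieces meet at a right angle, so locally $\Om$ is the quarter-plane $\{x_1>0,\,x_2>0\}$ with $\GNi\subset\{x_2=0\}$ and $\Gamma_D\subset\{x_1=0\}$. I would enforce the Dirichlet condition on $\{x_1=0\}$ by odd reflection across that edge: extending the (localized) datum by odd reflection in $x_1$ reduces the quarter-plane problem to the flat half-space model above, and since $\rho$ is even in $\xi$ the resulting extension inherits oddness in $x_1$ and therefore vanishes on $\{x_1=0\}$; restricting back to the quarter-plane yields an extension with zero trace on both edges and the prescribed normal derivative on $\GNi$. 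The crucial compatibility is that the odd (zero) extension of the datum stays in $H^{1/2}$ of the reflected line precisely when the original datum lies in $H^{1/2}_{00}(\GNi)$, which is the content of the identification $H^{1/2}_{00}(\GNi)=[L^2(\GNi),H^1_0(\GNi)]_{1/2}$ recalled above; the analogous interpolation in the time variable propagates the $H^{1/4}(I;L^2)$ regularity through the reflection. The right-angle hypothesis is essential here: the reflection across $\Gamma_D$ is then an isometry of the local model fixing the normal direction of $\GNi$, so the reflected data is again a normal-derivative datum of the same regularity.

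Finally I would assemble $\EE_\gg$ by summing the local extensions against the partition of unity and restricting to $\Om$. The full Dirichlet trace vanishes on $\partial\Om$ by construction (the $x_2$ factor on the flat parts, the odd reflection on the Dirichlet edges), so $\EE_\gg\in L^2(I;H^1_0(\Om)^2)$; the half-space estimates give the $L^2(I;H^2)\cap H^1(I;L^2)$ bound, with cutoff commutators and the flattening of the curved $C^2$ pieces of $\Gamma_D$ contributing only lower-order terms that are absorbed into the right-hand side. I expect the main obstacle to be exactly the corner analysis: securing simultaneously the homogeneous Dirichlet trace and the prescribed Neumann datum at the right-angle junction without losing $H^2$-in-space regularity, which succeeds only because of the $H^{1/2}_{00}$ compatibility built into the data space together with the $\tfrac{\pi}{2}$-angle condition of \Cref{ass:domain}.
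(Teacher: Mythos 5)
Your construction is correct, but it takes a genuinely different route from the paper's. The paper handles the corner abstractly: it treats the variable $x$ normal to $\GNi$ as the trace variable in the Lions--Magenes trace theorem with values in the Banach space $X = H^1_t(I;L^2_y)\cap L^2_t(I;H^2_y\cap H^1_{0,y})$, which already encodes the homogeneous Dirichlet condition on the adjacent edge; the data space $H^{\frac14}(I;L^2)\cap L^2(I;H^{\frac12}_{00})$ then appears as the interpolation space $[X,Y]_{\frac34}$ via Seeley's identity $[H^2\cap H^1_0,L^2]_{\frac34}=H^{\frac12}_{00}$, and the surjectivity of $E\mapsto\bigl(E|_{x=0},\partial_x E|_{x=0}\bigr)$ delivers the vanishing Dirichlet trace and the prescribed Neumann datum in one stroke, followed by the intermediate derivative theorem to reorder the regularities. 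You instead build the lifting explicitly as the Fourier multiplier $-\hat h\,x_2e^{-x_2\rho}$ with the parabolic weight $\rho=(1+|\xi|^2+|\tau|)^{\frac12}$, and you deal with the Dirichlet edge by odd reflection, so that $H^{\frac12}_{00}$ enters through its zero/odd-extension characterization rather than through Seeley's theorem; both arguments pivot on the same corner compatibility, just packaged differently. Your version is more elementary and makes the trace scaling $L^2(H^{\frac12})\cap H^{\frac14}(L^2)\leftrightarrow\int\rho\,|\hat h|^2$ completely transparent (your $x_2$-integrals check out, and the choice of \emph{odd} rather than even reflection is forced, exactly as you note, because oddness of $E$ in $x_1$ is what kills the trace on $\Gamma_D$ and in turn forces the datum itself to be extended oddly). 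Its price is the bookkeeping you already acknowledge --- extending the data from $I$ to $\R$ in time before Fourier transforming (harmless since $\frac14<\frac12$ needs no endpoint compatibility), the partition-of-unity commutators, and the flattening of the curved $C^2$ Dirichlet pieces --- none of which is a gap, and all of which the paper's proof leaves equally implicit by working directly in the exact sector $\R^+\times\R^+$.
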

\revA{Before we turn to proving the theorem, note that its result states $\EE_\gg \in H^1_0(\Om)^2$ for 
  a.a. $t \in I$, i.e. it allows us to specify homogeneous Dirichlet data and inhomogeneous Neumann 
data simultaneously on all $\GNi$.}
\begin{proof}
Away from the corners of $\Omega$ such an extension result is standard, \revA{and can be found e.g. in 
\cite[Chapter 4, Theorem 2.3]{lions_nonhomogeneous_1972_vol2}.}
Close to a corner, 
we follow an argument of \cite{lions_nonhomogeneous_1972_vol1,lions_nonhomogeneous_1972_vol2}.
The following scalar considerations can be applied to the individual components of $\gg_i$.
Consider the sector $\Xi = \R^+ \times \R^+ \revB{\subset \R^2}$. 
\revB{We wish to specify only homogeneous Dirichlet boundary data on $\{(x,y) \in \partial \Xi: y =0\}$
   whereas we want to set combined inhomogeneous Neumann and homogeneous Dirichlet data on 
   $\{(x,y) \in \partial\Xi: x=0\}$.}
For the sake of readability, in the remainder of the proof we index all Lebesque and Sobolev spaces 
$L^2_\xi, H^k_\xi$ by the variables $\xi \in \{t,x,y\}$ that they are associated with.
From \cite[Theorem  4.1]{seeley_interpolation_1972}, we obtain
$[H^2_y(\R^+) \cap H^1_{0,y}(\R^+),L^2_y(\R^+)]_{\frac{3}{4}} = H^{\frac{1}{2}}_{00,y}(\R^+)$.
For related results on interpolation spaces with homogeneous boundary conditions, see also
\cite[Theorem 8.1]{grisvard_caracterisation_1967}, \cite[Section 4.3.3]{triebel_interpolation_1978}
and \cite{grubb_regularity_2016}.
We introduce the spaces
\begin{equation*}
   X := H^1_t\left(I;L^2_y(\R^+)\right)\cap L^2_t\left(I;H^2_y(\R^+)\cap H^1_{0,y}(\R^+)\right) \qquad
   Y := L^2_t\left(I;L^2_y(\R^+)\right).
\end{equation*}
Thus, \revA{due to \cite[Chapter 1, Equation (9.24)]{lions_nonhomogeneous_1972_vol1}}, we have 
$
\left[X,Y\right]_{\frac{3}{4}}
  = H^{\frac{1}{4}}_t(I;L^2_y(\R^+)) \cap L^2_t(I;H^{\frac{1}{2}}_{00,y}(\R^+))
  $.
By \cite[Ch. 1, Thm. 3.2]{lions_nonhomogeneous_1972_vol1}, for every 
$g \in \left[X,Y\right]_{\frac{3}{4}}$, there is a 
\begin{equation}\label{eq:reordered_space_neumann_extension}
  E_g \in L^2_x(\R^+;X) 
  \cap H^2_x(\R^+;Y),
\end{equation}
satisfying \revB{$\partial_x E_g|_{\{x=0\}} = g$ and $E_g|_{\{x=0\}}=0$.}
\revA{Due to the intermediate derivative theorem \eqref{eq:intermediate_derivative_theorem} there further holds
  \begin{equation*}
    E_g \in H^1_x(\R^+;[X,Y]_{\frac12}) \hookrightarrow H^1_x(\R^+;L^2_t(I;H^1_y(\R^+))).
  \end{equation*}
  Thus, by reordering the derivatives, we have just proven
}
\begin{equation*}
  E_g\in L^2_t\left(I;H^2(\Xi) \cap H^1_{0}(\Xi)\right) \cap H^1_t(I;L^2(\Xi)),
\end{equation*}
which shows the theorem.
\end{proof}
Note that the above result provides an extension of the mixed Dirichlet-Neumann data, but in general 
$\nabla \cdot \EE_\gg \neq 0$. Nevertheless, using the result of the stationary Stokes problem with 
nonzero divergence, this extension result allows us to solve the instationary Stokes problem 
with inhomogeneous mixed boundary data.
\begin{theorem}\label{thm:stokes_inhom_neumann_data}
  Let $\ff \in L^2(I;L^2(\Om)^2)$ and 
  $\gg_i \in H^{\frac{1}{4}}(I;L^2(\GNi)^2) \cap L^2(I;H^{\frac{1}{2}}_{00}(\GNi)^2)$
  for $i = 1,...,L$.
  Then there exists a \revD{unique} weak solution 
  $(\ww,p_\ww) \in \W \times L^2(I;H^1(\Om))$ of
  \begin{equation}\label{eq:thm_Neumann_H2_weak_form}
    \Oprod{\partial_t \ww,\vv} + \Oprod{\nabla \ww,\nabla \vv} - \Oprod{p_\ww, \nabla \cdot \vv} 
    + \Oprod{\nabla \cdot \ww,p_\vv}
    = \Oprod{\ff,\vv} + \sum_{i=1}^L \Gprodj{\gg_i,\vv}{N,i}
  \end{equation}
  for all \revD{$(\vv,p_\vv) \in \widehat{\V}\times L^2(\Om)$, a.e. $t \in I$, and $\ww(0) = \oo$,}
satisfying
\begin{equation*}
  \|\partial_t \ww\|_{L^2(I;L^2(\Om))} + \|\ww\|_{L^2(I;H^2(\Om))}
  \revD{+ \|p_{\ww}\|_{L^2(I;H^1(\Om))}}
  \le C \left(\|\ff\|_{L^2(I;L^2(\Om))} +
  \sum_{i=1}^L \|\gg_i\|_{H^{\frac{1}{4}}(I;L^2(\GNi)) \cap L^2(I;H^{\frac{1}{2}}_{00}(\GNi))}\right).
\end{equation*}
\end{theorem}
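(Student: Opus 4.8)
The plan is to reduce the weak problem \eqref{eq:thm_Neumann_H2_weak_form} to the homogeneous-boundary, divergence-free instationary Stokes problem covered by \Cref{thm:max_par_reg}, by homogenizing first the Neumann data and then the divergence constraint. First I would invoke \Cref{thm:instat_Neumann_extension} to produce $\EE_\gg \in L^2(I;H^2(\Om)^2\cap H^1_0(\Om)^2)\cap H^1(I;L^2(\Om)^2)$ with $\partial_\nn\EE_\gg|_{\GNi}=\gg_i$ and $\EE_\gg=\oo$ on all of $\partial\Om$. Setting $\ww^0:=\ww-\EE_\gg$ removes the inhomogeneous boundary term: since the natural condition encoded in \eqref{eq:thm_Neumann_H2_weak_form} is $\partial_\nn\ww-p_\ww\nn=\gg_i$ on $\GNi$, the difference satisfies the homogeneous natural condition $\partial_\nn\ww^0-p_\ww\nn=\oo$ there, vanishes on $\Gamma_D$, and solves a Stokes system with right-hand side $\ff-\partial_t\EE_\gg+\Delta\EE_\gg\in L^2(I;L^2(\Om)^2)$, now carrying the nonzero divergence $\nabla\cdot\ww^0=-\nabla\cdot\EE_\gg$ and initial value $\ww^0(0)=-\EE_\gg(0)$.

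The second step handles this divergence with the operator $R$ from \eqref{eq:definition_operator_T}. Because $R$ is bounded on $L^2(\Om)^2$ and acts only in space, it commutes with $\partial_t$; applying it pointwise in time to $\EE_\gg$ and using the regularity bounds \eqref{eq:regularity_operator_T} yields $\boldsymbol{\phi}:=R(\EE_\gg)\in L^2(I;H^2(\Om)^2)\cap H^1(I;L^2(\Om)^2)$ with $\partial_t\boldsymbol{\phi}=R(\partial_t\EE_\gg)$, $\nabla\cdot\boldsymbol{\phi}=\nabla\cdot\EE_\gg$, vanishing on $\Gamma_D$ and solving the stationary system \eqref{eq:stat_stokes_inhom_divergence} with $\ff=\oo$. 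Then $\ww^{00}:=\ww^0+\boldsymbol{\phi}$ is divergence-free, satisfies the homogeneous Dirichlet and natural conditions, and — using $-\Delta\boldsymbol{\phi}+\nabla p_{\boldsymbol{\phi}}=\oo$ — solves the homogeneous-boundary Stokes equation with right-hand side $\ff-\partial_t\EE_\gg+\Delta\EE_\gg+\partial_t\boldsymbol{\phi}\in L^2(I;L^2(\Om)^2)$ and initial value $\ww^{00}(0)=\boldsymbol{\phi}(0)-\EE_\gg(0)$. The embedding $L^2(I;H^2(\Om)^2)\cap H^1(I;L^2(\Om)^2)\hookrightarrow C(\bar I;H^1(\Om)^2)$ together with the mapping properties of $R$ places this initial value in $\V=\V_s$ (for $s=2$), so \Cref{thm:max_par_reg} gives $\ww^{00}\in L^2(I;H^2(\Om)^2)\cap H^1(I;L^2(\Om)^2)$. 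Reassembling $\ww=\ww^{00}+\EE_\gg-\boldsymbol{\phi}$ puts $\ww\in\W$, and the initial values cancel so that $\ww(0)=\oo$ as required.

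For the pressure and the final estimate I would combine the pressure $\tilde p$ of the homogeneous problem (from the bootstrapping of \Cref{prop:instat_stokes_solvability} and \Cref{thm:mixed_Stokes_h2}) with the stationary pressure $p_{\boldsymbol{\phi}}$ furnished by \Cref{lemm:stat_stokes_inhom_divergence}; the identity $\Delta\boldsymbol{\phi}=\nabla p_{\boldsymbol{\phi}}$ then shows $p_\ww=\tilde p-p_{\boldsymbol{\phi}}\in L^2(I;H^1(\Om))$. The asserted bound follows by tracking norms through each step: the extension estimate of \Cref{thm:instat_Neumann_extension}, the $L^2$/$H^1$/$H^2$ bounds for $R$, the time-trace bound $\|\ww^{00}(0)\|_{\V_s}\le C\sum_i\|\gg_i\|$, and the maximal-regularity estimate. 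Uniqueness is obtained by testing the homogeneous problem ($\ff=\oo$, $\gg_i=\oo$, $\ww(0)=\oo$) with its own velocity and applying Gronwall as in \Cref{thm:uniqueness_stateeq}, the pressure being determined uniquely in all of $L^2(I;H^1(\Om))$ thanks to the do-nothing condition.

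The main obstacle is controlling the spatial $L^2(I;H^2)$ and temporal $H^1(I;L^2)$ regularity of the divergence lift \emph{simultaneously}: a naive right inverse of the divergence would not control $\partial_t$, which is precisely why the $L^2$-bounded operator $R$, commuting with $\partial_t$ and satisfying \eqref{eq:regularity_operator_T}, is the right device. A secondary point, worth a remark, is that the $H^{\frac14}$-in-time regularity of $\gg_i$ lies below the threshold for a time trace at $t=0$, so no compatibility between the boundary data $\gg_i$ and the vanishing initial datum is required.
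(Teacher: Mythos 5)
Your proposal follows essentially the same route as the paper's proof: lift the Neumann data with the extension $\EE_\gg$ from \Cref{thm:instat_Neumann_extension}, repair the resulting nonzero divergence with the time-independent operator $R$ from \eqref{eq:definition_operator_T} (so that $\partial_t$ commutes with the lift), and reduce to the homogeneous divergence-free instationary Stokes problem, reading off the pressure from the stationary and homogeneous pieces. The only cosmetic differences are that the paper invokes \Cref{prop:instat_stokes_solvability} rather than \Cref{thm:max_par_reg} with $s=2$ for the final homogeneous solve and works with weak-form subtractions rather than the strong equations; your argument is correct.
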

\begin{proof}
  \revD{Unique solvability for this problem was already shown in 
    \Cref{prop:instat_stokes_solvability}. For low regularity of the data, existence
    of a distributional pressure can be shown with a similar argument as in the proof of
    \Cref{prop:instat_stokes_solvability}. Hence we only need to show the improved 
  regularity.}
   By \Cref{thm:instat_Neumann_extension}, there exists 
   $\EE_\gg \in L^2\left(I;\revC{(H^2(\Om)\cap H^1_0(\Om))^2}\right) \cap H^1(I;L^2(\Om)^2)$, such that 
  $\partial_\nn \EE_\gg|_{\GNi} = \gg_i$ and there holds a bound
  \begin{equation*}
  \|\partial_t \EE_\gg\|_{L^2(I;L^2(\Om))} + \|\EE_\gg\|_{L^2(I;H^2(\Om))}
  \le C \left(
  \sum_{i=1}^L \|\gg_i\|_{H^{\frac{1}{4}}(I;L^2(\GNi)) \cap L^2(I;H^{\frac{1}{2}}_{00}(\GNi))}\right).
  \end{equation*}
  Furthermore, \revA{using integration by parts}, we obtain 
  \begin{equation}\label{eq:Neumann_extension_partial_integration}
    \Oprod{\nabla \EE_\gg,\nabla \vv} = \Oprod{- \Delta \EE_\gg,\vv} 
    + \sum_{i=1}^L \Gprodj{\partial_\nn \EE_\gg,\vv}{N,i}
    \quad \textforall \vv \in \revD{\widehat{\V}, \alev t \in I}.
  \end{equation}
  Since at this point, $\EE_\gg$ might not have zero divergence, 
  let us introduce for a.e. $t \in I$ the function $\ww_1(t) = R(\EE_\gg(t))$ with $R$ defined as in 
   \eqref{eq:definition_operator_T}.
   As $\EE_\gg \in L^2\left(I;\revC{(H^2(\Om)\cap H^1_0(\Om))^2}\right)$,
   due to \eqref{eq:regularity_operator_T} there holds
  \begin{equation*}
    \|\ww_1\|_{L^2(I;H^2(\Om))} \le C \|\EE_\gg \|_{L^2(I;H^2(\Om))},
  \end{equation*}
  and $\ww_1(t) \revC{\in H^2(\Om)^2 \cap \widehat{\V}}$,
  together with some $p_{\ww_1}(t) \revC{\in H^1(\Om)}$, satisfy for a.e. $t \in I$ the equation
  \begin{equation}\label{eq:semistationary_problem}
     \Oprod{\nabla \ww_1(t),\nabla \vv} + \Oprod{\nabla \cdot \ww_1(t),p_\vv} 
     - \Oprod{\nabla \cdot \vv,p_{\ww_1}(t)}
     = \Oprod{- \nabla \cdot \EE_\gg \revA{(t)},p_\vv}
    \quad \textforall (\vv,p_\vv) \in \widehat{\V}\times L^2(\Om).
  \end{equation}
  Thus, by construction, it holds $\nabla \cdot (\EE_\gg(t) + \ww_1(t)) = 0$ for a.e. $t \in I$.
  As $R$ is independent of $t$, there holds 
  \begin{equation*}
     \partial_t \ww_1 = \partial_t R(\EE_\gg) = R(\partial_t \EE_\gg),
  \end{equation*}
  and due to $\partial_t \EE_\gg \in L^2(I;L^2(\Om)^2)$ and \eqref{eq:definition_operator_T}, there holds 
  \begin{equation*}
    \|\partial_t \ww_1\|_{L^2(I;L^2(\Om))}
    = \|R(\partial_t \EE_\gg)\|_{L^2(I;L^2(\Om))}
    \le C \|\partial_t \EE_\gg\|_{L^2(I;L^2(\Om))}.
  \end{equation*}
  Moreover, again using $\EE_\gg \in L^2(I;H^2(\Om)^2)$ and \Cref{lemm:stat_stokes_inhom_divergence}, 
  there holds 
  \begin{equation*}
    \|p_{\ww_1}\|_{L^2(I;H^1(\Om))} \le C \|\EE_\gg\|_{L^2(I;H^2(\Om))}.
  \end{equation*}
  Setting $\ww_2 := \ww - \EE_\gg - \ww_1$ and $p_{\ww_2} := p_\ww - p_{\ww_1}$, we obtain after
  subtracting
  \eqref{eq:Neumann_extension_partial_integration} and \eqref{eq:semistationary_problem} 
  from \eqref{eq:thm_Neumann_H2_weak_form} \revD{for a.e. $t \in I$}:
  \begin{equation*}
    \Oprod{\partial_t \ww,\vv} + \Oprod{\nabla \ww_2,\nabla \vv} 
    + \Oprod{\nabla \cdot (\ww \!-\! \ww_1),p_\vv}
    - \Oprod{p_{\ww_2},\nabla \cdot \vv}
    = \Oprod{\ff,\vv} 
    + \Oprod{\nabla \cdot \EE_\gg,p_\vv}
    - \Oprod{- \Delta \EE_\gg,\vv}. 
  \end{equation*}
  Here, we have used $\partial_\nn \EE_\gg|_{\GNi} = \gg_i$.
  Subtracting $\Oprod{\nabla \cdot \EE_\gg,p_\vv}$ and $\Oprod{\partial_t (\EE_\gg + \ww_1),\vv}$ 
  on both sides yields
  \begin{equation}\label{eq:wtwo_PDE}
    \Oprod{\partial_t \ww_2,\vv} + \Oprod{\nabla \ww_2,\nabla \vv} + \Oprod{\nabla \cdot \ww_2,p_\vv}
    - \Oprod{p_{\ww_2},\nabla \cdot \vv}
    = \Oprod{\tilde \ff, \vv},
  \end{equation}
  \revD{for all $(\vv,p_\vv) \in \widehat{\V}\times L^2(\Om)$, a.e. $t \in I$,}
  where we have set $\tilde \ff = \ff - \partial_t (\EE_\gg + \ww_1) + \Delta \EE_\gg 
  \in L^2(I\times \Om)^2$.
  \revD{From timewise trace estimate from 
   \cite[Chapter 1, Theorem 3.1]{lions_nonhomogeneous_1972_vol1}, we obtain
  \begin{equation*}
     \|\EE_\gg(0)\|_{H^1(\Om)} 
     \le C \left(\|\EE_\gg\|_{L^2(I;H^2(\Om))} + \|\partial_t \EE_\gg\|_{L^2(I;L^2(\Om))}\right).
  \end{equation*}
  With this, \eqref{eq:regularity_operator_T} and since by construction it holds
  $\nabla \cdot (\EE_\gg(0)+\ww_1(0)) = 0$, we have
     \begin{equation}\label{eq:wtwo_initCond}
      \ww_2(0) = \ww(0) - \EE_\gg(0) - \ww_1(0) = -(\EE_\gg(0)+\ww_1(0)) \in \V,
   \end{equation}
   together with a bound
  \begin{equation*}
     \|\ww_2(0)\|_\V \le C \|\EE_\gg(0)\|_{H^1(\Om)} 
     \le C \left(\|\EE_\gg\|_{L^2(I;H^2(\Om))} + \|\partial_t \EE_\gg\|_{L^2(I;L^2(\Om))}\right).
  \end{equation*}
  Putting \eqref{eq:wtwo_PDE} and \eqref{eq:wtwo_initCond} together, $(\ww_2,p_{\ww_2})$ satisfy
  an instationary Stokes equation with homogeneous boundary data,
  which by \Cref{prop:instat_stokes_solvability} yields
  $\ww_2 \in L^2(I;H^2(\Om)^2\cap \V) \cap H^1(I;L^2(\Om)^2)$, $p_{\ww_2} \in L^2(I;H^1(\Om))$ and the bound
  \begin{equation*}
    \|\partial_t \ww_2\|_{L^2(I;L^2(\Om))}+ \|\ww_2\|_{L^2(I;H^2(\Om))} 
    + \|p_{\ww_2}\|_{L^2(I;H^1(\Om))}
    \le C \left(\|\tilde \ff \|_{L^2(I;L^2(\Om))} 
         + \|\EE_\gg\|_{L^2(I;H^2(\Om))} + \|\partial_t \EE_\gg\|_{L^2(I;L^2(\Om))}\right).
  \end{equation*}
  }
  Inserting the previous estimates of $\EE_\gg, \ww_1$, in order to estimate the right hand side $\tilde \ff$
  yields the result.
\end{proof}
With this regularity result for inhomogeneous mixed boundary conditions established, we can now turn
  towards discussing higher regularity of the adjoint state $\zbar$. 
  As we will not be able to directly deduce the full regularity in one step, let us first recall the following
  technical lemma, allowing us to characterize interpolation spaces, between spaces that will be of interest 
  later.
   \begin{lemma}\label{lemm:boundary_condition_interpolation_space}
      For any $\theta \in [\frac12,1)$ \revB{and $i=1,...,L$}, the following
      \revC{interpolation result holds}
      \begin{equation*}
         \left[L^2(I;\Hhalfoodual),
            H^{\frac{1}{4}}(I;L^2(\GNi)) \cap L^2(I;H^{\frac{1}{2}}_{00}(\GNi))\right]_\theta
         =H^{\frac{\theta}{4}}(I;H^{\frac{\theta}{2} -\frac{1}{2}}(\GNi)) \cap
            L^2(I;H^{\theta-\frac{1}{2}}(\GNi)).
      \end{equation*}
   \end{lemma}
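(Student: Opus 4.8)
The plan is to pass to a single self-adjoint spatial scale, reduce the asserted identity to interpolating the two constituents of the intersection separately, and then confront the genuine difficulty, namely that interpolation must be shown to commute with this intersection. Let me denote by $\{\mathcal B^s\}_{s\in\R}$ the scale generated by the Dirichlet-type operator on the line segment $\GNi$, i.e. $\mathcal B^s := [L^2(\GNi),H^1_0(\GNi)]_s$ for $s\in[0,1]$ and $\mathcal B^{-s} := (\mathcal B^s)^*$ for $s>0$. Then $\mathcal B^{1/2}=H^{\frac12}_{00}(\GNi)$ by the identity recalled before \Cref{thm:instat_Neumann_extension}, $\mathcal B^{-1/2}=\Hhalfoodual$, and, since boundary conditions are invisible below order $\tfrac12$, $\mathcal B^s = H^s(\GNi)$ for all $s\in(-\tfrac12,\tfrac12)$. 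In this notation the two endpoint spaces read
\begin{equation*}
   X_0 := L^2(I;\Hhalfoodual) = L^2(I;\mathcal B^{-1/2}), \qquad
   X_1 := H^{\frac14}(I;\mathcal B^{0}) \cap L^2(I;\mathcal B^{1/2}),
\end{equation*}
and the claimed limit is $H^{\frac\theta4}(I;\mathcal B^{\frac\theta2-\frac12}) \cap L^2(I;\mathcal B^{\theta-\frac12})$, where for $\theta\in[\tfrac12,1)$ the exponents satisfy $\tfrac\theta2-\tfrac12\in[-\tfrac14,0)$ and $\theta-\tfrac12\in[0,\tfrac12)$, so both spatial orders lie in $(-\tfrac12,\tfrac12)$ and may be rewritten as the $H^{\cdot}(\GNi)$ spaces appearing in the statement.

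First I would dispatch the purely spatial identities. Since $\{\mathcal B^s\}$ is the scale of fractional powers of a positive self-adjoint operator, complex interpolation is exact along it: $[\mathcal B^a,\mathcal B^b]_\theta=\mathcal B^{(1-\theta)a+\theta b}$ follows for $a,b\in[0,1]$ from reiteration \eqref{eq:reiteration_theorem}, and is extended to the endpoint $a=-\tfrac12$ used here by the duality $([X,Y]_\theta)^*=[X^*,Y^*]_\theta$. This yields $[\mathcal B^{-1/2},\mathcal B^{0}]_\theta=\mathcal B^{\theta/2-1/2}$ and $[\mathcal B^{-1/2},\mathcal B^{1/2}]_\theta=\mathcal B^{\theta-1/2}$. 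Feeding these into the Bochner interpolation formula $[H^{s_1}(I;X),H^{s_2}(I;Y)]_\theta=H^{(1-\theta)s_1+\theta s_2}(I;[X,Y]_\theta)$ recalled in \Cref{sect:preliminary} (with $(s_1,s_2)=(0,\tfrac14)$ for the first constituent and $(0,0)$ for the second) gives
\begin{equation*}
   [X_0, H^{\frac14}(I;\mathcal B^0)]_\theta = H^{\frac\theta4}(I;\mathcal B^{\frac\theta2-\frac12}), \qquad
   [X_0, L^2(I;\mathcal B^{1/2})]_\theta = L^2(I;\mathcal B^{\theta-\frac12}),
\end{equation*}
which are exactly the two factors of the asserted target.

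It therefore remains to justify the commutation
\begin{equation*}
   \bigl[X_0,\, A_1\cap B_1\bigr]_\theta
   = \bigl[X_0, A_1\bigr]_\theta \cap \bigl[X_0, B_1\bigr]_\theta,
   \qquad A_1:=H^{\frac14}(I;\mathcal B^0),\ B_1:=L^2(I;\mathcal B^{1/2}),
\end{equation*}
and this is the main obstacle. The inclusion ``$\subseteq$'' is automatic by functoriality of the interpolation functor applied to $A_1\cap B_1\hookrightarrow A_1$ and $A_1\cap B_1\hookrightarrow B_1$. The reverse inclusion is the substantial part, and a naive diagonal coretraction $u\mapsto(u,u)$ into the product couple fails, since no bounded retraction $A_1\times B_1\to A_1\cap B_1$ exists (a coordinate projection lands in only one factor). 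I would instead exploit the concrete structure and realize the whole family $Z_\tau:=H^{\frac\tau4}(I;\mathcal B^{\frac\tau2-\frac12})\cap L^2(I;\mathcal B^{\tau-\frac12})$ as the domains of fractional powers of a sum $\mathcal A+\mathcal T$ of a spatial and a temporal positive operator that commute; the mixed-derivative (operator-sum) theorem identifies $D(\mathcal A)\cap D(\mathcal T)$ with $D(\mathcal A+\mathcal T)$, and interpolation of the domains of powers then exhibits $\{Z_\tau\}$ as an exact interpolation scale with $Z_0=X_0$ and $Z_1=X_1$.

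This is the same mechanism underlying the anisotropic identity \eqref{eq:space_time_intersection_interpolation}; the one new feature is the negative spatial endpoint in $X_0$, which \eqref{eq:space_time_intersection_interpolation} does not cover and which forces the spatial index in the time-regularity constituent to move as well, so that the result cannot be obtained from \eqref{eq:space_time_intersection_interpolation} by a single spatial lifting. The restriction $\theta\ge\tfrac12$ enters here: it keeps the order $\theta-\tfrac12$ of the $L^2$-constituent nonnegative, so that $L^2(I;\mathcal B^{\theta-1/2})\subset L^2(I\times\GNi)$ and the scale $Z_\tau$ remains inside honest function spaces. Evaluating the scale identity at $\tau_1=0$, $\tau_2=1$ and parameter $\theta$ gives $[X_0,X_1]_\theta=Z_\theta$, which is the claim after translating back from the $\mathcal B$-scale.
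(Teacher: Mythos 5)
Your proposal is correct and, at bottom, runs on the same mechanism as the paper's proof: the spatial identities $[\Hhalfoodual,L^2(\GNi)]_{1-\theta}=H^{\theta/2-1/2}(\GNi)$ and $[\Hhalfoodual,H^{1/2}_{00}(\GNi)]_\theta=H^{\theta-1/2}(\GNi)$ are obtained in the paper by duality and repeated use of the reiteration theorem \eqref{eq:reiteration_theorem} exactly as you indicate (with $\theta\ge\tfrac12$ entering at the same reiteration step you identify), and the commutation of interpolation with the intersection is the crux in both arguments. The only real difference is that the paper simply cites \cite[Chapter 1, Theorem 13.1]{lions_nonhomogeneous_1972_vol1} — the interpolation theorem for intersections of domains of commuting positive self-adjoint operators — to pass directly to $H^{\theta/4}(I;[\Hhalfoodual,L^2(\GNi)]_{1-\theta})\cap L^2(I;[\Hhalfoodual,H^{1/2}_{00}(\GNi)]_\theta)$, whereas you propose to re-derive that fact via the operator-sum machinery; this is precisely the content of the cited theorem, so nothing is gained or lost. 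One small caution on your sketch: the time-regularity constituent $H^{\tau/4}(I;\mathcal B^{\tau/2-1/2})$ is the domain of $(\mathcal T\mathcal A^{1/2})^\tau$ for a \emph{product} of the commuting temporal and spatial operators, not of $\mathcal T^\tau$ alone, so the family is not the fractional-power scale of a sum $\mathcal A+\mathcal T$ of a purely spatial and a purely temporal operator; you clearly noticed that the spatial index must move with $\tau$, but the operator you interpolate against has to reflect this, which is exactly the generality that Theorem 13.1 (two commuting operators $\Lambda_1=\mathcal T\mathcal A^{1/2}$, $\Lambda_2=\mathcal A$ over the pivot $L^2(I;\Hhalfoodual)$) provides.
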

   \begin{proof}
      From \cite[Chapter 1, Proposition 2.1]{lions_nonhomogeneous_1972_vol1} we obtain that 
      $L^2(\GNi) = \left[\Hhalfoodual,H^{\frac{1}{2}}_{00}(\GNi)\right]_{\frac{1}{2}}$.
      As taking derivatives with respect to $t$ and $x$ is commutative,
      by \cite[Chapter 1, Theorem 13.1]{lions_nonhomogeneous_1972_vol1}, we obtain
     \begin{align*}
        \left[L^2(I;\Hhalfoodual),H^{\frac{1}{4}}(I;L^2(\GNi)) 
       \cap L^2(I;H^{\frac{1}{2}}_{00}(\GNi))\right]_\theta \hspace{5cm}\\
       = H^{\frac{\theta}{4}}\left(I;[\Hhalfoodual,L^2(\GNi)]_{1-\theta}\right) 
       \cap L^2\left(I;[\revB{\Hhalfoodual},H^{\frac{1}{2}}_{00}(\GNi)]_\theta\right).
     \end{align*}
     As a result of \cite[Chapter 1, Theorem 12.3]{lions_nonhomogeneous_1972_vol1} we have,
     \revD{due to $H^{-1}(\GNi) = (H^1_0(\GNi))^*$, the identity}
     $\Hhalfoodual = [H^{-1}(\GNi),L^2(\GNi)]_{\frac{1}{2}}$. 
     In combination with the reiteration theorem \eqref{eq:reiteration_theorem} this gives
     \begin{equation*}
      \left[\Hhalfoodual,L^2(\GNi)\right]_{1-\theta} 
      = H^{\frac{\theta}{2} - \frac{1}{2}}(\GNi).
     \end{equation*}
     Similarly using several times the reiteration theorem \eqref{eq:reiteration_theorem}
     we obtain for $\theta \ge \frac{1}{2}$
     \begin{align*}
        \left[\Hhalfoodual,H^{\frac{1}{2}}_{00}(\GNi)\right]_\theta
       & = \left[[H^{-1}(\GNi),L^2(\GNi)]_{\frac{1}{2}},[H^1_0(\GNi),L^2(\GNi)]_{\frac{1}{2}}\right]_\theta\\
       & = \left[\left[H^{-1}(\GNi),\left[H^{-1}(\GNi),H^1_0(\GNi)\right]_{\frac{1}{2}}\right]_{\frac{1}{2}},
       \left[H^1_0(\GNi),\left[H^{-1}(\GNi),H^1_0(\GNi)\right]_{\frac{1}{2}}\right]_{\frac{1}{2}}\right]_\theta \\
       & = \left[\left[H^{-1}(\GNi),H^1_0(\GNi)\right]_{\frac{1}{4}},
          \left[H^{-1}(\GNi),H^1_0(\GNi)\right]_{\frac{3}{4}}\right]_\theta\\
       & = \left[H^{-1}(\GNi),H^1_0(\GNi)\right]_{(1-\theta) \frac{1}{4} + \theta \frac{3}{4}}\\
       & = \left[H^{-1}(\GNi),H^1_0(\GNi)\right]_{\frac{\theta}{2} + \frac{1}{4}}\\
       & = \left[\left[H^{-1}(\GNi),H^1_0(\GNi)\right]_{\frac{1}{2}},H^1_0(\GNi)\right]_{\theta - \frac{1}{2}}\\
       & = \left[L^2(\GNi),H^1_0(\GNi)\right]_{\theta - \frac12} 
       =H^{\theta - \frac{1}{2}}(\GNi),
     \end{align*}
     where in the last step, we have used the characterization of the interpolation space between $L^2$ and 
     $H^1_0$ from \cite[Chapter 1, Theorem 12.6]{lions_nonhomogeneous_1972_vol1}.
     The condition $\theta \in [\frac12,1)$ was needed in the step to the second to last line.
%
   \end{proof}
   The next two technical lemmas gives us regularity results for multiplication of two functions in some 
   Bochner spaces, as well as instationary trace results.
  \begin{lemma}\label{lemm:space_time_multiplication}
    Let $\omega \subset \R$ be open and let $p_1,p_2 \in [1,\infty]$ satisfy $1/p_1 + 1/p_2=1/2$ and let 
    $s_1,s_2,s \in [0,1]$ satisfy $s \le \min\{s_1,s_2\}$ and $s_1 + s_2 > s + 1/2$.
    Then for any $u \in H^{s_1}(I;L^{p_1}(\omega))$ and $w \in H^{s_2}(I;L^{p_2}(\omega))$, there holds 
    $u\cdot w \in H^s(I;L^2(\omega))$ with a bound
    \begin{equation*}
      \|u\cdot w\|_{H^s(I;L^2(\omega))} 
      \le C \|u\|_{H^{s_1}(I;L^{p_1}(\omega))}  \|w\|_{H^{s_2}(I;L^{p_2}(\omega))}.
    \end{equation*}
    Moreover, for any
    $u \in L^{p_1}(I;H^{s_1}(\omega))$ and $w \in L^{p_2}(I;H^{s_2}(\omega))$, there holds
    $u\cdot w \in L^2(I;H^s(\omega))$ with a bound
    \begin{equation*}
      \|u\cdot w\|_{L^2(I;H^s(\omega))} 
      \le C \|u\|_{L^{p_1}(I;H^{s_1}(\omega))}  \|w\|_{L^{p_2}(I;H^{s_2}(\omega))}.
    \end{equation*}
  \end{lemma}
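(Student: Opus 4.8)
The plan is to treat both estimates by isolating, in each case, the single variable that carries fractional Sobolev smoothness and the single variable that is handled by Hölder's inequality coming from the balance $1/p_1+1/p_2=\tfrac12$. The only genuinely nontrivial analytic input is the one-dimensional pointwise multiplication theorem for fractional Sobolev spaces: on an interval (or on $\R$ after extension) one has $H^{s_1}\cdot H^{s_2}\hookrightarrow H^{s}$ whenever $s\le\min\{s_1,s_2\}$ and $s_1+s_2-s>\tfrac12$. The two hypotheses of the lemma are exactly this requirement, since both $\omega$ and $I$ are one-dimensional, so that the critical index is $d/2=\tfrac12$.

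For the second estimate I would argue pointwise in time. For almost every $t\in I$ the scalar multiplication theorem, applied in the space variable, gives $\|u(t)w(t)\|_{H^s(\omega)}\le C\|u(t)\|_{H^{s_1}(\omega)}\|w(t)\|_{H^{s_2}(\omega)}$. Squaring, integrating over $I$, and applying Hölder's inequality in time with the exponents $p_1/2$ and $p_2/2$, which are conjugate precisely because $2/p_1+2/p_2=1$, yields $\|uw\|_{L^2(I;H^s(\omega))}\le C\|u\|_{L^{p_1}(I;H^{s_1}(\omega))}\|w\|_{L^{p_2}(I;H^{s_2}(\omega))}$, which is the claim.

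The first estimate is the transposed situation, where the fractional smoothness now sits in the outer (time) variable while space is handled by Hölder. I would start from the Gagliardo--Slobodeckij characterisation of $H^s(I;L^2(\omega))$ and, writing $u(t)w(t)-u(\tau)w(\tau)=(u(t)-u(\tau))w(t)+u(\tau)(w(t)-w(\tau))$, estimate each term in $L^2(\omega)$ by Hölder, as in $\|(u(t)-u(\tau))w(t)\|_{L^2(\omega)}\le\|u(t)-u(\tau)\|_{L^{p_1}(\omega)}\|w(t)\|_{L^{p_2}(\omega)}$ and symmetrically. This reduces the seminorm of $uw$ to two scalar double integrals of exactly the form arising in the one-dimensional fractional Leibniz (Kato--Ponce) rule, pairing the $H^{s_1}$-seminorm of the scalar function $t\mapsto\|u(t)\|_{L^{p_1}(\omega)}$ against the pointwise norm $t\mapsto\|w(t)\|_{L^{p_2}(\omega)}$, and vice versa; note that $t\mapsto\|u(t)\|_{L^{p_1}(\omega)}$ inherits the $H^{s_1}(I)$ regularity of $u$ since the norm is $1$-Lipschitz. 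The $L^2(I;L^2(\omega))$ part of the target norm, which is also the entire content of the degenerate case $s=0$, is immediate: one only needs $\int_I\|u\|_{L^{p_1}}^2\|w\|_{L^{p_2}}^2$, and this closes by the one-dimensional Sobolev embeddings $H^{s_i}(I)\hookrightarrow L^{q_i}(I)$ with $1/q_i=(\tfrac12-s_i)_+$ together with Hölder, the admissibility $1/q_1+1/q_2\le\tfrac12$ being ensured by $s_1+s_2\ge\tfrac12$.

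The main obstacle is the seminorm part when $s>0$ and $\min\{s_1,s_2\}\le\tfrac12$, where the two time variables in the double integral are genuinely coupled and a crude application of Hölder fails. Here I would use $s\le s_1$ to split the kernel as $|t-\tau|^{-(1+2s)}=|t-\tau|^{-(1+2s_1)}|t-\tau|^{2(s_1-s)}$ and control the resulting weighted square function by a Hardy--Littlewood--Sobolev-type estimate, whose admissible range of exponents is dictated precisely by $s_1+s_2-s>\tfrac12$; the companion term is treated symmetrically using $s\le s_2$. When $\min\{s_1,s_2\}>\tfrac12$ this step degenerates to a one-line bound, since then the relevant pointwise norm lies in $L^\infty(I)$ and the weight $|t-\tau|^{2(s_1-s)}$ is bounded on the finite interval $I$. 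One may alternatively reach the full range by bilinear interpolation of these elementary endpoint estimates, which I would mention as a shortcut; I expect the Hardy--Littlewood--Sobolev step to be the only part requiring real care, everything else being bookkeeping with Hölder and the two multiplication theorems.
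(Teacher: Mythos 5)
Your proposal is correct, and its overall reduction is the same as the paper's: use H\"older's inequality with $1/p_1+1/p_2=1/2$ to decouple the integrability in one variable from the fractional smoothness in the other, and then invoke a one-dimensional multiplication estimate whose hypotheses are exactly $s\le\min\{s_1,s_2\}$ and $s_1+s_2-s>1/2$. The difference is in how that multiplication estimate is handled. The paper's proof is two lines: it passes to the scalar functions $t\mapsto\|u(t)\|_{L^{p_1}(\omega)}$ and $t\mapsto\|w(t)\|_{L^{p_2}(\omega)}$, notes they lie in $H^{s_i}(I)$, and cites the multiplication theorem of \cite{behzadan_multiplication_2021} as a black box (obtaining the spatial-smoothness variant by exchanging the roles of $I$ and $\omega$, just as you do pointwise in time). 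You instead reopen that theorem: the Gagliardo seminorm of $uw$, the splitting $u(t)w(t)-u(\tau)w(\tau)=(u(t)-u(\tau))w(t)+u(\tau)(w(t)-w(\tau))$, H\"older in space, and then the kernel splitting plus a Hardy--Littlewood--Sobolev-type bound. What your route buys is that it makes explicit a point the paper's reduction glosses over: the $H^s(I;L^2(\omega))$ seminorm of the product is \emph{not} literally dominated by the seminorm of the product of the two norm functions, so one really does need your product-rule decomposition to see that the double integrals arising are exactly those estimated in the scalar proof. What it costs is that the genuinely hard step --- the weighted square-function/HLS estimate in the regime $s>0$, $\min\{s_1,s_2\}\le 1/2$, where the strict inequality $s_1+s_2-s>1/2$ is consumed --- is only sketched in your write-up; that step is precisely the content of the cited theorem, so if you do not want to carry it out in full you should cite it there rather than gesture at it. Your treatment of the easy sub-cases (the $L^2$ part via $H^{s_i}(I)\hookrightarrow L^{q_i}(I)$ with $1/q_1+1/q_2\le 1/2$, and the $\min\{s_1,s_2\}>1/2$ case via $L^\infty$ and boundedness of the weight on the finite interval) is correct, as is the observation that the norm maps are $1$-Lipschitz so that $t\mapsto\|u(t)\|_{L^{p_1}(\omega)}$ inherits the $H^{s_1}(I)$ regularity.
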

  \begin{proof}
    We only show one of the claims, the other follows by switching the roles of $I$ and $\omega$.
    By Hölder's inequality and the assumptions on $p_1,p_2$ there holds 
    \begin{equation*}
      \|u\cdot v\|_{L^2(\omega)} \le \|u\|_{L^{p_1}(\omega)}\|v\|_{L^{p_2}(\omega)}.
    \end{equation*}
    By assumptions on the regularities of $u,v$, the norms on the right hand sides are functions in 
    $H^{s_i}(I)$, $i=1,2$.
    The result then is a direct consequence of the multiplication theorem in Sobolev spaces 
    \cite[Theorem A.1]{behzadan_multiplication_2021}.
    Note that the second result can be obtained with the same argument and the same conditions on $s_i$, 
    as $\omega \subset \R$ was assumed to be one-dimensional.
  \end{proof}
  \begin{lemma}\label{lemm:space_time_trace}
 The spacial trace operator $\tr(\cdot)$ is a well defined, continuous, linear operator between 
 the following spaces
\begin{align*}
  \VV
  & \to H^{\gamma}(I;H^{\frac12-2\gamma}(\dO)^2),
  &&\text{if } \gamma\in \left[0,\frac14\right),\\
  \W
  & \to H^{\gamma}(I;H^{\frac32-2\gamma}(\dO)^2),
  && \text{if } \gamma \in \left(\frac{1}{4},\frac34\right),\\
  L^2(I;H^{1+\theta}(\Om)^2) \cap H^{\theta}(I;H^{1-\theta}(\Om)^2) & \to
  H^{\gamma}(I;H^{\frac12+\theta-2\gamma}(\dO)^2),
  &&\text{if } 
  \gamma \in [0,\theta) \cap \left(\frac{\theta}{2}-\frac{1}{4},\frac{\theta}{2}+\frac{1}{4}\right),
\end{align*}
where in the last statement, $\theta$ is some fixed parameter, satisfying $\theta \in (0,1)$.
  \end{lemma}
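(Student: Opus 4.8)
The plan is to prove all three mapping properties by a single three-step scheme: first reorganize the available regularity of $u$ into one scale of the form $H^\gamma(I;H^{s}(\Om)^2)$ via the intermediate derivative theorem \eqref{eq:intermediate_derivative_theorem}, then apply the stationary spatial trace \eqref{eq:trace_general} with $p=2$, and finally lift this purely spatial operator to the Bochner setting. The reorganization is most transparent when expressed through the fractional powers of the Stokes operator $A$ from \eqref{eq:def_Stokes_operator}. Since the form $(\nabla\cdot,\nabla\cdot)$ is symmetric and, thanks to $\Gamma_D\neq\emptyset$ and Poincar\'e, coercive on $\V$, the operator $A$ is positive and self-adjoint on $\Hspace$, with $\mathcal D(A^{1/2})=\V$, $\mathcal D(A^0)=\Hspace$, $\mathcal D(A^{-1/2})=\V^*$, and, by \Cref{thm:mixed_Stokes_h2}, $\mathcal D(A)=\V\cap H^2(\Om)^2$. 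For such an operator the complex interpolation of its domains satisfies $[\mathcal D(A^a),\mathcal D(A^b)]_\lambda=\mathcal D(A^{(1-\lambda)a+\lambda b})$, and interpolating the embeddings $\Hspace\hookrightarrow L^2(\Om)^2$ and $\mathcal D(A)\hookrightarrow H^2(\Om)^2$ yields $\mathcal D(A^\mu)\hookrightarrow H^{2\mu}(\Om)^2$ for $\mu\in[0,1]$.

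With these tools the three cases are computed identically. For the third, most general statement I would apply \eqref{eq:intermediate_derivative_theorem} with $X=H^{1+\theta}(\Om)^2$, $Y=H^{1-\theta}(\Om)^2$ and $s=\theta$, giving $u\in H^{\theta\lambda}(I;[H^{1+\theta},H^{1-\theta}]_\lambda)=H^{\theta\lambda}(I;H^{1+\theta-2\theta\lambda}(\Om)^2)$ for $\lambda\in[0,1)$ (the endpoint $\lambda=0$ being the plain spatial trace in $L^2$-time). Setting $\gamma=\theta\lambda$, so that $\gamma\in[0,\theta)$, the spatial regularity is exactly $1+\theta-2\gamma$, and \eqref{eq:trace_general} maps $H^{1+\theta-2\gamma}(\Om)$ into $H^{1/2+\theta-2\gamma}(\dO)$ precisely when $\tfrac12<1+\theta-2\gamma<\tfrac32$, i.e. $\gamma\in(\tfrac\theta2-\tfrac14,\tfrac\theta2+\tfrac14)$; intersecting the two constraints on $\gamma$ reproduces the stated range. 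For $\VV$ I would instead take $X=\V=\mathcal D(A^{1/2})$, $Y=\V^*=\mathcal D(A^{-1/2})$, $s=1$, obtaining $u\in H^\lambda(I;\mathcal D(A^{1/2-\lambda}))\hookrightarrow H^\lambda(I;H^{1-2\lambda}(\Om)^2)$; the trace is admissible for $1-2\lambda>\tfrac12$, i.e. $\lambda\in[0,\tfrac14)$, and lands in $H^{1/2-2\lambda}(\dO)$. For $\W$ I would take $X=\mathcal D(A)=\V\cap H^2(\Om)^2$, $Y=\Hspace=\mathcal D(A^0)$, $s=1$, obtaining $u\in H^\lambda(I;\mathcal D(A^{1-\lambda}))\hookrightarrow H^\lambda(I;H^{2-2\lambda}(\Om)^2)$, whose trace lies in $H^{3/2-2\lambda}(\dO)$ and is admissible exactly for $2-2\lambda\in(\tfrac12,\tfrac32)$, i.e. $\lambda\in(\tfrac14,\tfrac34)$. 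In each case renaming $\lambda$ as $\gamma$ delivers the claimed exponents.

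It remains to pass from the spatial trace to the Bochner--Sobolev spaces. Since $\tr\colon H^{s}(\Om)^2\to H^{s-1/2}(\dO)^2$ is bounded and linear and commutes with differentiation in time, it induces a bounded operator $H^m(I;H^s(\Om)^2)\to H^m(I;H^{s-1/2}(\dO)^2)$ for integer $m$, hence, by complex interpolation in the time variable (the definition of $H^\gamma(I;X)$ recalled in \Cref{sect:preliminary}), a bounded operator $H^\gamma(I;H^s(\Om)^2)\to H^\gamma(I;H^{s-1/2}(\dO)^2)$ for every admissible $\gamma$. Chaining this with the interpolation bound furnished by \eqref{eq:intermediate_derivative_theorem} yields continuity of $\tr$ between the stated spaces, and linearity is immediate. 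I expect the only genuinely delicate point to be the identification carried out in the first paragraph, namely that the abstract spaces $[\V,\V^*]_\lambda$ and $[\mathcal D(A),\Hspace]_\lambda$ coincide with fractional-power domains and embed into the Slobodeckij spaces $H^{1-2\lambda}(\Om)^2$ and $H^{2-2\lambda}(\Om)^2$; this rests on $A$ being self-adjoint and positive with the sharp domain $\mathcal D(A)=\V\cap H^2(\Om)^2$ from \Cref{thm:mixed_Stokes_h2}, on the stability of complex interpolation under the Sobolev embeddings, and on the coincidence of Bessel-potential and Slobodeckij scales on our domain.
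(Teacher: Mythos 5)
Your proposal is correct and, at its core, follows the same route as the paper: reorganize the space--time regularity into a single scale $H^{\gamma}(I;H^{s}(\Om)^2)$ via the intermediate derivative theorem \eqref{eq:intermediate_derivative_theorem}, then apply the spatial trace theorem \eqref{eq:trace_general} on the window $\frac12<s<\frac32$; your exponent bookkeeping and the resulting admissible ranges for $\gamma$ coincide exactly with the paper's in all three cases. The only genuine difference is how the intermediate spaces are identified. The paper proceeds concretely: for $\VV$ it first uses $[\V,\V^*]_{\frac12}=\Hspace$ to get $H^{\frac12}(I;\Hspace)$ and then interpolates the pair $L^2(I;H^1(\Om)^2)\cap H^{\frac12}(I;L^2(\Om)^2)$, while for $\W$ and the third case it simply forgets the divergence and boundary constraints and interpolates the embeddings into $L^2(I;H^{k}(\Om)^2)\cap H^{\sigma}(I;L^2(\Om)^2)$. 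You instead phrase everything through fractional powers of the Stokes operator $A$. That works, but one identity you assert is false as stated: $\mathcal D(A)=\V\cap H^2(\Om)^2$ does not hold in general, since \Cref{thm:mixed_Stokes_h2} gives only the inclusion $\mathcal D(A)\subset \V\cap H^2(\Om)^2$, and a generic $\uu\in\V\cap H^2(\Om)^2$ fails the natural boundary condition on $\Gamma_N$ that membership in $\mathcal D(A)$ encodes. This matters because $\W$ is built from $\V\cap H^2(\Om)^2$, not from $\mathcal D(A)$, so the fractional-power identification $[\mathcal D(A),\Hspace]_\lambda=\mathcal D(A^{1-\lambda})$ is not directly applicable to $[\V\cap H^2(\Om)^2,\Hspace]_\lambda$. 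The fix is harmless and is what the paper implicitly does: you only need the one-sided embedding $[\V\cap H^2(\Om)^2,\Hspace]_\lambda\hookrightarrow[H^2(\Om)^2,L^2(\Om)^2]_\lambda=H^{2-2\lambda}(\Om)^2$, which follows from interpolating the embeddings of the endpoint spaces and requires no spectral theory at all. With that correction your argument is complete; the final lifting of the trace to Bochner--Sobolev spaces by interpolation in time is also consistent with how the paper uses \eqref{eq:trace_general} after establishing $u\in H^{\gamma}(I;H^{s}(\Om)^2)$.
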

  \begin{proof}
    Let first $\ww \in \VV = L^2(I;\V) \cap H^1(I;\V^*)$.
    Note that for $\gamma=0$, this is just a direct application
    of the trace theorem to $\ww \in L^2(I;\V)$. As $[\V,\V^*]_{\frac{1}{2}} = \Hspace$,
    the intermediate derivative theorem \eqref{eq:intermediate_derivative_theorem} gives
    $\ww \in H^{\frac{1}{2}}(I;\Hspace)$. Thus $\ww \in L^2(I;H^1(\Om)^2)\cap H^{\frac{1}{2}}(I;L^2(\Om)^2)$.
    Applying the intermediate derivative theorem \eqref{eq:intermediate_derivative_theorem} again, yields 
    $\ww \in H^{\frac{\beta}{2}}(I;H^{1-\beta}(\Om)^2)$ for any $\beta \in (0,1)$. 
    If we introduce $\gamma := \frac{\beta}{2} \in (0,\frac12)$, in case $1-2\gamma \in (\frac12,\frac32)$,
    the trace theorem \eqref{eq:trace_general} yields $\tr(\ww) \in H^{\gamma}(I;H^{1/2 - 2\gamma}(\dO)^2)$.
    Thus the result holds for all $\gamma \in [0,\frac12)\cap (-\frac14,\frac14) = [0,\frac14)$.\\
    Let now 
    $\ww \in \W = L^2(I;\V\cap H^2(\Om)^2) \cap H^1(I;\Hspace) \subset L^2(I;H^2(\Om)^2)\cap H^1(I;L^2(\Om)^2)$,
    the intermediate derivative theorem \eqref{eq:intermediate_derivative_theorem} 
    gives $\ww \in H^\gamma(I;H^{2-2\gamma}(\Om)^2)$ for any 
    $\gamma \in (0,1)$. Thus the trace theorem \eqref{eq:trace_general} gives 
    $\tr(\ww)\in H^\gamma(I;H^{\frac32-2\gamma}(\revB{\partial\Om})^2)$ whenever $\gamma \in (\frac14,\frac34)$.\\
    Lastly, let
    $\ww \in L^2(I;H^{1+\theta}(\Om)^2) \cap H^{\theta}(I;H^{1-\theta}(\Om))$,
    for some fixed $\theta \in (0,1)$.
    We can define $\sigma := \frac{\gamma}{\theta}$. It holds $\sigma \in (0,1)$ iff $\gamma \in (0,\theta)$.
    The intermediate derivative theorem \eqref{eq:intermediate_derivative_theorem} yields 
    $\ww \in H^{\sigma\theta}(I;H^{\sigma(1-\theta)+(1-\sigma)(1+\theta)}(\Om)^2)$
    Thus with the definition of $\sigma$, there holds
    $\ww \in H^\gamma(I;H^{1+\theta-2\gamma}(\Om)^2)$.
    Whenever $1+\theta-2\gamma \in (\frac12,\frac32)$, or equivalently
    $\gamma\in(\frac{\theta}{2}-\frac14,\frac{\theta}{2}+\frac14)$, \eqref{eq:trace_general} yields
    $\tr(\ww) \in H^\gamma(I;H^{\frac12+\theta-2\gamma}(\dO)^2)$.
    Combining this condition with $\gamma \in (0,\theta)$ concludes the proof.
  \end{proof}
Finally we are able to show improved regularity results for the optimal variables.
\begin{theorem}\label{thm:improved_regularity}
   Let $\uu_d \in L^6(\revB{I\times \Om})^2$ and let $(\bar q,\bar \uu,\bar \zz)$ satisfy the
   first order optimality conditions of \Cref{thm:adjoint_regularity},
   \revD{and let $ p_{\ubar}$ be the pressure associated with $\ubar$.}
   Then there holds 
   \begin{align*}
      \bar \uu & \in L^{\revD{r}}(I;\V \cap H^2(\Om)^2) \cap W^{1,{\revD{r}}}(I;L^2(\Om)^2) 
               & \text{for any } \revD{r \in [1,\infty)},\\
       p_{\bar \uu} & \in \revD{L^r(I;H^1(\Om))} & \text{for any } r \in [1,\infty),\\
      \bar \zz & \in \W, &\\
      \bar q & \in H^1(I)^L. &
   \end{align*}
\end{theorem}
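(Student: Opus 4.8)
The plan is to run a single bootstrapping pass through the optimality system, upgrading the variables in the order $\qbar$, then $\ubar$, then $p_{\ubar}$, then $\zbar$, and finally $\qbar$ again, starting from the regularity already recorded in \Cref{thm:adjoint_regularity} (in particular $\qbar\in C(\bar I)^L$, $\ubar\in\W$ and $\zbar\in\VV$). Each step upgrades one variable using the regularity of the previous one, and the only genuinely new analytic input is the inhomogeneous mixed-boundary Stokes theory of \Cref{thm:stokes_inhom_neumann_data}.

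First I would improve $\ubar$ by reading the state equation \eqref{eq:stateequation} as a linear Stokes system with source $\ff_{\ubar}:=-\sum_{i=1}^L\qbar_i\nabla\zeta_i-(\ubar\cdot\nabla)\ubar$ and applying maximal parabolic regularity, \Cref{thm:max_par_reg}. The control part is harmless: since $\qbar_i\in C(\bar I)$ and $\zeta_i\in C^\infty(\overline\Om)$, one has $\sum_i\qbar_i\nabla\zeta_i\in L^\infty(I;L^2(\Om)^2)$. The nonlinear part is bootstrapped in time: from $\ubar\in\W\hookrightarrow L^\infty(I;H^1(\Om)^2)$ and $\ubar\in L^{s}(I;H^2(\Om)^2)$ (with $s=2$ initially), interpolation in time gives $\nabla\ubar\in L^{2s}(I;H^{1/2}(\Om)^2)\hookrightarrow L^{2s}(I;L^4(\Om)^2)$, whence by Hölder $(\ubar\cdot\nabla)\ubar\in L^{s'}(I;L^2(\Om)^2)$ with $s'$ essentially doubling $s$. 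Feeding this back into \Cref{thm:max_par_reg} (with the initial datum in the trace space $\V_{s'}$ — the point where compatibility of $\uu_0$ enters, automatic e.g. for $\uu_0=\oo$) raises the exponent, and iterating reaches every finite $r$, giving $\ubar\in L^r(I;\V\cap H^2(\Om)^2)\cap W^{1,r}(I;L^2(\Om)^2)$. For $r$ large the trace embedding of the maximal-regularity space yields $\ubar\in C(\bar I;H^{2-\delta}(\Om)^2)$, hence $\ubar\in L^\infty(I\times\Om)^2\cap L^\infty(I;W^{1,4}(\Om)^2)$. The pressure claim is then immediate: recovering $\nabla p_{\ubar}=\ff_{\ubar}-\partial_t\ubar+\Delta\ubar$ pointwise in time and using the stationary estimate of \Cref{thm:mixed_Stokes_h2} for a.e.\ $t$, all summands now lie in $L^r(I;L^2(\Om)^2)$, so $p_{\ubar}\in L^r(I;H^1(\Om))$ for every finite $r$.

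The hard step is the regularity of the adjoint $\zbar$, and this is where \Cref{thm:stokes_inhom_neumann_data} is indispensable: in contrast to the state equation, whose boundary term was removed by \Cref{lemm:boundary_to_rhs}, the adjoint genuinely carries inhomogeneous Neumann data. Integrating by parts the transport term gives $\Oprod{(\ubar\cdot\nabla)\vv,\zbar}=-\Oprod{(\ubar\cdot\nabla)\zbar,\vv}+\sum_{i=1}^L\Gprodj{(\ubar\cdot\nn)\zbar,\vv}{N,i}$ (the $\Gamma_D$ contribution vanishes since $\ubar\cdot\nn=0$ there), so after reversing time $t\mapsto T-t$ the adjoint equation \eqref{eq:adjoint_L4} becomes a forward instationary Stokes problem with homogeneous initial data, volume source $\tilde\ff:=|\ubar-\uu_d|^2(\ubar-\uu_d)+(\ubar\cdot\nabla)\zbar-(\nabla\ubar)^\top\zbar$, and Neumann datum $\gg_i:=(\ubar\cdot\nn)\zbar|_{\GNi}$. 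One checks $\tilde\ff\in L^2(I;L^2(\Om)^2)$: the cubic term is controlled by $\uu_d\in L^6(I\times\Om)^2$ together with $\ubar\in L^6(I\times\Om)^2$, while $(\ubar\cdot\nabla)\zbar$ and $(\nabla\ubar)^\top\zbar$ lie in $L^2(I;L^2(\Om)^2)$ by combining $\ubar\in L^\infty(I\times\Om)^2\cap L^\infty(I;W^{1,4}(\Om)^2)$ with $\nabla\zbar\in L^2(I;L^2(\Om))$ and $\zbar\in L^2(I;L^4(\Om)^2)$ from $\zbar\in\VV$. The crux is that $\gg_i$ must lie in $H^{1/4}(I;L^2(\GNi)^2)\cap L^2(I;H^{1/2}_{00}(\GNi)^2)$, the exact datum class of \Cref{thm:stokes_inhom_neumann_data}. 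Here the bare $\VV$-regularity of $\zbar$ falls just short — by \Cref{lemm:space_time_trace} its trace only reaches $H^{\gamma}(I;H^{1/2-2\gamma}(\dO)^2)$ for $\gamma<\tfrac14$ — so I would first run an auxiliary bootstrap: apply \Cref{thm:stokes_inhom_neumann_data} in a lower-order form, using the interpolated datum spaces characterised in \Cref{lemm:boundary_condition_interpolation_space}, to lift $\zbar$ to an intermediate space $L^2(I;H^{1+\theta}(\Om)^2)\cap H^\theta(I;H^{1-\theta}(\Om)^2)$; its improved trace (third line of \Cref{lemm:space_time_trace}), combined with the high regularity of $\ubar$ and the product rule of \Cref{lemm:space_time_multiplication}, then places $\gg_i$ into the required class, and a final application of \Cref{thm:stokes_inhom_neumann_data} yields $\zbar\in\W$ (together with an associated pressure, which we discard).

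Finally, $\qbar\in H^1(I)^L$ follows from the projection formula \eqref{eq:qbar_projection_formula}: since $\zbar\in\W\hookrightarrow H^1(I;\Hspace)$ and $\nabla\zeta_i\in L^2(\Om)^2$ is time-independent, the map $t\mapsto\tfrac1\alpha\Oprod{\nabla\zeta_i,\zbar(t)}$ has time derivative $\tfrac1\alpha\Oprod{\nabla\zeta_i,\partial_t\zbar(t)}\in L^2(I)$ and hence belongs to $H^1(I)$; as the projection $P_{[q_{a,i},q_{b,i}]}$ is Lipschitz and composition with a Lipschitz map preserves $H^1$ in one dimension, $\qbar_i\in H^1(I)$. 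I expect the third step to be the main obstacle: it is precisely the point with which the struck-through attempts in the source grapple, and its resolution hinges on the delicate matching of the space-time fractional trace of $\zbar$ to the sharp datum class of \Cref{thm:stokes_inhom_neumann_data}, together with the intermediate bootstrap forced by the fact that the plain $\VV$-regularity is off only by an endpoint.
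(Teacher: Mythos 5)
Your proposal is correct and follows essentially the same route as the paper: maximal parabolic regularity for $\ubar$ after bounding $(\ubar\cdot\nabla)\ubar$ in $L^r(I;L^2(\Om)^2)$, recovery of $p_{\ubar}$ timewise from \Cref{thm:mixed_Stokes_h2}, rewriting the adjoint as an inhomogeneous mixed-boundary Stokes problem with datum $\gg_i=(\ubar\cdot\nn)\zbar|_{\GNi}$, an intermediate interpolated estimate (via \Cref{lemm:boundary_condition_interpolation_space}, \Cref{lemm:space_time_trace} and \Cref{lemm:space_time_multiplication}) to close the endpoint gap before the final application of \Cref{thm:stokes_inhom_neumann_data}, and the projection formula for $\qbar\in H^1(I)^L$. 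The only cosmetic differences are that you reach $(\ubar\cdot\nabla)\ubar\in L^r(I;L^2)$ by iterated exponent doubling where the paper uses a single $\varepsilon$-interpolation estimate, and you obtain $\ubar\in L^\infty(I\times\Om)^2$ from the trace space of the maximal-regularity class rather than by citing an external result.
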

\begin{proof}
   We first show the regularity of $\bar \uu$ with a bootstrapping argument. 
   \revC{For any $\varepsilon>0$,} it holds
   \begin{equation*}
      \|(\bar \uu \cdot \nabla )\bar \uu\|_{L^2(\Om)} 
      \le \|\bar \uu\|_{L^{\frac{4+2\varepsilon}{\varepsilon}}(\Om)}\|\nabla \bar \uu\|_{L^{(2+\varepsilon)}(\Om)}
      \le \|\nabla \bar \uu\|_{L^{2}(\Om)}\|\nabla \bar \uu\|_{L^{2}(\Om)}^{\frac{2}{2+\varepsilon}}
      \|\nabla \bar \uu\|_{H^1(\Om)}^{1-\frac{2}{2+\varepsilon}}
   \end{equation*}
   where we have used Sobolev imbedding theorem and 
   \revB{the $L^p$ interpolation estimate from \eqref{eq:Lp_interpolation_estimate} applied to $\nabla \ubar$}.
   As $\varepsilon$ can be arbitrarily small, $\bar \uu \in L^2(I;H^2(\Om)^2)$ and 
   \revB{$\nabla \bar \uu \in C(\bar I;L^2(\Om)^{2\times 2})$}, this shows 
   $(\bar \uu\cdot \nabla \bar \uu)\in L^{\revD{r}}(I;L^2(\Om)^2)$ for any $\revD{r \in [1,\infty)}$.
   By \Cref{thm:adjoint_regularity}, we have $\bar q \in \revB{C(\bar I)^L} \hookrightarrow L^{\revD{r}}(I)^L$
   for any 
   $\revD{r \in [1,\infty)}$. By the maximal parabolic regularity result of \Cref{thm:max_par_reg}, applied to 
   \eqref{eq:stateequation}, we immediately get the proposed regularity of $\bar \uu$.
   \revD{Here the case $p=1$ follows from the case $p>1$ and boundedness of $I$. 
     A bootstrapping argument as in the proof of \Cref{thm:NS_H2} yields the regularity of 
   $ p_{\bar \uu}$.}
   The improved regularity of $\qbar$ \revB{will immediately follow} as before from the regularity of $\zbar$,
   \revB{see the proof of \Cref{thm:adjoint_regularity}}.
   \revB{Hence, } let us now turn toward the regularity of the adjoint state.
   With integration by parts, the adjoint equation \eqref{eq:adjoint_L4} is equivalent to:
   Find $\zbar \in \VV$ such that $\zbar(T) = 0$ and for all $\vv \in \V$ and a.e.
  $t \in I$, there holds
\begin{equation}
   \revB{\Opair{-\partial_t \bar \zz, \vv}} + \Oprod{\nabla \bar \zz,\nabla \vv} 
      = \Oprod{|\bar \uu-\uu_d|^2(\bar \uu-\uu_d),\vv} + \Oprod{(\bar \uu\cdot \nabla)\bar \zz,\vv}  
      - \Oprod{(\nabla \bar \uu) \bar \zz,\vv} 
      - \sum_{i=1}^L \Gprodj{\bar \uu\cdot \nn,\bar \zz \cdot \vv}{N,i}.
\end{equation}
Our goal is to apply \Cref{thm:stokes_inhom_neumann_data}, to which end we need to show
\begin{enumerate}[(I)]
   \item $\tilde \ff := |\ubar-\uu_d|^2(\ubar-\uu_d) + (\ubar\cdot \nabla) \zbar 
      - (\nabla \ubar)\zbar \in L^2(I;L^2(\Om)^2)$, and \label{proof:I}
   \item $\gg_i := (\ubar\cdot \nn)\zbar |_{\GNi} \in H^{\frac{1}{4}}(I;L^2(\GNi)^2)
      \cap L^2(I;H^{\frac{1}{2}}_{00}(\GNi)^2)$ for $i=1,...,L$. \label{proof:II}
\end{enumerate}
To show (\ref{proof:I}), first note that the assumption $\uu_d \in \revB{L^6(I\times\Om)^2}$ and
$\ubar \in C(\bar I;\V) \hookrightarrow \revB{L^6(I\times\Om)^2}$ yields 
$|\ubar-\uu_d|^2(\ubar-\uu_d) \in L^2(I\times \Om)^2$.
Further, following the proof of \cite[Theorem 3.10]{vexler_error_2024}, with the available regularity of 
$\bar \uu$, we obtain $\ubar \in L^\infty(I \times \Om)^2$.
Moreover, using the embeddings from \revB{\eqref{eq:L4L4_imbeddings}, 
the regularities $\ubar \in \W, \zbar \in \VV$} yield
\begin{align*}
   \nabla \ubar \in \revB{L^4(I\times\Om)^2} \land \zbar \in \revB{L^4(I\times\Om)^2}
   & \Rightarrow (\nabla \ubar) \zbar \in \revB{L^2(I\times\Om)^2}\\
\ubar \in L^\infty(I\times \Om)^2 \land \nabla \zbar \in \revB{L^2(I\times\Om)^2}
   & \Rightarrow (\ubar\cdot \nabla) \zbar \in \revB{L^2(I\times\Om)^2}.
\end{align*}
This shows (\ref{proof:I}). Let us turn towards (\ref{proof:II}).
   It turns out, that the proposed regularity of $\gg_i$ does not immediately follow from 
   $\zbar \in L^2(I;\V) \cap H^1(I;\V^*)$, which we have shown in \Cref{thm:adjoint_solvability},
   but we rather have to iterate a bootstrapping argument, first showing an intermediate result for $\gg_i$.
   From \Cref{prop:instat_stokes_solvability} we obtain that if 
      $\gg_i \in L^2(I;(H^{\frac{1}{2}}_{00}(\GNi))^*)$, there holds 
   \begin{equation}\label{eq:adjoint_regularity_low}
   \|\zbar\|_{L^2(I;\V)} + \|\zbar\|_{H^1(I;\V^*)} 
   \le C \left(\|\tilde \ff\|_{L^2(I;L^2(\Om))} 
   +  \sum_{i=1}^L \|\gg_i\|_{L^2(I;(H^{\frac{1}{2}}_{00}(\GNi))^*)}\right).
   \end{equation}
   In fact, as for any $\vv \in \V$, there holds $\tr(\vv) \in H^{\frac{1}{2}}(\partial \Om)$ and
   $\tr(\vv)|_{\Gamma_D} = \oo$. This yields $\vv|_{\GNi} \in H^{\frac{1}{2}}_{00}(\GNi)$ with a bound
   $\|\vv\|_{H^{\frac{1}{2}}_{00}(\GNi)} \le C\|\vv\|_{H^{\frac{1}{2}}(\partial \Om)}$.
   Overall, we thus have
   \begin{equation*}
      \Gprodj{\gg_i,\vv}{N,i}
      \le \|\gg_i\|_{(H^{\frac{1}{2}}_{00}(\GNi))^*}\|\tr(\vv)\|_{H^{\frac{1}{2}}_{00}(\GNi)}
      \le C\|\gg_i\|_{(H^{\frac{1}{2}}_{00}(\GNi))^*}\|\tr(\vv)\|_{H^{\frac{1}{2}}(\partial \Om)}
      \le C\|\gg_i\|_{(H^{\frac{1}{2}}_{00}(\GNi))^*}\|\vv\|_{\V},
   \end{equation*}
   which shows $\Gprodj{\gg_i,\cdot}{N,i} \in L^2(I;\V^*)$, if 
   $\gg_i \in L^2(I;(H^{\frac{1}{2}}_{00}(\GNi))^*)$.
For more regular $\gg_i$, we have shown in \Cref{thm:stokes_inhom_neumann_data} the following
   \begin{align}
     \|\zbar\|_{L^2(I;\V \cap H^2(\Om)^2)} + \|\zbar\|_{H^1(I;L^2(\Om))} 
       & \le C \left(\|\tilde \ff\|_{L^2(I;L^2(\Om))} 
       + \sum_{i=1}^L \|\gg_i\|_{H^{\frac{1}{4}}(I;L^2(\GNi)) \cap L^2(I;H^{\frac{1}{2}}_{00}(\GNi))}\right).
     \label{eq:adjoint_regularity_high}
   \end{align}
   \revB{By interpolating between the regularities $\zbar \in L^2(I;H^1(\Om)^2)$ and 
   $\zbar \in L^2(I;H^2(\Om)^2) \cap H^1(I;L^2(\Om))$ 
   obtained in \eqref{eq:adjoint_regularity_low} and \eqref{eq:adjoint_regularity_high},
   with the help of \eqref{eq:space_time_intersection_interpolation} and the characterization of 
   the interpolation space for the $\gg_i$ from \Cref{lemm:boundary_condition_interpolation_space},
   we obtain}
\begin{equation}\label{eq:interpolation}
  \|\zbar\|_{L^2(I;H^{1+\theta}(\Om))} + \|\zbar\|_{H^\theta(I;H^{1-\theta}(\Om))} 
  \le C \left(\|\tilde \ff \|_{L^2(I;L^2(\Om))} + 
  \sum_i \|\gg_i\|_{H^{\frac{\theta}{4}}(I;H^{\frac{\theta}{2} -\frac{1}{2}}(\GNi)) \cap L^2(I;H^{\theta-\frac{1}{2}}(\GNi))}\right),
\end{equation}
whenever $\theta \in [\frac12,1)$ and $\gg_i$ has the appropriate regularity. 
In a first step, we will show, that this holds for some $\theta$ arbitrarily close to $1$.
As $\zbar \in \VV$, for sufficiently small $\varepsilon > 0$ and with 
  $\gamma_1=\frac14-\frac\varepsilon2$,
the first result of \Cref{lemm:space_time_trace} implies
\begin{equation}\label{eq:ztrace}
  \zbar|_{\GNi} \in L^2(I;H^{\frac{1}{2}}(\GNi)^2) \cap 
  H^{\frac{1}{4} - \frac{\varepsilon}{2}}(I;H^{\varepsilon}(\GNi)^2).
\end{equation}
Moreover, $\ubar \in \W$, thus 
the second result of \Cref{lemm:space_time_trace} yields with the choice $\gamma_2=\frac{1+\varepsilon}{2}$
\begin{equation}\label{eq:utrace}
  \ubar|_{\GNi} \in H^{\frac{1+\varepsilon}{2}}(I;H^{\frac{1}{2} -\varepsilon}(\GNi)^2).
\end{equation}
As $H^\varepsilon(\GNi) \hookrightarrow L^{\frac{2}{1-2\varepsilon}}(\GNi)$ and 
$H^{\frac{1}{2} -\varepsilon}(\GNi) \hookrightarrow L^{\frac{2}{2 \varepsilon}}(\GNi)$ where 
$\frac{1-2\varepsilon}{2}+\frac{2\varepsilon}{2} = \frac{1}{2}$, an application of 
\Cref{lemm:space_time_multiplication}
together with the time regularities stated in \eqref{eq:ztrace},\eqref{eq:utrace}
yields for arbitrarily small $\sigma>0$, that
\begin{equation}\label{eq:uz_regularity_time}
  (\ubar\cdot \nn) \zbar|_{\GNi} \in H^{\frac{1}{4} -\sigma}(I;L^2(\GNi)^2).
\end{equation}
From \eqref{eq:utrace}, we further have
$\ubar|_{\GNi} \in L^\infty(I;H^{\frac{1}{2} -\varepsilon}(\GNi)^2)$,
which together with \eqref{eq:ztrace} and \Cref{lemm:space_time_multiplication} yields
for arbitrarily small $\alpha > 0$, that 
\begin{equation}\label{eq:uz_regularity_space}
  (\ubar\cdot \nn) \zbar|_{\GNi} \in L^2(I;H^{\frac{1}{2} - \varepsilon - \alpha}(\GNi)^2).
\end{equation}
To write the regularities stated in \eqref{eq:uz_regularity_time},\eqref{eq:uz_regularity_space} in terms 
  of interpolation spaces, we search for $\varepsilon,\sigma, \alpha > 0$, $\theta \in [0,1]$ such that 
\begin{equation*}
  \left[\theta - \frac{1}{2} = \frac{1}{2} - \varepsilon - \alpha \land \frac{\theta}{4} = \frac{1}{4} - \sigma\right]
  \Leftrightarrow
  \left[\theta = 1- \varepsilon - \alpha \land \theta = 1 - 4\sigma\right].
\end{equation*}
Thus, for arbitrarily small $\sigma > 0$, after choosing $\varepsilon + \alpha = 4\sigma$ and 
$\theta = 1 - 4\sigma$, we 
have just shown in \eqref{eq:uz_regularity_time},\eqref{eq:uz_regularity_space}, that 
  \begin{equation*}
  \|(\ubar \cdot \nn)\zbar\|_{H^{\frac\theta4}(I;H^{\frac\theta2-\frac12}(\GNi))\cap
  L^2(I;H^{\theta-\frac12}(\GNi))} < \infty.
  \end{equation*}
  Using \eqref{eq:interpolation}, this shows, that 
  $\zbar \in H^{\theta}(I;H^{1-\theta}(\Om)^2)\cap L^2(I;H^{1+\theta}(\Om)^2)$.
  We now proceed from this intermediate regularity of $\zbar$ to the full 
  $L^2(I;\V\cap H^2(\Om)^2)\cap H^1(I;L^2(\Om)^2)$.
  For $\sigma >0$ close to $0$, it holds 
  $\gamma_3 = \frac14 \in [0,\theta)\cap (\frac{\theta}{2}-\frac14,\frac{\theta}{2}+\frac14)$.
  Thus an application of the third result in \Cref{lemm:space_time_trace} to $\zbar$ with $\gamma_3$ yields
  \begin{equation*}
     \zbar|_{\GNi} \in H^{\frac14}(I;H^{1-4\sigma}(\GNi)^2) \hookrightarrow H^{\frac14}(I;L^\infty(\GNi)^2).
  \end{equation*}
  This, together with \eqref{eq:utrace} and \Cref{lemm:space_time_multiplication} yields
   \begin{equation*}
     (\ubar\cdot \nn) \zbar|_{\GNi} \in H^{\frac{1}{4}}(I;L^2(\GNi)^2).
   \end{equation*}
   Lastly, another application of \Cref{lemm:space_time_trace} to $\zbar$ with 
   $\gamma_4 = \frac{1}{2} - 2\sigma$ yields
  \begin{equation}\label{eq:ztrace_almost_optimal}
  \zbar|_{\GNi} \in H^{\frac12 - 2 \sigma}(I;H^{\frac12}(\GNi)^2).
  \end{equation}
  By the same argument, with which we obtained \eqref{eq:utrace}, we can also show that 
  $\ubar|_{\GNi} \in H^{\frac{1-\delta}{2}}(I;H^{\frac{1}{2} +\delta}(\GNi)^2)$ for small $\delta > 0$.
  This regularity of $\ubar$, combined with \eqref{eq:ztrace_almost_optimal}, Sobolev imbedding in time, and 
  \Cref{lemm:space_time_multiplication} yields
\begin{equation*}
  (\ubar \cdot \nn) \zbar |_{\GNi} \in L^2(I;H^{\frac{1}{2}}(\GNi)^2).
\end{equation*}
Let us finally comment on the compatibility condition $H^{\frac{1}{2}}_{00}(\GNi)$.
In general we have shown
$\tr((\ubar \cdot \nn) \zbar) \in L^2(I;H^{\frac{1}{2}}(\partial \Om)^2)$.
As $\tr( \ubar)|_{\Gamma_{D}} = \tr( \zbar)|_{\Gamma_{D}}=\oo$, this shows 
$( \ubar \cdot \nn) \zbar |_{\Gamma_{N,i}} \in L^2(I;H^{\frac{1}{2}}_{00}(\GNi)^2)$,
which concludes the proof.
\end{proof}

Note that while this improved regularity of the adjoint state is an interesting result 
in itself for the continuous optimal control problem, 
it is essential in deriving error estimates for the discretized optimal control problem.
While a derivation of error estimates is beyond the scope of this work, the improved 
regularity of the adjoint state, can be viewed as one important step towards this goal.
The second order conditions shown in the next section are another main ingredient.

\subsection{Second order conditions}
  Following \cite{bonnans_optimal_1999}, in order to discuss second order optimality conditions, we define the 
  critical cone, as the subset of tangent cone to $\Qad$, in which no first order increase w.r.t the objective 
  function $j$ occurs. Thus let us define 
  \begin{align*}
    T_{\Qad}(q) &:= \{\dq \in \revA{Q} : (\dq_i (t) \le 0 \text{ if } q_i(t) = q_{b,i} \text{ a.e. in } I )
        \land
     (\dq_i(t) \ge 0 \text{ if } q_i(t) = q_{a,i} \text{ a.e. in } I) \ i=1,...,L \},\\
        \mathcal C(q) &:= \{\dq \in T_{\Qad}(q) : j'(q)(\dq) \le 0\}.
  \end{align*}
  If the first order necessary optimality conditions \eqref{eq:first_order_opt_cond} hold,
  we can characterize the critical cone \revA{$\mathcal C(\qbar)$} in terms of pointwise conditions.
  \revB{ Due to \eqref{eq:bang_bang_structure}, if $\qbar$ satisfies the first order optimality conditions,
     and $\dq \in T_{\Qad}(\qbar)$,
     \begin{align*}
        \text{for a.e. } t \in I^i_-:=  \{s \in I: \alpha \bar q_i(s) \revB{-} \Gprodj{\zbar(s),\nn}{N,i} < 0\}
        \text{ there holds } \qbar_i(t) = q_{b,i} \text{ and thus } \dq_i(t) \le 0,\\
        \text{for a.e. } t \in I^i_+ := \{s \in I: \alpha \qbar_i(s) \revB{-} \Gprodj{\zbar(s),\nn}{N,i} > 0\}
        \text{ there holds } \qbar_i(t) = q_{a,i} \text{ and thus } \dq_i(t) \ge 0.
     \end{align*}
     If any of the above inequalities for $\dq_i$ were to hold 
     strictly on a subset of $I^i_{\pm}$ of positive measure, this would imply 
     \begin{equation*}
         \int_I (\alpha \bar q_i \revB{-} \Gprodj{\zbar,\nn}{N,i})\dq_i \dt > 0,
     \end{equation*}
     and thus $j'(\qbar)(\dq) > 0$.
  }
  Hence for $\qbar$ satisfying the first order necessary optimality
  conditions, and $\dq \in T_{\Qad}(\qbar)$, the following conditions are equivalent:
  \begin{equation*}
     j'(\qbar)(\dq) \le 0 \Leftrightarrow \text{for a.e. $t \in I$, with } 
     \alpha \bar q_i \revB{-} \Gprodj{\zbar,\nn}{N,i} \neq 0, \text{ it holds } \dq_i = 0.
  \end{equation*}
  We just proved the implication from left to right, the converse direction is trivial.
  Thus for any $\qbar \in \Qad$ satisfying the first order necessary optimality conditions, the 
  critical cone has the following characterization
   \begin{equation*}
      \mathcal C(\qbar) = \left\{\dq \in Q \text{ s.t.}
      \revB{\text{ for }i=1,...,L :}
      \begin{cases}{}
         \dq_i \ge 0 & \text{ if } \qbar_i(t) = q_{a,i},\\
         \dq_i \le 0 & \text{ if } \qbar_i(t) = q_{b,i},\\
         \dq_i = 0  & \text{ if } \alpha \qbar_i(t) \revB{-} \Gprodj{\zbar(t),\nn}{N,i} \neq 0,
      \end{cases}
   \right\}.
   \end{equation*}
   \revA{With this characterization of the critical cone, we can now turn towards second order conditions.}
   Using the formulas and notations of \Cref{thm:state_derivatives}
for derivatives of $S$, we can express the derivatives of $j$ as follows.
\begin{align}
j'(q)(\dq) &= \partial_q J(q,\uu) (\dq) + \partial_u J(q,\uu) (\du)\label{eq:gradient}\\
  j''(q)(\dq,\rhq) &= \partial_{qq} J(q,\uu)( \dq, \rhq) + \partial_{uq} J(q,\uu) (\dq,\rhu)
  \label{eq:hessian}\\
                   &\quad +\partial_{qu} J(q,\uu)(\du,\rhq) 
                   + \partial_{uu} J(q,\uu)(\du,\rhu) + \partial_u J(q,\uu)(\su) \notag
\end{align}
In order to show the quadratic growth condition, we need to verify several assumptions on $j$, 
we summarize our findings in the following Lemma.
\begin{lemma}\label{lemm:objective}
   Let $q_n \to q$ and $\dq_n \rightharpoonup \dq$ in $\revA{Q}$, such that $q \in \mathcal Q^S$.
   Let further $\uu = S(q)$, \revB{and $\uu_n = S(q_n)$ which are well defined
      for large enough $n$ due to the openness of $\mathcal Q^S$. Consider the first and second 
      derivatives }
   $\du_n = S'(q)(\dq_n)$ and $\du = S'(q)(\dq)$ as well as 
   $\delta^2 \uu_n = S''(q)(\dq_n,\dq_n)$ and $\delta^2 \uu = S''(q)(\dq,\dq)$.
   Then there exist subsequences $\{q_n\}, \{\dq_n\}$ denoted by the same index, such that 
    \begin{align}
      \du_n \to  \du \text{ in } L^2(I;\V) \cap L^\infty(I;\Hspace)\label{eq:grad_strong_convergence}\\
      \delta^2 \uu_n \to  \delta^2 \uu \text{ in } L^2(I;\V) \cap L^\infty(I;\Hspace) \label{eq:hess_strong_convergence}
    \end{align}
    and there hold
    \begin{align}
      j'(q)(\dq) &=  \lim_{n\to \infty} j'(q_n)(\dq_n) \label{eq:convergence_dir_derivative} \\
      j''(q)(\dq,\dq) &\le \liminf_{n\to \infty} j''(q_n)(\dq_n,\dq_n). \label{eq:lower_semicont_hessian}
    \end{align}
    Moreover, \revB{if} $\dq = 0$, there exists $\gamma > 0$ such that
    \begin{equation*}
       \liminf_{n\to\infty} \gamma \|\dq_n\|_{L^2(I;L^2(\Omega)}^2 \le \liminf_{n\to\infty} j''(q_n)(\dq_n,\dq_n).
    \end{equation*}
\end{lemma}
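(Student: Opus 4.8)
The plan is to establish the two strong convergences \eqref{eq:grad_strong_convergence}--\eqref{eq:hess_strong_convergence} first, and then read off the statements on $j'$ and $j''$ from the representations \eqref{eq:gradient}--\eqref{eq:hessian}, the regularization term $\tfrac\alpha2\|q\|_{L^2(I)}^2$ ultimately supplying both the lower semicontinuity and the coercivity.

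\textbf{Strong convergence of the first derivatives.} I would set $\ee_n := \du_n - \du$. By linearity of $S'(q)$ and \eqref{eq:first_derivatives}, $\ee_n \in \VV$ solves the linearized equation at the fixed linearization point $\uu = S(q)$ with vanishing initial datum and right-hand side $-\sum_{i=1}^L(\dq_{n,i}-\dq_i)\nabla\zeta_i$, which tends weakly to $\oo$ in $L^2(I;\V^*)$ since $\dq_n\rightharpoonup\dq$ in $Q$. A Gronwall/energy estimate as in \Cref{thm:linearized_NS}, with the additional term $\Oprod{(\ee_n\cdot\nabla)\uu,\ee_n}$ absorbed via \eqref{eq:trilinear_l4_est}, shows $\{\ee_n\}$ bounded in $\VV$; by the compact embedding of \Cref{thm:compact_imbedding} with $X_0=\V$, $X=\Hspace$, $X_1=\V^*$ a subsequence converges strongly in $L^2(I;\Hspace)$ and weakly in $L^2(I;\V)$, and passing to the limit identifies the limit as the unique (\Cref{thm:state_derivatives}) solution of the homogeneous linearized problem, namely $\oo$. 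To upgrade this to strong convergence in $L^2(I;\V)\cap L^\infty(I;\Hspace)$ I would test the equation for $\ee_n$ with $\ee_n$ and integrate over $[0,s]$; using \eqref{eq:trilinear_boundary_term} the convective term becomes the boundary integral $\tfrac12\dOprod{\uu\cdot\nn,|\ee_n|^2}$, which I bound with the trace estimate of \Cref{thm:trace_interpolation_2} exactly as in the proof of \Cref{thm:uniqueness_stateeq}. Since $\uu\in\W\hookrightarrow C(\bar I;\V)$ by \eqref{eq:space_W_definition}, all coefficients multiplying $\|\ee_n\|_{L^2(\Om)}^2$ stay bounded in $L^\infty(I)$, while the source contribution $\sum_i\int_0^s(\dq_{n,i}-\dq_i)\Oprod{\nabla\zeta_i,\ee_n}\,\dt$ vanishes because $\Oprod{\nabla\zeta_i,\ee_n}\to 0$ in $L^2(I)$ from the strong $L^2(I\times\Om)$ convergence; hence $\sup_s\|\ee_n(s)\|_{L^2(\Om)}^2 + \|\nabla\ee_n\|_{L^2(I\times\Om)}^2\to 0$, which is \eqref{eq:grad_strong_convergence}. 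This upgrade is the crux of the whole lemma: unlike in the pure Dirichlet case the convective term leaves the non-vanishing boundary contribution, so everything rests on \Cref{thm:trace_interpolation_2} together with the $C(\bar I;\V)$-bound that keeps the Gronwall coefficients integrable.

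\textbf{Strong convergence of the second derivatives.} Here I would set $\eta_n := \delta^2\uu_n - \delta^2\uu$ and observe from \eqref{eq:second_derivatives} (with $\dq=\rhq=\dq_n$, respectively $\dq=\rhq=\dq$) that $\eta_n$ solves the same linearized equation with zero initial value and right-hand side $s_n := -2\,[(\du_n\cdot\nabla)\du_n-(\du\cdot\nabla)\du]$. Testing with $\eta_n$ and treating the $\uu$-dependent terms exactly as above, the only new contribution is $\IOprod{s_n,\eta_n}$; splitting $s_n = -2[((\du_n-\du)\cdot\nabla)\du_n + (\du\cdot\nabla)(\du_n-\du)]$ and estimating each summand via \eqref{eq:trilinear_l4_est} and the embedding $\VV\hookrightarrow L^4(I\times\Om)^2$ from \eqref{eq:L4L4_imbeddings}, the factor $\du_n-\du\to\oo$ in $L^4(I\times\Om)$ (the previous step) forces this term to vanish, and the same Gronwall argument yields \eqref{eq:hess_strong_convergence}.

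\textbf{Convergence of $j'$ and lower semicontinuity of $j''$.} Evaluating \eqref{eq:gradient}--\eqref{eq:hessian} for our functional gives $j'(q)(\dq)=\alpha\Iprod{q,\dq}+\IOprod{|\uu-\uu_d|^2(\uu-\uu_d),\du}$ and an expression for $j''(q)(\dq,\dq)$ whose only term not continuous in the state is $\alpha\|\dq\|_{L^2(I)}^2$. The derivatives entering $j'(q_n)(\dq_n)$ and $j''(q_n)(\dq_n,\dq_n)$ are taken at the base point $q_n$; since $S$ is of class $C^\infty$ (\Cref{thm:state_derivatives}) the operators $S'(q_n), S''(q_n)$ converge to $S'(q), S''(q)$ in operator norm, so together with \eqref{eq:grad_strong_convergence}--\eqref{eq:hess_strong_convergence} and boundedness of $\{\dq_n\}$ one gets $S'(q_n)(\dq_n)\to\du$ and $S''(q_n)(\dq_n,\dq_n)\to\delta^2\uu$ strongly, while $\uu_n=S(q_n)\to\uu$ in $\W$. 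All state-dependent terms then converge by these strong convergences and continuity of the Nemytskii map $\vv\mapsto|\vv-\uu_d|^2(\vv-\uu_d)$ from $L^4(I\times\Om)^2$ to $L^{4/3}(I\times\Om)^2$, and $\alpha\Iprod{q_n,\dq_n}\to\alpha\Iprod{q,\dq}$ by the strong-weak pairing, giving \eqref{eq:convergence_dir_derivative}. For \eqref{eq:lower_semicont_hessian} the only non-continuous term $\alpha\|\dq_n\|_{L^2(I)}^2$ is weakly lower semicontinuous, so adding the convergent remainder yields $j''(q)(\dq,\dq)\le\liminf_n j''(q_n)(\dq_n,\dq_n)$.

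\textbf{Coercivity for $\dq=0$.} If $\dq=0$ then $\du=S'(q)(0)=\oo$ and $\delta^2\uu=\oo$, so by the two strong convergences $S'(q_n)(\dq_n)\to\oo$ and $S''(q_n)(\dq_n,\dq_n)\to\oo$ strongly in $L^2(I;\V)\cap L^\infty(I;\Hspace)\hookrightarrow L^4(I\times\Om)^2$. Every state-dependent term of $j''(q_n)(\dq_n,\dq_n)$ is, by Hölder's inequality, controlled by a product containing one of these null factors against the bounded quantities $\uu_n-\uu_d$, hence tends to $0$, so $j''(q_n)(\dq_n,\dq_n)=\alpha\|\dq_n\|_{L^2(I)}^2+o(1)$. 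Choosing $\gamma=\alpha$ then gives $\liminf_n\gamma\|\dq_n\|_{L^2(I)}^2=\liminf_n j''(q_n)(\dq_n,\dq_n)$, in particular the asserted inequality.
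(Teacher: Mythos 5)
Your proposal is correct, and its second half (the continuity of $j'$, the lower semicontinuity of $j''$ via the single non-strongly-convergent term $\alpha\|\dq_n\|_{L^2(I)}^2$, and the coercivity with $\gamma=\alpha$ when $\dq=0$) matches the paper's argument almost verbatim; you are in fact slightly more careful than the paper in distinguishing the base points $q_n$ and $q$ of the derivatives, which you reconcile through operator-norm continuity of $q\mapsto S'(q),S''(q)$, where the paper simply invokes ``continuity of the derivative in $q$.'' The genuine difference lies in how you obtain \eqref{eq:grad_strong_convergence}--\eqref{eq:hess_strong_convergence}. The paper gets both in one line from the compact embeddings \eqref{eq:compact_imbedding}, $\W \cimbed L^2(I;\V)$ and $\W \cimbed C(\bar I;\Hspace)$: since $S'(q)$ and $S''(q)$ map boundedly into $\W$ by \Cref{thm:state_derivatives}, the sequences $\du_n$ and $\delta^2\uu_n$ are bounded in $\W$, so subsequences converge strongly in the asserted topologies, and the limits are identified by passing to the limit in the (bi)linear equations. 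You instead work entirely at the $\VV$ level: Aubin--Lions compactness to get strong $L^2(I\times\Om)$ convergence of $\ee_n=\du_n-\du$, followed by an energy/Gronwall upgrade in which the boundary term left by the convective part is controlled via \Cref{thm:trace_interpolation_2}, and an analogous two-step argument for $\eta_n=\delta^2\uu_n-\delta^2\uu$ with the quadratic source split and estimated through \eqref{eq:trilinear_l4_est}. Your route is longer but buys something: it does not use the $H^2$-in-space regularity of the linearized states at all, only their $\VV$-regularity, so it would survive in settings where \Cref{thm:mixed_Stokes_h2} (hence $\W$-regularity of $\du$, $\su$) is unavailable. The paper's route is essentially free once \Cref{thm:state_derivatives} is in place. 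Both are valid.
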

\begin{proof}
The claims \eqref{eq:grad_strong_convergence} and \eqref{eq:hess_strong_convergence} are consequences of
the compact embedding of \eqref{eq:compact_imbedding}.
To show \eqref{eq:convergence_dir_derivative}, we insert $\pm j'(q)(\dq_n)$, apply triangle inequality 
and obtain
  \begin{equation*}
    |j'(q)(\dq) - j'(q_n)(\dq_n)| \le |j'(q)(\dq - \dq_n)| + |j'(q)(\dq_n) - j'(q_n)(\dq_n)|,
  \end{equation*}
  where the first term converges due to the weak convergence of $\dq_n$ and the second due to 
  the strong convergence of $q_n$ and the fact, that the derivative is continuous in $q$.
  In similar fashion, we can show \eqref{eq:lower_semicont_hessian}. 
  More specifically, \revA{if we insert $q_n$, $\dq_n$ and $\rhq_n = \dq_n$ into the individual terms of the
     representation of $j''(q_n)(\dq_n,\dq_n)$ obtained from \eqref{eq:hessian}, we get}
  \begin{align*}
    \lim_{n\to \infty}
    \partial_{uq} J(q_n,\uu_n) (\dq_n,\du_n)
    +\partial_{qu} J(q_n,\uu_n)(\du_n,\dq_n) 
    + \partial_{uu} J(q_n,\uu_n)(\du_n,\du_n)
    + \partial_u J(q_n,\uu_n)(\delta^2 \uu_n)\\
    = \partial_{uq} J(q,\uu) (\dq,\du)
    +\partial_{qu} J(q,\uu)(\du,\dq) 
    + \partial_{uu} J(q,\uu)(\du,\du)
    + \partial_u J(q,\uu)(\delta^2 \uu).
  \end{align*}
  Hence, for all
  but the first term on the right hand side of \eqref{eq:hessian}, we obtain convergence to the 
  limiting term, as only one weakly converging sequence is present.
  To show \eqref{eq:lower_semicont_hessian}, it thus suffices to show
  \begin{equation*}
    \partial_{qq}J(q,\uu)(\dq,\dq) \le \liminf_{n\to \infty} \partial_{qq} J(q_n,\uu_n)(\dq_n,\dq_n).
  \end{equation*}
  Here we cannot show equality due to the presence of two weakly convergent sequences.
  By writing out the corresponding derivatives of the objective functions, this is nothing else than
  \begin{equation*}
     \alpha \|\dq\|_{L^2(I)}^2 = \alpha \Iprod{\dq,\dq} 
     \le \liminf_{n\to \infty} \alpha \Iprod{\dq_n,\dq_n} = \liminf_{n\to\infty} \alpha \|\dq_n\|_{L^2(I)}^2,
  \end{equation*}
  which holds true due to the weak lower semicontinuity of the norm. Thus, we have shown
  \eqref{eq:lower_semicont_hessian}.
  In the same fashion, if 
  $\dq = 0$ then all but the first term on the right hand side of \eqref{eq:hessian} converge to $0$.
  What remains is 
  \begin{equation*}
    \liminf_{n\to\infty} j''(q_n)(\dq_n,\dq_n) = \liminf_{n\to\infty} \alpha \ltwoltwonorm{\dq_n}^2,
  \end{equation*}
  i.e., we can choose $\gamma = \alpha$. This shows the last claim of the lemma.
\end{proof}
With these technical properties fulfilled, we are able to show the following

\begin{theorem}
   Let $\bar q \in \Qad$ be a local solution to \eqref{eq:reduced_problem}, then 
   it satisfies \revB{$j''(\bar q)(\dq,\dq) \ge 0$ for all $\dq \in \mathcal C(\bar q)$.}
   Furthermore, if $\bar q \in \Qad$ satisfies
   \begin{align*}
      \revB{
      j'(\bar q)(\dq) \ge 0 \quad \textforall \dq \in T_{\Qad}(\qbar) \quad \text{and} \quad
      j''(\bar q)(\dq,\dq) > 0 \quad \textforall \dq \in \mathcal C (\qbar) \backslash \{0\},
   }
   \end{align*}
   then $\bar q$ is a local solution to \eqref{eq:reduced_problem}, more specifically,
   there exist $\epsilon,\delta > 0$ such that 
   \begin{equation*}
      j(q) \ge j(\qbar) + \delta \|q- \qbar\|_{L^2(I)}^2 
      \qquad \revB{\text{for all }} q \in \revB{\Qbox} \cap B_\epsilon (\bar q).
   \end{equation*}
\end{theorem}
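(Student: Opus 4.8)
The plan is to establish both assertions with the standard second order calculus of the reduced problem, using that $j$ is of class $C^\infty$ (\Cref{thm:optimality_condition}) and, for the sufficiency part, the convergence, lower semicontinuity and coercivity properties collected in \Cref{lemm:objective}. Throughout I use that $\mathcal Q^S$ is open (\Cref{prop:open_dataset_H2}), so that $S(q)$, and hence $j(q)$, together with its derivatives, is well defined on a whole $L^2$-ball around $\qbar$, and that the segment $[\qbar,q]$ lies in $\Qbox\cap\mathcal Q^S$ whenever $q\in\Qbox$ is close to $\qbar$. For the necessary condition, let $\dq\in\mathcal C(\qbar)$. Local optimality gives the first order condition \eqref{eq:first_order_opt_cond}, hence $j'(\qbar)(\dq)\ge 0$; combined with $j'(\qbar)(\dq)\le 0$ from the definition of the critical cone this forces $j'(\qbar)(\dq)=0$. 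I would first treat radial directions, i.e. those $\dq$ with $\qbar+t\dq\in\Qbox$ for all small $t>0$. For such $\dq$ a second order Taylor expansion together with $j'(\qbar)(\dq)=0$ and local optimality yields $0\le j(\qbar+t\dq)-j(\qbar)=\tfrac{t^2}{2}j''(\qbar)(\dq,\dq)+o(t^2)$, so $j''(\qbar)(\dq,\dq)\ge 0$ after dividing by $t^2$ and letting $t\to 0$. The general case follows since $\Qbox$ is polyhedric in $Q=L^2(I)^L$, whence the radial critical directions are dense in $\mathcal C(\qbar)$, and $j''(\qbar)(\cdot,\cdot)$ is a continuous bilinear form.

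The substantive part is sufficiency, which I would prove by contradiction. If the quadratic growth fails, then for every $n$ there is $q_n\in\Qbox$ with $q_n\to\qbar$ in $Q$ and $j(q_n)<j(\qbar)+\tfrac1n\rho_n^2$, where $\rho_n:=\|q_n-\qbar\|_{L^2(I)}>0$; openness of $\mathcal Q^S$ ensures $q_n\in\Qad$ for large $n$. Set $\dq_n:=(q_n-\qbar)/\rho_n$, so $\|\dq_n\|_{L^2(I)}=1$, and pass to a subsequence with $\dq_n\rightharpoonup\dq$ in $Q$. I would then show $\dq\in\mathcal C(\qbar)$: each $\dq_n$ inherits the sign constraints defining $T_{\Qad}(\qbar)$ from feasibility of $q_n$ (e.g. where $\qbar_i=q_{a,i}$ one has $q_{n,i}\ge q_{a,i}$, hence $\dq_{n,i}\ge 0$), and this convex closed, thus weakly closed, cone contains the weak limit $\dq$. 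A mean value expansion $j(q_n)=j(\qbar)+j'(\qbar)(q_n-\qbar)+\tfrac12 j''(\hat q_n)(q_n-\qbar,q_n-\qbar)$ with $\hat q_n=\qbar+\tau_n(q_n-\qbar)$, divided by $\rho_n$, together with boundedness of $j''$ on bounded sets and $\dq_n\rightharpoonup\dq$, gives $\limsup_n j'(\qbar)(\dq_n)\le 0$, hence $j'(\qbar)(\dq)\le 0$; with the first order condition this yields $j'(\qbar)(\dq)=0$, so indeed $\dq\in\mathcal C(\qbar)$.

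I then distinguish two cases using the same expansion divided by $\rho_n^2$. Since $\dq_n\in T_{\Qad}(\qbar)$, the first order assumption gives $j'(\qbar)(\dq_n)\ge 0$, so $\tfrac12 j''(\hat q_n)(\dq_n,\dq_n)\le\tfrac{j(q_n)-j(\qbar)}{\rho_n^2}<\tfrac1n$ and therefore $\limsup_n j''(\hat q_n)(\dq_n,\dq_n)\le 0$. If $\dq\neq 0$, then $\dq\in\mathcal C(\qbar)\setminus\{0\}$, and the lower semicontinuity \eqref{eq:lower_semicont_hessian} of \Cref{lemm:objective}, applied with $\hat q_n\to\qbar$ in place of the converging points, yields $0<j''(\qbar)(\dq,\dq)\le\liminf_n j''(\hat q_n)(\dq_n,\dq_n)\le 0$, a contradiction. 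If $\dq=0$, the coercivity estimate of \Cref{lemm:objective} (with $\gamma=\alpha$) gives $\alpha=\alpha\liminf_n\|\dq_n\|_{L^2(I)}^2\le\liminf_n j''(\hat q_n)(\dq_n,\dq_n)\le 0$, again impossible since $\alpha>0$. Hence the quadratic growth holds, which in particular makes $\qbar$ a strict local minimizer.

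I expect the main obstacle to be the rigorous identification of the weak limit $\dq$ as an element of the critical cone and, above all, the justification that \Cref{lemm:objective} applies with the intermediate points $\hat q_n$ rather than a single fixed control: one must check that $\hat q_n\to\qbar$ strongly, that $\hat q_n\in\mathcal Q^S$ for large $n$ by openness, and that the associated linearized and second order states converge as the lemma requires, which is precisely what \Cref{lemm:objective} supplies. A secondary technical point is the polyhedricity and density argument in the necessary part, ensuring that the inequality obtained along radial feasible directions extends by continuity of $j''$ to the whole critical cone.
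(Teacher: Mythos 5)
Your proof is correct in substance, but it takes a genuinely different route from the paper: the paper's proof is a two-line citation, verifying via \Cref{lemm:objective} the hypotheses of the abstract second order theory of Casas--Tr\"oltzsch \cite[Theorem 2.3]{casas_general_2012} (in the form of \cite[Theorem 3.2]{hoppe_optimal_2022}), whereas you unfold that abstract theorem into a self-contained argument. Your sufficiency part is precisely the contradiction argument hiding inside those cited theorems --- normalized sequence $\dq_n=(q_n-\qbar)/\rho_n$, identification of the weak limit as a critical direction, Taylor expansion with intermediate points $\hat q_n\to\qbar$, and then the dichotomy $\dq\neq 0$ (handled by \eqref{eq:lower_semicont_hessian}) versus $\dq=0$ (handled by the coercivity statement with $\gamma=\alpha$) --- and the hypotheses of \Cref{lemm:objective} are exactly tailored to make each of these steps go through, including for the intermediate points, since $\|\hat q_n-\qbar\|\le\|q_n-\qbar\|\to 0$ and openness of $\mathcal Q^S$ keeps everything well defined; note also that the absence of a two-norm discrepancy (all topologies are $L^2(I)^L$) is what lets the mean-value form of the remainder and the continuity of $j''$ work without further ado. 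What your approach buys is transparency: one sees exactly where the $\alpha\|\dq\|^2$ term rescues the degenerate case $\dq=0$. What it costs is that you must supply, for the necessary condition, the polyhedricity/density argument showing that radial critical directions are dense in $\mathcal C(\qbar)$ --- you correctly flag this but do not carry it out; for box constraints in $L^2(I)^L$ this is the standard truncation argument (set $\dq$ to zero where the active-set/gradient conditions are violated up to a threshold and let the threshold tend to zero), and it is exactly the step the paper outsources to the cited references. With that step filled in, your proof is complete and equivalent in strength to the paper's.
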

\begin{proof}
   This is a direct consequence of the technical results shown in \Cref{lemm:objective} and
      the general second order optimality conditions shown in \cite[Theorem 2.3]{casas_general_2012}.
      Due to a more accessible formulation of the assumptions,
      we have chosen to work with its generalization \cite[Theorem 3.2]{hoppe_optimal_2022}, 
      which is also applicable to problems with state constraints. 
      As we have continous differentiability of $S$ as a function on $\mathcal{Q}^S$,
      we can avoid the two-norm-discrepancy.
\end{proof}

\section{Numerical Examples}\label{sect:num_exp}
In this concluding section, we wish to present some numerical results obtained with our approach.
Let us first discuss the solution of only the state equation for some given data.
  We will see, that the issue of non-existence of solutions is not a purely theoretical phenomenon,
  but actually happens in fairly simple domains.
  In the construction of our examples, we rely on the results presented in 
  \cite{lanzendorfer_multiple_2020}, which focussed on stationary problems, but contained some instationary problems as well.
  They considered the equations on a ring segment with \eqref{eq:DN} conditions specified on the 
  inner and outer radius. As we wish to use a geometry fitting \Cref{ass:domain} and would like to highlight,
  that the blowup of solutions is not related to this special domain, we construct the following domain:
  \begin{equation*}
    \Omega = \{\xx \in \R^2: \xx_1 \in [0,L] \land \xx_2 \in [-\phi(\xx_1),\phi(\xx_1)]\},
  \end{equation*}
  where $\phi\colon \R\to\R$ is the following cubic function depending on the three 
  parameters $L,r,R>0$
  \begin{equation*}
    \phi(s) = \dfrac{2(r-R)}{L^3}s^3 + \dfrac{3(R-r)}{L^2}s^2 + r.
  \end{equation*}
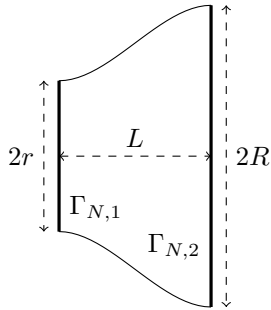
\begin{figure}[H]
  \centering
  \begin{tikzpicture}
    \def\Lo{2}
    \def\Lu{2}
    \def\Ro{2}
    \def\ro{1}
    \def\Ru{2}
    \def\ru{1}
    \def\Ao{3}
    \def\Au{4}
    \draw[domain=0:\Lo] plot(\x,{2*(\ro-\Ro)/(\Lo^3)*\x*\x*\x + 3*(\Ro-\ro)/(\Lo^2)*\x*\x + \ro});
    \draw[domain=\Lu:0] plot(\x,{-(2*(\ru-\Ru)/(\Lu^3)*\x*\x*\x+ 3*(\Ru-\ru)/(\Lu^2)*\x*\x + \ru)});
    \draw (0,\ro) -- (0,-\ru);
    \draw (\Lo,\Ro) -- (\Lu,-\Ru);
    \draw[<->,dashed] (-.2,\ro) -- (-.2,0) node[left] {$2r$} -- (-.2,-\ru);
    \draw[<->,dashed] (\Lo+.2,\Ro) -- (\Lo+.2,0) node[right] {$2R$} -- (\Lo+.2,-\Ru);
    \draw[<->,dashed] (0,0) -- (\Lo/2,0) node[above] {$L$} -- (\Lo,0);
    \draw[line width=1.2pt] (0,\ro)--(0,-\ru) node[above right] {$\Gamma_{N,1}$};
    \draw[line width=1.2pt] (\Lo,\Ro)--(\Lo,-\Ru) node[above=.8, left] {$\Gamma_{N,2}$};
  \end{tikzpicture}
  \caption{Domain $\Omega$ depending on the parameters $r,R,L$.}
\end{figure}
It is straightforward to verify, that the function 
\begin{equation*}
  \ww = \begin{pmatrix}
    \ww_1 \\ \ww_2
  \end{pmatrix}
  = \begin{pmatrix}
    \phi^{-1}(\xx_1)\\ - \xx_2 \cdot \phi'(\xx_1)\cdot \phi(\xx_1)^{-2}
  \end{pmatrix}
\end{equation*}
satisfies $\ww \in \Hspace$. In order to better visualize the results obtained, we define the 
following flowrate functional 
\begin{equation*}
  Q(\vv) = -\int_{\Gamma_{N,1}} \vv \cdot \nn \, ds,
\end{equation*}
measuring the amount of flow through $\Gamma_{N,1}$, and thus for divergence free functions also through
$\Gamma_{N,2}$. Whenever $\vv$ is time dependent, the flowrate is also time dependent.
\subsection{Blowup in finite time}\label{subsect:blowup_state}
Our first subgoal is to identify, for which data blowup happens.
Empirically, we determined that if either
$\uu_0$ is too large (e.g., $\uu_0 = 15 \ww$) or the difference $q_1 -q_2$ is too large (e.g., $50$)
then blowup happens sooner or later. We report our findings in \Cref{fig:blowup}.
Heuristically, this blowup can be explained as follows. Due to the difference in crossection and the 
divergence constraint, the flow can either slow down and spread evenly over the wider crossection, or 
more fluid can be drawn in at the sides of the left boundary to ensure conservation of mass.
As the latter phenomenon amplifies itself,
at some point the solution blows up, which results in the timestepping scheme to stop prematurely.
%
%
\begin{figure}[H]
  \begin{subfigure}{.38\linewidth}
    \centering
    \includegraphics[width=.8\linewidth]{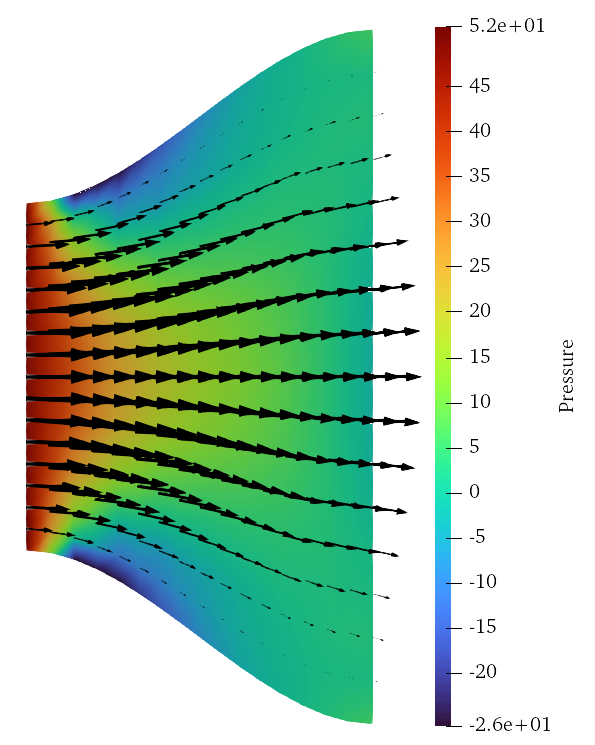}
    \caption{Pressure and velocity field at $t=0.5$ for $\uu_0 = \oo$ and $q_1 - q_2 \equiv 50$}
  \end{subfigure}
  \hspace{5mm}
  \begin{subfigure}{.5\linewidth}
  \begin{tikzpicture}[]
    \begin{axis}[legend pos=north west, legend style={draw=none},
                  xmin=0, xmax=1, xlabel={$t$},ymin=0,ymax=100]
      \addplot[black] table[x expr=\thisrow{"Time"}/100,y="avg(U001)",col sep=comma]{plots/U0_blowup.csv};
      \addplot[black,dashed] table[x expr=\thisrow{"Time"}/100,y="avg(U001)",col sep=comma]{plots/Pdiff_blowup.csv};
      \legend{{$\uu_0 = 15 \ww {, \ } q_1 - q_2 = 0$},{$\uu_0 = \oo {, \ } q_1 - q_2 \equiv 50$}}
    \end{axis}
  \end{tikzpicture}
  \caption{Flowrate $Q(\uu)$ for two different choices of data.}
  \label{subfig:blowup}
\end{subfigure}
\caption{Two reasons for blowup of numerical solutions: large initial data or large boundary data.}
\label{fig:blowup}
\end{figure}
\subsection{Solving the optimal control problem}
We now want to highlight, that our proposed optimal control problem is able to deal with 
this problem of blowup of solutions quite well. To this end, we slightly modify our problem formulation 
to use the objective function 
\begin{equation*}
  \tilde J(q,\uu) = \frac14 \|\uu-\uu_d\|_{L^4(I\times \Om)}^4 + \frac\alpha2 \|q - q_d\|_{L^2(I)}^2,
\end{equation*}
where $q_d \in L^2(I)^L$ is some desired control. 
\revD{Note that due to this change of structure, in the optimality conditions, only the projection
  formula \eqref{eq:qbar_projection_formula} has to be adapted, and the regularity of $\qbar$ does
  not solely depend on the regularity of $\zbar$, but also on $q_d$.
  If $q_d \in (C(\bar I) \cap H^{\frac12}(I))^L$,
  the same regularities of $\ubar$ and $\zbar$ as shown in \Cref{thm:improved_regularity} hold.
}
We then set $\uu_0 = 15\ww$ and $q_{d,1} = 50, q_{d,2} = 0$.
Further, we choose $q_a = (-\infty,-\infty)^T$ and $q_b = (+\infty,+\infty)^T$,
\revC{and set $\uu_d$ to a multiple of $\ww$.}
With these choices, we would like to achieve an optimal flow $\ubar$, which is still well defined for all 
$t \in I$, but is very close to the blowup regime.
Note that due to the choice of initial condition, and the results presented in the previous subsection,
we cannot initialize our control variable by $q_1 = q_2 \equiv 0$.
We thus choose $q_1^0 \equiv 0$ and $q_2^0 \equiv 50$, i.e., we apply an opposite pressure gradient, 
slowing down the instable initial condition.
The resulting optimal controls for this example for different choices of $\uu_d$ are reported in 
\Cref{fig:opt_control_result}.
Notice how the optimal controls mainly have three phases: at first, the initial flow is either 
abruptly stopped by applying an opposing pressure drop (\Cref{subfig:ud10})
or abruptly sped up (\Cref{subfig:ud20}) to match the target velocity $\uu_d$.
Afterwards, the flow is kept stably close to $\uu_d$, where an opposing pressure drop needs to be applied
in \Cref{subfig:ud20}. 
Only shortly before the end of the time interval, $\qbar_i$ tend towards $\qbar_{d,i}$, as a blowup of 
$\ubar$ after the end of $I$ is not prevented by our formulation.
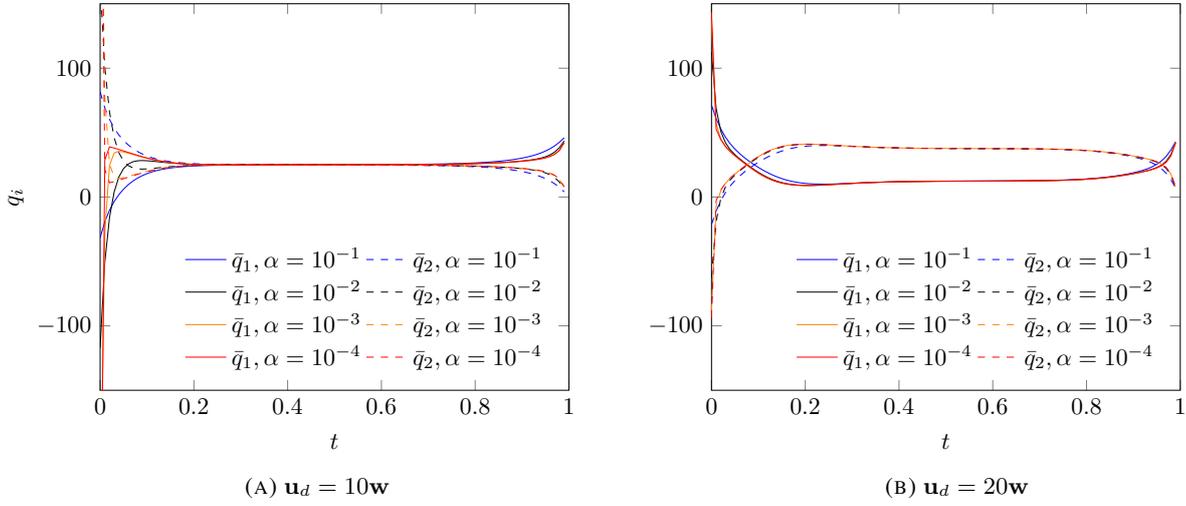
\begin{figure}[H]
  \begin{subfigure}{.49\linewidth}
  \begin{tikzpicture}[scale=.90]
    \begin{axis}[legend pos=south east,legend style={draw=none}, legend columns=2,
                  xmin=0, xmax=1, ylabel={$q_i$}, xlabel={$t$}, ymin=-150, ymax=150]
      \addplot[blue] table[x expr=\coordindex/100,y expr=\thisrow{Q0},col sep=space]{plots/optControl_ud10_alph_e-1.csv};
      \addplot[blue,dashed] table[x expr=\coordindex/100,y expr=\thisrow{Q1},col sep=space]{plots/optControl_ud10_alph_e-1.csv};
      \addplot[black] table[x expr=\coordindex/100,y expr=\thisrow{Q0},col sep=space]{plots/optControl_ud10_alph_e-2.csv};
      \addplot[black,dashed] table[x expr=\coordindex/100,y expr=\thisrow{Q1},col sep=space]{plots/optControl_ud10_alph_e-2.csv};
      \addplot[orange] table[x expr=\coordindex/100,y expr=\thisrow{Q0},col sep=space]{plots/optControl_ud10_alph_e-3.csv};
      \addplot[orange,dashed] table[x expr=\coordindex/100,y expr=\thisrow{Q1},col sep=space]{plots/optControl_ud10_alph_e-3.csv};
      \addplot[red] table[x expr=\coordindex/100,y expr=\thisrow{Q0},col sep=space]{plots/optControl_ud10.csv};
      \addplot[red,dashed] table[x expr=\coordindex/100,y expr=\thisrow{Q1},col sep=space]{plots/optControl_ud10.csv};
      \legend{{$\qbar_1,\alpha=10^{-1}$},{$\qbar_2,\alpha=10^{-1}$},{$\qbar_1,\alpha=10^{-2}$},{$\qbar_2,\alpha=10^{-2}$},{$\qbar_1,\alpha=10^{-3}$},{$\qbar_2,\alpha=10^{-3}$},{$\qbar_1,\alpha=10^{-4}$},{$\qbar_2,\alpha=10^{-4}$}}
    \end{axis}
  \end{tikzpicture}
    \caption{$\uu_d = 10\ww$}
    \label{subfig:ud10}
  \end{subfigure}
  \begin{subfigure}{.49\linewidth}
  \begin{tikzpicture}[scale=.90]
    \begin{axis}[legend pos=south east, legend style={draw=none},legend columns=2,
                  xmin=0, xmax=1, ymin=-150, ymax=150, xlabel={$t$}]
      \addplot[blue] table[x expr=\coordindex/100,y expr=\thisrow{Q0},col sep=space]{plots/opt_Control_ud20_alph_e-1.csv};
      \addplot[blue,dashed] table[x expr=\coordindex/100,y expr=\thisrow{Q1},col sep=space]{plots/opt_Control_ud20_alph_e-1.csv};
      \addplot[black] table[x expr=\coordindex/100,y expr=\thisrow{Q0},col sep=space]{plots/optControl_ud20_alph_e-2.csv};
      \addplot[black,dashed] table[x expr=\coordindex/100,y expr=\thisrow{Q1},col sep=space]{plots/optControl_ud20_alph_e-2.csv};
      \addplot[orange] table[x expr=\coordindex/100,y expr=\thisrow{Q0},col sep=space]{plots/optControl_ud20_alph_e-3.csv};
      \addplot[orange,dashed] table[x expr=\coordindex/100,y expr=\thisrow{Q1},col sep=space]{plots/optControl_ud20_alph_e-3.csv};
      \addplot[red] table[x expr=\coordindex/100,y expr=\thisrow{Q0},col sep=space]{plots/optControl_ud20.csv};
      \addplot[red,dashed] table[x expr=\coordindex/100,y expr=\thisrow{Q1},col sep=space]{plots/optControl_ud20.csv};
      \legend{{$\qbar_1,\alpha=10^{-1}$},{$\qbar_2,\alpha=10^{-1}$},{$\qbar_1,\alpha=10^{-2}$},{$\qbar_2,\alpha=10^{-2}$},{$\qbar_1,\alpha=10^{-3}$},{$\qbar_2,\alpha=10^{-3}$},{$\qbar_1,\alpha=10^{-4}$},{$\qbar_2,\alpha=10^{-4}$}}
    \end{axis}
  \end{tikzpicture}
  \caption{$\uu_d = 20\ww$}
  \label{subfig:ud20}
\end{subfigure}
\caption{Optimal Controls for different choices of $\uu_d$ and $\alpha$. 
}
\label{fig:opt_control_result}
\end{figure}
\subsection{Prevention of Blowup}
We would further like to highlight that our method is able to recover from near blowup.
\revC{By comparing with \Cref{subfig:blowup}, and manually tuning parameters,} we determined a function that
closely approximates the flowrate of the solution $\tilde \uu$
to the case $\uu_0 = \oo$ $q_1-q_2 \equiv 50$ presented in \Cref{subsect:blowup_state}.
We let the flowrate of $\uu_d$ follow this function until $t=0.82$, and set it constant afterwards.
Thus we set $\uu_d = \zeta(t) \ww$, where 
\begin{equation*}
  \zeta\colon [0,1] \to \R, \
  \zeta(t) = 5\left(\frac{1}{0.9-\min(t,0.82)}-\frac{1}{0.9}\right)+15\cdot\min(t,0.82).
\end{equation*}
We empirically determine the regularization parameter $\alpha = 10$, which compared to the previous examples
has to be chosen rather large, as the norm of the tracking type functional and thus the value of $J$ in 
this example is in the order of magnitude of $10^5$.
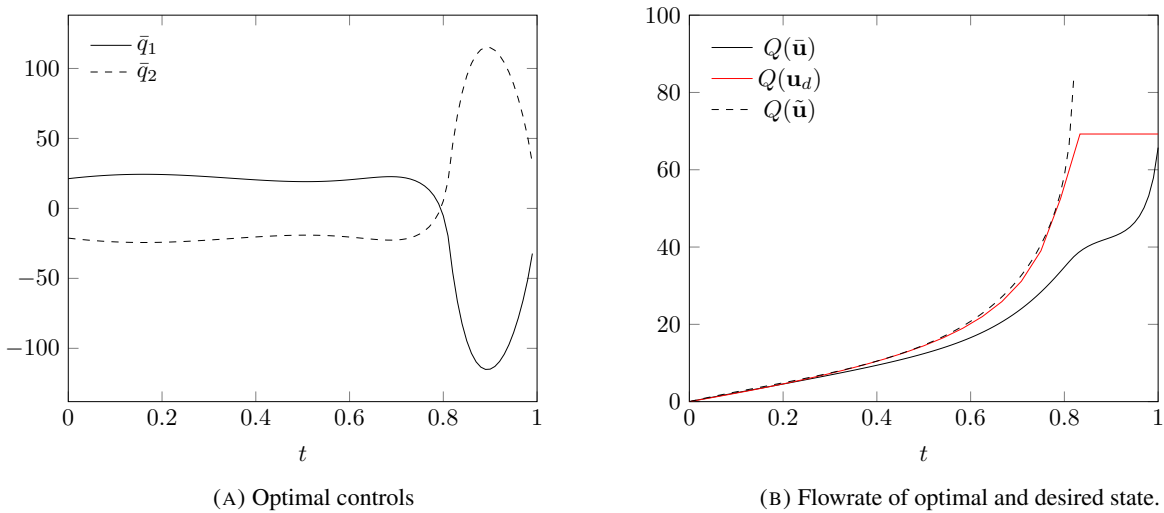
\begin{figure}[H]
  \begin{subfigure}{.49\linewidth}
  \begin{tikzpicture}[scale=.90]
    \begin{axis}[legend pos=north west, legend style={draw=none},legend columns=1,
                  xmin=0, xmax=1, xlabel={$t$}]
      \addplot[black] table[x expr=\coordindex/100,y expr=\thisrowno{0},col sep=space]{plots/control.00034.csv};
      \addplot[black,dashed] table[x expr=\coordindex/100,y expr=\thisrowno{1},col sep=space]{plots/control.00034.csv};
      \legend{{$\qbar_1$},{$\qbar_2$}}
    \end{axis}
  \end{tikzpicture}
  \caption{Optimal controls}
\end{subfigure}
  \begin{subfigure}{.5\linewidth}
  \begin{tikzpicture}[scale=.90]
    \begin{axis}[legend pos=north west, legend style={draw=none},
                  xmin=0, xmax=1, xlabel={$t$},ymin=0,ymax=100]
      \addplot[black] table[x expr=\thisrow{"Time"}/100,y="avg(U001)",col sep=comma]{plots/flowrate_blowup_alpha_10.csv};
      \addplot[red,domain=0:1] {5*(1/(.9-min(x,.82))-1/.9)+15*min(x,.82)};
      \addplot[black,dashed] table[x expr=\thisrow{"Time"}/100,y="avg(U001)",col sep=comma]{plots/Pdiff_blowup.csv};
      \legend{{$Q(\ubar)$},{$Q(\uu_d)$},{$Q(\tilde \uu)$}}
    \end{axis}
  \end{tikzpicture}
  \caption{Flowrate of optimal and desired state.}
\end{subfigure}
\caption{Results of an optimal control problem with data close to blowup.}
\end{figure}

\subsection{Bidirectional flow}
Lastly, we want to highlight, that since we did not rely on a boundary condition that assumes pure 
outflow, like \eqref{eq:DDN}, all open boundaries can either serve as inflow or outflow boundaries.
We thus set $\uu_d = 50\sin(2 \pi t) \ww$ and $\alpha=10^{-1}$.
The optimal control and the flowrates of $\uu_d$ as well as $\ubar$ can be taken from \Cref{fig:bidirectional}.
\begin{figure}[H]
  \begin{subfigure}{.49\linewidth}
  \begin{tikzpicture}[scale=1]
    \begin{axis}[legend pos=north west, legend style={draw=none},legend columns=1,
                  xmin=0, xmax=1, xlabel={$t$}]
      \addplot[black] table[x expr=\coordindex/100,y expr=\thisrowno{0},col sep=space]{plots/optControl_ud_sinus.csv};
      \addplot[black,dashed] table[x expr=\coordindex/100,y expr=\thisrowno{1},col sep=space]{plots/optControl_ud_sinus.csv};
      \legend{{$\qbar_1$},{$\qbar_2$}}
    \end{axis}
  \end{tikzpicture}
  \caption{Optimal controls}
\end{subfigure}
  \begin{subfigure}{.5\linewidth}
  \begin{tikzpicture}[]
    \begin{axis}[legend pos=north west, legend style={draw=none},
                  xmin=0, xmax=1, xlabel={$t$},ymin=-100,ymax=100]
      \addplot[black] table[x expr=\thisrow{"Time"}/100,y="avg(U001)",col sep=comma]{plots/flowrate_sinus_ud.csv};
      \addplot[red,domain=0:1] {50*sin(360*x)};
      \legend{{$Q(\ubar)$},{$Q(\uu_d)$}}
    \end{axis}
  \end{tikzpicture}
  \caption{Flowrate of the optimal vs desired state.}
\end{subfigure}
\caption{Solution of the optimal control problem with desired state that features a 
  sinusoidal dependence of time.}
\label{fig:bidirectional}
\end{figure}
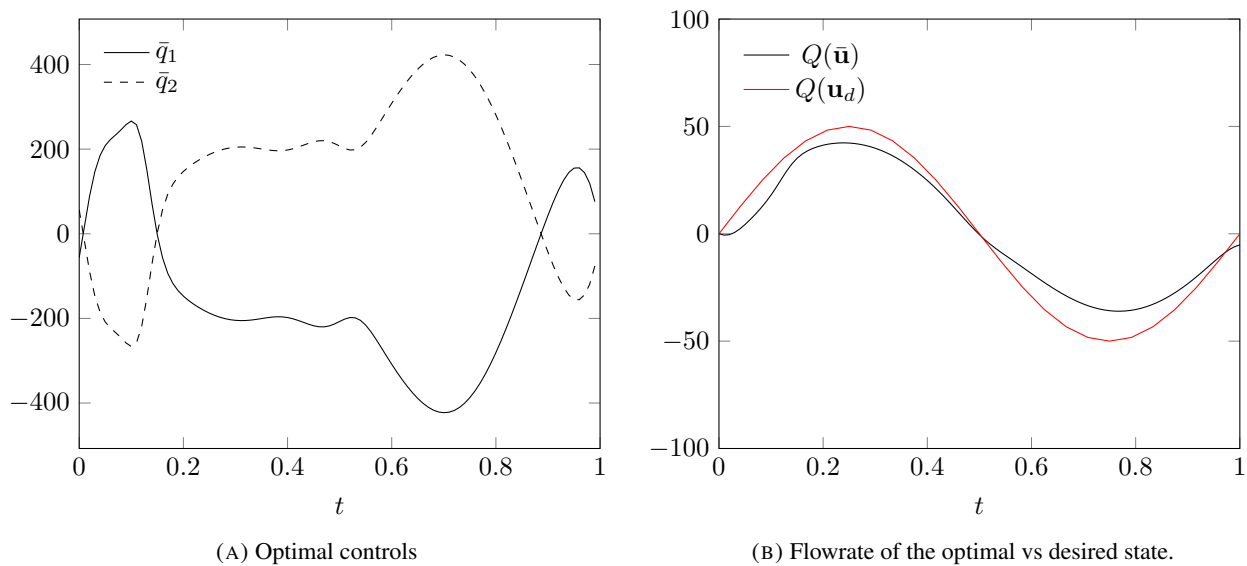
The examples presented above are chosen to highlight, that using \eqref{eq:DN} conditions does not only 
lead to issues in special domains but rather simple ones, and that our approach nonetheless is able to 
produce a control that compromises well between keeping the solution stable, but also steering them 
as close as possible to the desired data. 

\bibliographystyle{siam}
\bibliography{Quellen.bib}
\appendix
\appendix
\newpage

\section{Proof of \Cref{lemm:space_equivalence}}
\label{sect:app_proof_space_equivalence}
  We first discuss the $\Hspace$ spaces.
  \revA{As $\mathcal V \subset \Hspace$, $\Ht$ by definition is a closed subspace of $\Hspace$.}
  To show, that these spaces are actually equal, we show, that any linear functional on $\Hspace$, that vanishes on
  $\Ht$ also vanishes on $\Hspace$.
  Thus let $l \in \Hspace^*$ vanish on $\Ht$, i.e., it holds \revA{especially}
   \begin{equation*}
     \langle l, \vv \rangle = 0 \qquad \forall \vv \in \mathcal V.
   \end{equation*}
   As $l$ is a functional on $\Hspace \subset L^2(\Omega)^2$, we 
   can extend it to a functional on the whole $L^2(\Omega)^2$. Since it vanishes especially 
   for all testfunctions $\vv \in \mathcal V$, that are $\oo$ on the whole boundary,
   by \cite[Chapter I, Proposition 1.2]{Temam1977}, there exits $p \in H^1(\Omega)$, such that 
   \begin{equation*}
     \langle l, \vv\rangle =  \Oprod{\nabla p, \vv} \qquad \forall \vv \in L^2(\Omega)^2.
   \end{equation*}
   Hence it holds 
   \begin{equation*}
     0 = \Oprod{\nabla p, \vv} = - \Oprod{p,\nabla \cdot \vv} + \int_{\partial \Omega} p (\vv \cdot \nn) ds
     = \int_{\Gamma_N} p(\vv \cdot \nn) ds \qquad
     \forall \vv \in \mathcal V.
   \end{equation*}
   As so far, the representation of $l$ as $\nabla p$, only made use of testfunctions, that are $0$
   on the whole boundary,
   we now can show with the same arguments as in \cite{Heywood1996}, that in fact the pressure is 
   constant on $\Gamma_N$. For any two points $\xx_1 \neq \xx_2$, $\xx_{i} \in \Gamma_N$ let $\ttau_i$ and $\nn_i$ 
   for $i=1,2$
   denote a unit tangential vector and the outer normal vector on $\Gamma_N$. Fix $\varepsilon > 0$
   small enough, such that $\Sigma_i := \xx_i + (-\varepsilon,\varepsilon)\cdot \ttau_i \subset \Gamma_N$ $i=1,2$
   and $\Sigma_1 \cap \Sigma_2 = \emptyset$.
   Define $p_i: (-\varepsilon,\varepsilon) \to \R$, $p_i(t) = p(\xx_i + t \cdot \ttau_i)$, $i=1,2$.
   Now consider an arbitrary $\xi \in C^{\infty}_0 ((-\varepsilon,\varepsilon))$,
   and construct $v \in C^\infty(\bar \Omega)^2$ in such a way, that 
   \begin{equation*}
     \nabla \cdot \vv = 0 \ \text{in } \Omega, \quad \vv|_{\partial \Omega}(\xx) =
     \begin{cases}
       -\xi(t) \nn_1 & \text{if } \Sigma_1 \ni \xx = \xx_1 + t \ttau_1 \\
       \xi(t) \nn_2 & \text{if } \Sigma_2 \ni \xx = \xx_2 + t \ttau_2 \\
       0 & \text{else}.
     \end{cases}
   \end{equation*}
   For the proof of the existence of such a flux carrier, one can consider 
   \cite[Exercise III.3.5]{galdi_introduction_2011}.
   Thus one obtains
   \begin{equation*}
     0 = \int_{\Gamma_N} p(\vv \cdot \nn) ds 
     = \int_{\Sigma_1} p (\vv \cdot \nn) ds + \int_{\Sigma_2} p (\vv \cdot \nn) ds
     = \int_{-\epsilon}^\varepsilon p_1(t) \xi(t) dt - \int_{-\varepsilon}^\varepsilon p_2(t) \xi(t) dt
     = \int_{-\epsilon}^\varepsilon (p_1 - p_2)(t) \xi(t) dt
   \end{equation*}
   for all $\xi \in C^{\infty}_0 ((-\varepsilon,\varepsilon))$. Thus $p_1 = p_2$ on $(-\varepsilon,\varepsilon)$
   and thus $p|_{\Sigma_1} = p|_{\Sigma_2}$ (identifying the two boundary segments in the right orientation).
   As $\xx_1,\xx_2$ were arbitrary, this shows that $p|_{\Gamma_N} \equiv P$,
   for some constant $P$. If we define $\tilde p = p - P$, it holds $\tilde p \in H^1(\Omega)$ and
   $\tilde p|_{\Gamma_N} = 0$. Thus
   \begin{equation*}
     \langle l , \ww\rangle = (\nabla p,\ww) = (\nabla \tilde p,\ww) = - (\tilde p, \nabla \cdot \ww)
     + \int_{\Gamma_N} \tilde p (\ww\cdot \nn) ds + \int_{\Gamma_D} \tilde p (\ww \cdot \nn) ds = 0.
   \end{equation*}
     for all $\ww \in \Hspace$. Hence $l$ vanishes on the space $\Hspace$, proving $\Hspace = \Ht$.\\
     We now turn towards the spaces $\V$. Again it is easy to show that $\Vt \subset \V$. For the converse 
     direction, we need to show that every $\vv \in \V$ can be approximated by functions from $\mathcal V$.
     We sketch intermediate steps of a proof mentioned in 
     \cite[Remark 2.3]{nguyen_boundary_2015}, where the idea is to reduce the equivalence of spaces
     for the mixed boundary condition setting, to the one for pure Dirichlet boundary conditions.
     This requires a careful extension of the domain, i.e. definition of a domain 
     $\overline{\Omega^E} = \overline{\Omega} \cup \overline{\Omega^C}$.
     A sketch of such a domain can be seen in \Cref{fig:domain_extension}. Note that in order to 
     be able to extend any solenoidal function in $\V$ to a solenoidal function on the larger domain, 
     all Neumann boundaries have to be connected to each other.
     By \Cref{ass:domain}, each connected component of $\Gamma_{D,i}$ has a small neighborhood 
     $\mathcal U_i$, such that $\mathcal U_i \cap \Gamma_{D,j} = \emptyset$ for $j\neq i$.
     Each connected component of $\Gamma_{D,i}$ can be augmented to a closed, not-self intersecting 
     curve $\gamma_i$, such that $\gamma_i \subset \mathcal U_i$ and $\gamma_i \cup \bar \Omega = \Gamma_{D,i}$.
     Let us denote the domain enclosed by $\gamma_i$ by $\omega_i$.
     Define $R$ large enough, such that $\Omega \cup \bigcup_i \gamma_i \subset B_R(0)$.
     Then a possible extension is given by $\Omega^E := B_R(0) \backslash \bigcup_i \bar \omega_i$,
     and we define $\Omega^C := \interior(\Omega^E \backslash \Omega)$.
     We can split the boundary of $\Omega^C$ into three parts
     $\partial \Omega^C = \Gamma_N \cup \Gamma_C \cup \Gamma_B$, where $\Gamma_N$ is the original Neumann
     boundary of $\Omega$, $\Gamma^C = \bigcup (\gamma_i \backslash \Gamma_{D,i})$ and 
     $\Gamma_B = \partial B_R(0)$.
     On $\Omega^C$, we solve the following Stokes problem subject to pure Dirichlet conditions,
     see \cite[Theorem IV.1.1]{galdi_introduction_2011}
      \begin{equation*}
         \left\lbrace 
         \begin{aligned}
           -\Delta \ww + \nabla p_\ww &= \oo \quad \text{in } \Omega^C,\\
           \nabla \cdot \ww &= 0 \quad \text{in } \Omega^C,\\
           \ww &= \vv \quad \text{on } \Gamma_N,\\
           \ww &= \oo \quad \text{on } \Gamma_C \cup \Gamma_B.
         \end{aligned}
         \right.
      \end{equation*}
      Using integration by parts, and using that the traces of $\ww$ and $\vv$ coincide on $\Gamma_N$,
      it is straightforward to show, that the composite function
      \begin{equation*}
         \uu = \begin{cases}{}
            \vv & \xx \in \Omega\\
            \ww & \xx \in \Omega^C
         \end{cases}
      \end{equation*}
      is in $H^1_0(\Omega^E)^2$ and $\nabla \cdot \uu = 0$.
      As due to \cite[Chapter I, Theorem 1.6]{Temam1977}, there exists a sequence of functions
      $\{\uu_n\} \subset C^\infty_0(\Omega^E)$, such that $\nabla \cdot \uu_n = 0$ for all $n \in \N$ and 
      $\|\nabla (\uu-\uu_n)\|_{L^2(\Omega^E)} \xrightarrow{n\to \infty} 0$.
      This shows that $\|\nabla(\vv - \uu_n)|_{\Omega}\|_{L^2(\Omega)} \xrightarrow{n\to \infty} 0$,
      concluding the proof.

\begin{figure}[H]
  \centering
  \begin{tikzpicture}
  \node (image) at (0,0) {\includegraphics[width=7cm]{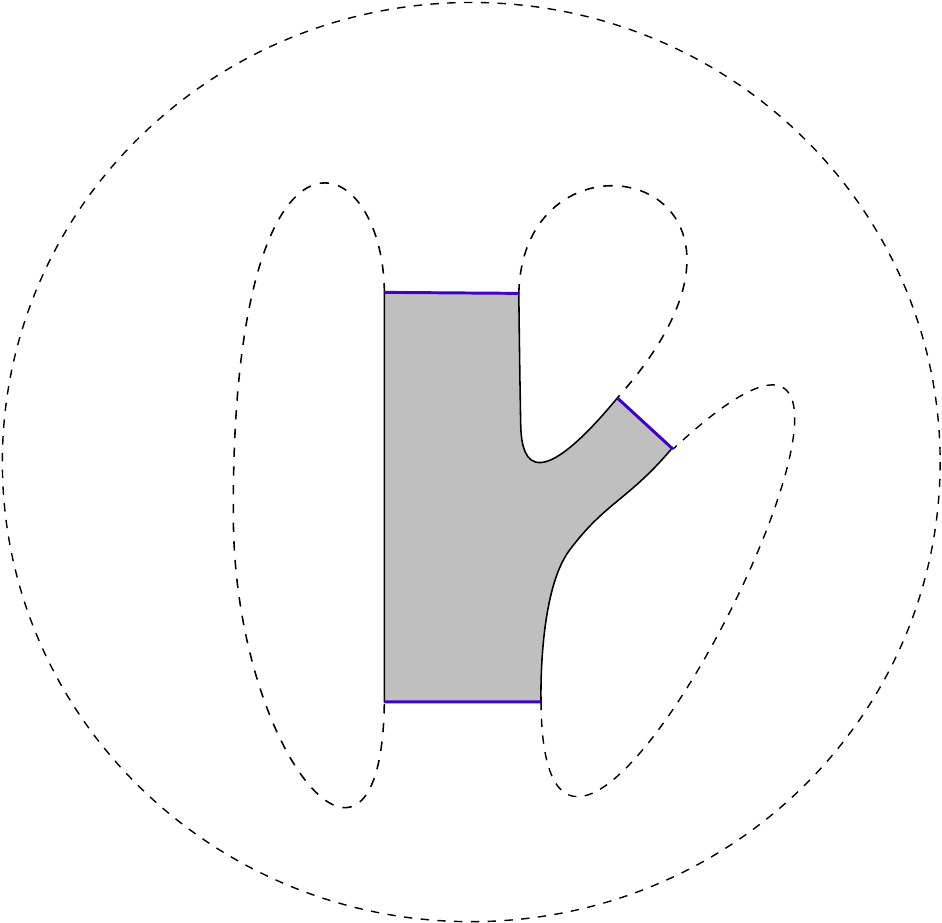}};
  \node at (-.1,-.2) {$\Omega$};
  \node[blue] at (-.1,-2.1) {\small $\Gamma_{N,1}$};
  \node[blue] at (-.1,1.5) {\small$\Gamma_{N,2}$};
  \node[blue] at (1.5,0.5) {\small $\Gamma_{N,3}$};
  \node at (-1.0,-.2) {\small $\Gamma_{D,1}$};
  \node at (1.1,-.9) {\small $\Gamma_{D,2}$};
  \node at (.75,.7) {\small $\Gamma_{D,3}$};
  \node[anchor=south west] at (-3.5,-3.0) {$B_R(0)$};
  \end{tikzpicture}
  \caption{Extension of $\Omega$, which admits an extension of divergence free vector fields
    from $\Omega$ to 
  the larger domain.}
  \label{fig:domain_extension}
\end{figure}

%
%

\end{document}